\theoremstyle{plain}
\newtheorem{theo}{Theorem}[section]
\newtheorem{lem}[theo]{Lemma}
\newtheorem{prop}[theo]{Proposition}
\newtheorem{cor}[theo]{Corollary}
\newtheorem{defi}[theo]{Definition}
\newtheorem{rem}[theo]{Remark}
\numberwithin{equation}{section}
\newcommand{\R}{\mathbb R}
\newcommand{\N}{\mathbb N}
\newcommand{\Heis}{\mathbb H}
\newcommand{\g}{\mathfrak g}
\newcommand{\e}{\epsilon}
\newcommand{\C}{\mathcal C}
\newcommand{\M}{\mathcal M}
\newcommand{\G}{\mathcal G}
\newcommand{\T}{\mathcal T}
\newcommand{\A}{\mathcal A}
\newcommand{\ie}{\textit{i.e.\ }}
\newcommand{\p}{\partial}
\newcommand{\loc}{\text{loc}}
\newcommand{\sign}{\text{sign}}
\newcommand{\sq}{1+|\nabla_\epsilon u|^2}
\newcommand{\tx}{\tilde X_1}
\newcommand{\ty}{\tilde X_2}
\newcommand{\ts}{\tilde X_3}
\newcommand{\Io}{{I_{{}_0}}}
\newcommand{\Jo}{{J_{{}_0}}}
\newcommand{\eo}{{e_{{}_0}}}
\newcommand{\Xo}{{X_{{}_0}}}
\newcommand{\Yo}{{Y_{{}_0}}}
\newcommand{\IIo}{{\mathcal{I}_{{}_0}}}
\newcommand{\No}{{N_{{}_0}}}
\newcommand{\Go}{{\Gamma_{{}_0}}}
\newcommand{\no}{{\nabla_{{}_0}}}
\newcommand{\de}{{d_{{}_0}}}
\newcommand{\Bo}{{B_{{}_0}}}
\newcommand{\ao}{{a_{{}_0}}}
\begin{document}

\title{Regularity of minimal intrinsic graphs in 3 dimensional sub-Riemannian structures of step 2}

\author[dm]{D. Barbieri\fnref{fn1}}
\ead{barbier@dm.unibo.it}

\author[dm]{G. Citti\fnref{fn1}}
\ead{citti@dm.unibo.it}

\fntext[fn1]{The authors have been partially supported by the European Projects GALA (FP6) and CG DICE (FP7).}

\address[dm]{Universit\`a di Bologna, Dipartimento di Matematica, p.zza di p.ta S.Donato 5, 40126 Bologna, Italy.}

\begin{abstract}
This work provides a characterization of the regularity of
noncharacteristic intrinsic minimal graphs for a class of vector fields
that includes non nilpotent Lie algebras as the one given by
Euclidean motions of the plane. The main result extends a previous
one on the Heisenberg group, using similar techniques to deal with
nonlinearities. This wider setting provides a better understanding
of geometric constraints, together with an extension of the
potentialities of specific tools as the lifting-freezing procedure
and interpolation inequalities. As a consequence of the regularity,
a foliation result for minimal graphs is obtained.

\vspace{10pt}\noindent
{\bf Resum\'e}

\vspace{6pt}\noindent
Ce travail fournit une caract\'erisation de la r\'egularit\'e des graphiques
intrins\`eques minimaux pour une classe de champs de vecteurs qui comprend
alg\`ebres de Lie non nilpotent comme celles obtenues dans le groupe des
mouvements rigides du plan Euclidien. Le r\'esultat principal \'etend un
pr\'ec\'edent r\'esultat dans le groupe d'Heisenberg, en utilisant des
techniques similaires pour traiter les non-lin\'earit\'es. Dans ce cadre plus
g\'en\'eral, on peut mieux comprendre les contraintes g\'eom\'etriques et les
potentialit\'es des outils sp\'ecifiques comme la proc\'edure de lifting-freezing
et les in\'egalit\'es d'interpolation. A la fin un r\'esultat de foliation
pour les graphiques minimaux est obtenu comme une cons\'equence de la r\'egularit\'e.

\end{abstract}

\begin{keyword}
minimal surfaces \sep vector fields \sep nonlinear degenerate differential equations \sep sub-Riemannian geometry.
\end{keyword}

\maketitle

\section{Introduction}

We prove a regularity result for noncharacteristic intrinsic
minimal graphs in a 3 dimensional sub-Riemannian
contact structure. Such a sub-Riemannian structure is defined by the
choice at every point of $\R^3$ of a couple of vector fields $X_1,
X_3$, which, together with their first order commutator
\begin{equation}\label{vectorfields}
X_2 = [X_1,X_3]
\end{equation}
span the tangent space at any point.
This condition is a special case of the well known  H\"ormander type
condition \cite{hormander} and ensures that it is possible to endow
the space with a sub-Riemannian metric (see \cite{Bellaiche}).
However we will see that low dimensional situation is particularly
interesting for our problem, since minimal graphs in 3 dimensional
Lie group satisfy a foliation result, which is not present in higher
dimension.

As it is usual, we will call horizontal tangent bundle the bundle
$\mathbb H$ spanned at every point by $X_1$ and $X_3$, and we will
define on it a metric $g_0$, by requiring that $\{X_1,X_{3}\}$ form
a $g_0-$orthonormal basis of $\mathbb H$. For each $\e>0$ we extend
$g_0$ to a Riemannian metric $g_\e$ by requiring that $\{X_1,X_{3},
X_{2}\}$ form a $g_\e-$orthonormal basis of $\mathbb H$.

The notion of regular surface in the
sub-Riemannian setting has been introduced by Franchi, Serapioni
and Serra Cassano, who (in \cite{FSSC}, \cite{FSSC4})  extended  to this
context the celebrated blow up technique introduced by De Giorgi.
They defined a regular surface $M$ as the zero level set of a
function $f$, whose intrinsic gradient $(X_1 f, X_3 f)$ never
vanishes. The horizontal tangent space to $M$, denoted by $H\M$, is
the fiber bundle whose fibers $H_p\M$ at any point $p$ are given by
the intersection between the Euclidean tangent space $\T_p\M$ and
the horizontal tangent space $\mathbb H_p$. The assumption of nonvanishing gradient on $f$
ensures that $H_p\M$ is spanned at
every point by a single vector field, called horizontal tangent
vector to $M$ at the point $p$. Hence a surface of this type is not
regular in the standard Euclidean sense and in general it will be a
fractal (see \cite{KS}). In particular, the standard implicit function result
does not hold in this case. However,
up to a suitable change of coordinates, we can assume that the
vector fields $X_i$ are represented as
$$
X_3 = \partial_{x_3} \, , \quad X_i = \sigma_i^j(x_1,x_2,x_3)\p_{x_j} \ \ i = 1, 2
$$
and in these coordinates the surface $M$ becomes an intrinsic graph of the form
\begin{equation}\label{intrinsicgraph} \M = \{x_3 = u(x_1,x_2);
(x_1,x_2) \in \Omega \subset \R^2\}\ .
\end{equation}
It has been proved in \cite{ASCV} and \cite{Implicit} that the function
$u$ is differentiable along the vector field $X_{1,u}$, projection
on $\T_\Omega$ of the horizontal tangent vector to $M$:
$$
X_{1,u} = X_1\big|_\M = \sigma_1^1(x_1,x_2,u(x_1,x_2))\p_{x_1} +
\sigma_1^2(x_1,x_2,u(x_1,x_2))\p_{x_2}\ .
$$
Besides, if the Lie derivative $X_{1,u}u$ is continuous, the function $u$ is
called of class $\C^1_u$. In the particular case of the Heisenberg group a
systematic study of properties of spaces $\C^1_u$ has been carried
out in \cite{FSSC3}, \cite{FSSC2}, \cite{fsc-bigolin}, \cite{Bigolinserracassano}.

\subsection{Minimal surfaces}

The first definition of minimal surfaces in this context is due to
Garofalo and Nhieu \cite{GN}, as first variation of the perimeter
functional. Pauls in \cite{pau:minimal}, studied the same notion as
the elliptic regularization of the sub-Riemannian metric. A
different, but equivalent notion was given by Cheng, Hwang, Malchiodi and Yang \cite{CHMY}
in general CR manifolds. In particular the specific case of three
dimensional pseudo-hermitian manifolds
has been outlined in \cite{Malchiodi}. Later, Danielli, Garofalo and Nhieu in \cite{dgn:minimal} extended these concepts with a general notion of horizontal mean curvature in sub-Riemannian setting. Ritor\'e and Rosales, in \cite{RR1} and \cite{RR2}, use a different approach
in order to introduce a notion of constant mean curvature under a volume constraint. We also refer to \cite{montefalcone},
\cite{Shcherbakova} for different notions of first variation of the
perimeter, and to \cite{cdpt:survey} for a survey of the problem.

The condition of zero horizontal mean curvature can be expressed as
follows in terms of the previously introduced vector field:
\begin{equation}\label{equation}
X_{1,u} \left(\frac{X_{1,u} u}{\sqrt{1+|X_{1,u}u|^2}}\right) = 0
\end{equation}
(see \cite{CCM1},  \cite{bascv},  \cite{dgn2}). Indeed equation (\ref{equation}) is the Euler-Lagrange equation for the sub-Riemannian area functional
$$
Lip(\Omega) \ni u \mapsto P_{X_1,X_3}(E_u,\Omega\times\R) = \int_\Omega \sqrt{1 + |X_{1,u}u|^2} dx_1dx_2
$$
where $E_u\doteq \{(x_1,x_2,x_3) \in \Omega\times\R \, : \, x_3 < u(x_1,x_2); (x_1,x_2) \in \Omega\}$ and $P_{X_1,X_3}(E_u,\Omega\times\R)$ denotes the sub-Riemannian perimeter measure of $E_u$ in $\Omega\times\R$ associated to the system of vector fields $X_1$ and $X_3 \equiv \partial_{x_3}$, according to the definition introduced in \cite{GN}.

It is also well
known (at least in the examples of the Heisenberg group and the
rototranslation group, see \cite{CHMY}, \cite{CS}, \cite{HP2008})
that for regular curves the mean curvature coincides with the
curvature of the projection of the Legendrian leaves on the
horizontal plane. However the same result is know for Lipschitz
continuous solution only in the Heisenberg case.

On the other side weaker definitions of minimal surfaces have been
proposed (see \cite{ASCV}, \cite{CHMY}, \cite{ChengHwangYang},
\cite{chy}, \cite{GN}, \cite{pau:obstructions}, \cite{pau:minimal}).
Explicit examples of nonregular surfaces (in the Euclidean sense)
have been provided by \cite{pau:obstructions}, \cite{pau:minimal},
by \cite{R} and in \cite{chy}. Hence the problem of regularity is
particular interesting, and by now only a few results are known in
the Heisenberg group \cite{CCM1}, \cite{fsc-bigolin}.

Here we plan to study regularity of Lipschitz continuous solutions
in general 3D structures. Of course we try to adapt to this
situation the typical instruments of nonlinear PDEs. The celebrated
works of H\"ormander \cite{hormander} and Rothschild and Stein
\cite{Roth:Stein} started the study of linear operators defined in
terms of vector fields, satisfying the bracket generating condition.
We refer to the monographs \cite{BLU} and
\cite{VSC} for an exhaustive presentation of the theory, which
however, can not be applied here for two main reasons. The first one
is that our vector fields are nonlinear, since they explicitly
depend on the solution. To our knowledge the only known results
regarding H\"ormander type nonlinear vector fields have been
obtained in \cite{CCMn}, \cite{Wheeden}, while studying curvature
equations in a completely different context. However not even these
more delicate results can be applied to our situation, since we are
dealing with only one vector field in a two dimensional space. Clearly one vector field can not generate the whole tangent space at any point, hence the
H\"ormander condition is violated at every point and because of this situation we can obtain a foliation result.

In order to prove regularity, we will follow
the approach introduced to handle the Levi curvature equation in \cite{CM} or the $\Heis^1$ curvature equation in
\cite{CCM1}. We introduce an elliptic regularization
\begin{equation}\label{elliptic}
L_{\epsilon,u} u = X_{i,u} \left(\frac{X_{i,u} u}{\sqrt{1+|\nabla_\epsilon u|^2}}\right) = 0
\end{equation}
where summation over repeated indices is intended. The regularization is performed adding the transverse direction scaled by a small parameter, so that we denote
$$
X_{2,u} = \epsilon \Big[\sigma_2^1(x_1,x_2,u(x_1,x_2))\p_{x_1} + \sigma_2^2(x_1,x_2,u(x_1,x_2))\p_{x_2}\Big] = \epsilon X_2\big|_{\M}
$$
and $\nabla_\epsilon$ stands for the gradient $(X_{1,u},X_{2,u})$.
The class of minimal graphs under study is then that of vanishing
viscosity solutions:
\begin{defi}\label{defvanishingviscosity}
A function $u \in Lip(\Omega)$ is said to be a vanishing viscosity solution of (\ref{equation}) if there exists a sequence $(\epsilon_n)$ in $\R^+$ with $lim_{n\to\infty}\epsilon_n = 0$ and a sequence $(u_n)$ in $\C^\infty$ such that
\begin{enumerate}
 \item $L_{\epsilon_n,u_n}u_n = 0$ in $\Omega$ for all $n$;
 \item the sequence $(u_n)$ is uniformly bounded in $Lip(\Omega)$, \ie there exists an $l > 0$ such that $\|u_n\|_{Lip(\Omega)}\leq l$, and converges to $u$ uniformly on compact subsets of $\Omega$.
\end{enumerate}
\end{defi}
We note that this approach will allow us, for all positive values of
$\epsilon$, to work with smooth solutions, due to the uniform
ellipticity of (\ref{elliptic}). This point will be of key
importance since it permits to skip passing through density of
smooth functions, which is not guaranteed in this general setting.

\subsection{Main results}
Throughout all the paper we will assume the solution $u$ to be
Euclidean Lipschitz, \ie we set
$$
Y_u = X_2\big|_{\M} = \frac{1}{\epsilon}X_{2,u}
$$
and assume that
\begin{equation}\label{assumption}
\|X_{1,u}u\|_{L^\infty(\Omega)} + \|Y_{u}u\|_{L^\infty(\Omega)} \leq M < \infty\ .
\end{equation}

In order to adopt the previously described approach, and hence work with smooth approximating functions, we will also require that the coefficients $\sigma_i^j$ be smooth, more precisely we will intend them as $\C^{\infty}$ functions.

Under this assumption we will prove the following result
\begin{theo}\label{theoCregularity}
Let $u$ be a vanishing viscosity solution of equation
(\ref{equation}). Then for all $k > 0$ and all $\alpha < 1$ it holds
\begin{equation}\label{Cregularity}
X_{1,u}^ku \in \C^\alpha_{loc}(\Omega)\ .
\end{equation}
\end{theo}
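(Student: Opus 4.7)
The plan is to obtain uniform a priori estimates on the smooth approximating sequence $u^\e$ from Definition \ref{defvanishingviscosity} and then pass to the limit $\e\to 0$. Set $v^\e_k := X_{1,u^\e}^k u^\e$ and, to begin with, $v = v^\e_1$. Differentiating the regularized equation (\ref{elliptic}) along $X_{1,u^\e}$ and exploiting the step-two commutator relation (\ref{vectorfields}) --- which survives in a modified form for the nonlinear frozen fields $X_{i,u^\e}$, with extra terms coming from the $u$-dependence of the coefficients --- one sees that $v$ satisfies a linear non-divergence equation
\begin{equation*}
a_{ij}^\e X_{i,u^\e} X_{j,u^\e} v \;+\; b_i^\e X_{i,u^\e} v \;=\; f^\e,
\end{equation*}
in which $(a_{ij}^\e)$ is uniformly $g_\e$-elliptic, while the lower-order data $b_i^\e,\,f^\e$ are polynomial in $\nabla_\e u^\e$ and in the first $X_{1,u^\e}$-derivatives of $u^\e$, hence $L^\infty$-bounded uniformly in $\e$ by assumption (\ref{assumption}).

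From this linear equation I would extract a Caccioppoli-type inequality by testing against $\eta^2 v$ for a Euclidean cutoff $\eta$: the $g_\e$-ellipticity of $(a_{ij}^\e)$ yields
\begin{equation*}
\int \eta^2\bigl(|X_{1,u^\e} v|^2 + |X_{2,u^\e} v|^2\bigr) \;\leq\; C,
\end{equation*}
with $C$ independent of $\e$. The coefficient of $|X_{2,u^\e} v|^2$ however carries the vanishing factor $\e^2$ when rewritten in terms of the $\e$-independent field $Y_{u^\e}$, so Caccioppoli alone does not control the transverse direction as $\e\to 0$. This is the main obstacle, and the remedy is the lifting--freezing procedure already used by the authors in the Heisenberg setting. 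At each base point $\bx\in\Omega$ the nonlinear vector fields $X_{i,u^\e}$ are frozen and lifted in one auxiliary variable, yielding H\"ormander fields of step two generating a free nilpotent Lie algebra; the associated Rothschild--Stein fundamental solution then provides, with constants \emph{uniform in $\e$}, a sub-Riemannian Sobolev embedding of the type $W^{1,p}_\e\hookrightarrow \C^\alpha$ for $p$ large and $\alpha<1$, together with an interpolation inequality
\begin{equation*}
\|X_{2,u^\e} v\|_{L^p} \;\leq\; C\bigl(\|X_{1,u^\e}^2 v\|_{L^p}^{1/2}\,\|v\|_{L^p}^{1/2} + \text{l.o.t.}\bigr),
\end{equation*}
which replaces the missing $\e$-independent transverse ellipticity by control of one extra $X_{1,u^\e}$-derivative.

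The theorem then follows by induction on $k$. Each $v^\e_{k+1}$ satisfies an equation of the same form as the one for $v$, with forcing polynomial in $v^\e_1,\dots,v^\e_k$ and their $X_{1,u^\e}$-derivatives; the Caccioppoli estimate together with the interpolation inequality above converts the uniform $L^\infty$ control of $v^\e_1,\dots,v^\e_{k+1}$ (inherited from the previous induction step and from (\ref{assumption})) into uniform $W^{1,p}_\e$ bounds on each $v^\e_k$, and the Sobolev embedding upgrades these to uniform $\C^\alpha$ bounds for every $k$ and every $\alpha<1$. Ascoli--Arzel\`a along the vanishing viscosity sequence then yields (\ref{Cregularity}). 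The delicate point that will require the most care is bookkeeping of the error terms generated at each step: every commutation of $X_{1,u^\e}$ with $X_{2,u^\e}$, and every freezing of coefficients, produces nonlinear correction terms depending on $u^\e$ and on its first derivatives, and these must be reabsorbed into the estimates using precisely the step-two structure and the uniform Lipschitz bound (\ref{assumption}).
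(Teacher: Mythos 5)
Your overall plan (differentiate the regularized equation, Caccioppoli estimates, compensate the missing transverse ellipticity by lifting--freezing, iterate uniformly in $\epsilon$, pass to the limit) is the strategy of the paper, but the two steps on which everything hinges are set up incorrectly. The first is the lifting: you freeze and lift to H\"ormander fields of \emph{step two} generating a free nilpotent algebra, in the spirit of Rothschild--Stein. The paper does the opposite: the lifted fields are $\tx = X_1+s^2Y$, $\ty=X_2$, $\ts=\partial_s$, deliberately chosen so that they do \emph{not} span at step two and satisfy the H\"ormander condition only at step three (homogeneous dimension $5$), and they are frozen into $\Xo_i$, not into a nilpotent model. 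This is not cosmetic. In the proof of Theorem \ref{THEOfreesing} three derivatives fall on the frozen fundamental solution, giving a kernel of order $\de_\epsilon^{-1}$ times the volume factor, and the resulting term is admissible only because the difference $X_i-\Xo_i$ is of order $\de_\epsilon^{1+\alpha}$ (Remark \ref{REMdiff}, which explicitly notes that with $s$ in place of $s^2$ this fails). With a step-two lifting of the form $X_1+sY$ the error is only $O(\de_\epsilon)$, the kernel exponent in Proposition \ref{PROPYoung} becomes $\kappa=0$, there is no gain of integrability, and the bootstrap from $W^{2,2}_\epsilon$ to $W^{2,10/3}_\epsilon$ and then to $W^{2,p}_\epsilon$ never starts.

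The second gap is the embedding you invoke, $W^{1,p}_\epsilon\hookrightarrow \C^\alpha$ with constants uniform in $\epsilon$: no such inequality is available here, precisely because in the limit only one horizontal field survives and the graph foliates, so no regularity transverse to the leaves can come from the equation; this is why the paper replaces intrinsic Sobolev inequalities by interpolation inequalities (Proposition \ref{PROPinterp1}, Proposition \ref{PROPinterp2}, Lemma \ref{LEMextra}). Likewise your proposed interpolation bound on $\|X_{2,u^\epsilon}v\|_{L^p}$ only controls $\epsilon\,Y v$, which carries no information as $\epsilon\to 0$. What is actually needed, and what Theorem \ref{THEOmainreg} provides, is a bound uniform in $\epsilon$ on \emph{both} $\|u\|_{W^{m,p}_\epsilon}$ and $\|Yu\|_{W^{m,p}_\epsilon}$, obtained by running the Caccioppoli/iteration machinery also on $v=Yu$ through the equation it satisfies (Corollary \ref{CORfv}); then the pointwise inequality $|\nabla_E h|\le C(|\nabla_\epsilon h|+|Yh|)$ turns these into Euclidean $W^{1,p}_E$ bounds on $X^2u$ (and on $X^ku$ for higher $m$), and it is the \emph{Euclidean} Sobolev--Morrey embedding in $\R^2$, not an intrinsic one, that yields (\ref{Cregularity}). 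Without uniform control of the $Y$-derivatives your intrinsic bounds cannot be upgraded to equicontinuity, so the concluding Ascoli--Arzel\`a step has nothing to work with; one also still has to identify the limit object, i.e.\ show $X^2u=0$ weakly and then pointwise as in Proposition \ref{PROPweakequation}.
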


The main regularity result, expressed by Theorem
\ref{theoCregularity}, provides $\C^\infty$ regularity along the
horizontal direction. This in particular implies that equation
(\ref{equation}) can be represented simply as $X_{1,u}^2u=0$
pointwise everywhere in $\Omega$. The geometric meaning is expressed
by Corollary \ref{CORfoliation}, which ensure that minimal graphs
are foliated into horizontal curves, along which they are smooth.

\begin{cor}\label{CORfoliation}
Let $x_3=u(x_1,x_2), (x_1,x_2) \in \Omega$ be a vanishing viscosity
minimal graph. Then the flow of the vector field $X_{1,u}$ yields a
foliation of the domain $\Omega$ by horizontal curves $\gamma$.
More precisely, let $(x_1,x_2)\in\Omega$ and denote by
$\gamma$ the integral curve of $X_{1,u}$ passing through it. Then $u$ is
differentiable at $(x_1,x_2)$ along $\gamma$ and equation
(\ref{equation}) reduces to
$$
\frac{d^2}{dt^2} u(\gamma(t)) = 0\ .
$$
\end{cor}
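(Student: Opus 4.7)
The plan is to read the corollary directly off Theorem~\ref{theoCregularity}, after two preliminary observations. First, $X_{1,u}$ has locally Lipschitz coefficients: the $\sigma_1^j$ are $\C^\infty$ and $u$ is Euclidean Lipschitz by assumption (\ref{assumption}), so each map $(x_1,x_2) \mapsto \sigma_1^j(x_1,x_2,u(x_1,x_2))$ is locally Lipschitz on $\Omega$. The Cauchy-Lipschitz theorem then produces, through every $(x_1,x_2)\in\Omega$, a unique maximal integral curve $\gamma$ of $X_{1,u}$, and this family provides the desired foliation. Second, since Theorem~\ref{theoCregularity} places $X_{1,u}u$ and $X_{1,u}^2 u$ in $\C^\alpha_{\loc}(\Omega)$, the chain rule can be applied pointwise in (\ref{equation}) to rewrite
\begin{equation*}
X_{1,u}\!\left(\frac{X_{1,u}u}{\sqrt{1+|X_{1,u}u|^2}}\right) = \frac{X_{1,u}^2 u}{(1+|X_{1,u}u|^2)^{3/2}}.
\end{equation*}
Thus the corollary reduces to establishing $X_{1,u}^2 u \equiv 0$ on $\Omega$.

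To obtain this identity I would pass to the limit in the approximating equation $L_{\epsilon_n,u_n}u_n = 0$. Expanding the divergence form and writing $X_{2,u_n} = \epsilon_n Y_{u_n}$ yields
\begin{equation*}
\frac{X_{1,u_n}^2 u_n}{(1+|\nabla_{\epsilon_n}u_n|^2)^{3/2}} + \epsilon_n^2\,\frac{R_n}{(1+|\nabla_{\epsilon_n}u_n|^2)^{3/2}} = 0,
\end{equation*}
where $R_n$ collects the contributions of $Y_{u_n}$ up to second order, all uniformly bounded thanks to (\ref{assumption}). The higher-order uniform H\"older estimates that lie behind Theorem~\ref{theoCregularity}, combined with Arzel\`a-Ascoli, upgrade the locally uniform convergence $u_n\to u$ to locally uniform convergence $X_{1,u_n}^k u_n \to X_{1,u}^k u$ for every $k$; since the transverse correction is $O(\epsilon_n^2)$ and the denominator is bounded below by $1$, passing to the limit gives $X_{1,u}^2 u = 0$ pointwise in $\Omega$.

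It remains to transport this identity onto $\gamma$. The continuity of $X_{1,u}u$ makes $u\circ\gamma$ of class $\C^{1,\alpha}$ with $(u\circ\gamma)'(t) = X_{1,u}u(\gamma(t))$, and the continuity of $X_{1,u}^2 u$ then yields $(u\circ\gamma)''(t) = X_{1,u}^2 u(\gamma(t)) = 0$, as claimed. The only nonroutine step is the limit passage at the second stage: one has to exploit the uniform H\"older bounds on $X_{1,u_n}^k u_n$ coming from the proof of Theorem~\ref{theoCregularity}, rather than the mere uniform convergence $u_n \to u$ granted by Definition~\ref{defvanishingviscosity}, and one has to check that the $\epsilon_n^2 R_n$ residue vanishes in the limit despite the nonlinear denominator -- which it does because $(1+|\nabla_{\epsilon_n}u_n|^2)^{3/2}\geq 1$.
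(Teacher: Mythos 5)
Your overall route coincides with the paper's: pass to the limit in the approximating equation to obtain $X_{1,u}^2u=0$ pointwise, then restrict to the integral curves of $X_{1,u}$. However, the final step glosses over the one genuinely nontrivial point of the corollary. You assert that ``the continuity of $X_{1,u}u$ makes $u\circ\gamma$ of class $\C^{1,\alpha}$ with $(u\circ\gamma)'(t)=X_{1,u}u(\gamma(t))$''. Here $X_{1,u}u$ is only a weak (Sobolev) derivative of the Lipschitz function $u$, and continuity of a weak derivative along a single vector field does not by itself yield classical differentiability of $u$ along \emph{every} integral curve of that field: one must prove that the weak derivative coincides with the Lie derivative in the sense of Definition \ref{deriv}. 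The paper does precisely this via Proposition \ref{lieweak} (quoting Proposition 5.2 of \cite{CM}), whose hypotheses also demand the transverse integrability $Y_uf\in L^p_{loc}(\Omega)$ with $p>1/\alpha$; it is applied first to $f=u$ and then to $f=X_{1,u}u$, the required bounds on $Y_uu$ and $Y_u(X_{1,u}u)$ coming from (\ref{assumption}) and Theorem \ref{theoXregularity}. Without such a lemma (or an equivalent argument, e.g.\ straightening the flow and exploiting the transverse $L^p$ control to pass from a.e.\ curves to every curve), your last two displayed identities are unsupported; this is the missing idea.

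The limit passage is also justified incorrectly, although its conclusion survives. The residue $R_n$ contains second-order transverse derivatives such as $Y_nX_{1,n}u_n$ and $Y_n^2u_n$, and these are \emph{not} controlled by (\ref{assumption}), which bounds only first derivatives. The uniform control comes from Theorem \ref{THEOmainreg} (equivalently Theorem \ref{theoXregularity}), and even there only $\epsilon_nY_n^2u_n$ is bounded in $L^p$ uniformly in $n$, so that $\epsilon_n^2Y_n^2u_n\to0$ in $L^p_{loc}$ rather than pointwise or uniformly. Likewise, your claimed locally uniform convergence $X_{1,u_n}^ku_n\to X_{1,u}^ku$ for every $k$ needs an identification of the limits, which the paper obtains more simply through weak $W^{1,2}_{E,\loc}$ convergence in Proposition \ref{PROPweakequation}. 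The clean statement is therefore: $X_{1,u}^2u=0$ holds weakly by Proposition \ref{PROPweakequation}, and then pointwise because $X_{1,u}^2u$ is continuous by Theorem \ref{theoCregularity}. With these two repairs your argument reproduces the paper's proof.
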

We remark that no regularity can be achieved in the transverse
direction, since this does not play a role in the intrinsic curvature
equation.

This result then extends to a very general setting results only
known in the special case of the Heisenberg group. We explicitly
note that the largest majority of the results known for intrinsic
graphs only apply to nilpotent or homogeneous groups, while here this
condition is not required. In this wider setting only the definition
of regular surface is known. Adapting the proof in this contest is
not purely an extension, but also provides a better understanding of
the role of geometric constraints in the present approach to
surfaces regularity, which in the end turns out not to rely on any
nilpotency assumption.

We explicitly note that results of this type are not expected in
higher dimension. In the specific case of the Heisenberg group, it
is indeed known that 2-dimensional minimal surfaces foliates in regular
curves, and are not regular as surfaces. In higher dimension the
surfaces are $\C^{\infty}$ in the Euclidean sense and there is no
foliation result.

\subsection{Sketch of the proof}

We first introduce Sobolev spaces associated to the vector fields
$X_{i,u}$: we say that a function $z$ belong to the Sobolev space $
W_\e^{1,p}$ if $$ z,\nabla_{\e} z\in L^p,$$ and in this case we set
$$\|z\|_{W_\e^{1,p}} =\|z\|_{L^p} + \|\nabla_{\e}z\|_{L^p}.$$
Higher order Sobolev spaces are defined in a similar way. We
explicitly note that this definition contains a linearization, since
we now apply the vector fields $X_{i,u}$ to any function $z$, not
necessarly to the function $u$.

Accordingly, in Section 3 we study the regularity properties of
solutions for such a linearization of equation (\ref{elliptic}).
This is done applying a Moser-like iteration procedure, and to this end
we have worked with the linear operator (with nonregular coefficients) that defines the equation satisfied by the
derivatives of the solution to the linearized equation, namely
$$
M_{\epsilon,u}z = X_{i,u} \left(\frac{a_{ij}(\nabla_\epsilon
u)}{\sqrt{1+|\nabla_\epsilon u|^2}}X_{j,u}z\right) = f
$$
where the coefficients $a_{ij}$ are regular and uniformly elliptic
(see also definition (\ref{coefficients})).

The Moser iteration technique has been extended for equation expressed
in terms of vector fields, which satisfy Sobolev embedding theorem.
This theorem is in general true for H\"ormander vector fields, with
regular or nonregular coefficients (see e.g. \cite{J}, \cite{CDG}, \cite{HK} and references therein).
However we can not hope to prove a Sobolev inequality uniform in $\e$, since the
equation does not naturally involve derivatives in the transverse
direction: indeed the intrinsic geometry of these minimal graphs will
induce a Legendrian foliation, so one cannot expect to gain Sobolev
regularity when passing from one fiber of the foliation to the
other. Hence following {\cite{CMa} and \cite{CCM1} we replace these
type of inequalities with an interpolation inequality, Proposition
\ref{PROPinterp1}. Another crucial ingredient in the Moser method
is a  Caccioppoli type inequality. This is then obtained in Lemma
\ref{LEMcaccioppolisecond}. Combining these two instruments with a
bootstrap argument, we obtain estimates for a solution of the linear
equation in terms of the coefficients of the equation itself in Theorem \ref{THEOiterations}.

Unfortunately the regularity for the coefficients required by the cited theorem is not
available under assumption (\ref{assumption}). For this reason we prove in section 4 a
rather delicate Sobolev type estimate to start the iteration. This estimate will not be
optimal, since for the previously recalled reason we can not hope to obtain an
intrinsic Sobolev inequality with optimal exponent. Moreover,
we will need assumptions both on the intrinsic derivatives $X_{1,u}u$ and on the
transverse derivatives in direction $Y_u$. The
main idea is to approximate, through a freezing and a lifting
procedure, the vector fields $X_{i,u}$ with H\"ormander type vector
fields of step three. We will choose the approximating vector fields
in such a way that they do not span the space at step 2, so that
the second derivatives along the approximating operator provide a
good approximation of the corresponding derivatives along the
vectors $X_{i,u}$. On the other side, since the approximating vector
fields satisfy the H\"ormander condition, we will be able to establish
the required Sobolev inequality. This will be done in Theorem \ref{THEOfreesing}. We
explicitly note that this freezing and lifting method is only
vaguely reminiscent of the celebrated method of Rothschild, and
Stein \cite{Roth:Stein}. Indeed they approximate H\"ormander
operators with nilpotent operators with the same bracket generating
property. Here on the contrary the novelty of the idea is to adapt
this method to non-H\"ormander vectors fields.

Finally in section 5 we obtain the regularity in the Sobolev spaces
for viscosity solutions of the nonlinear equation (\ref{equation}),
concluding the proof of Theorem \ref{theoCregularity}.

\section{Notations and technical facts}

\subsection{Significative examples of such structures}

This setting include all three dimensional simply connected Lie
group $\G$ whose Lie algebra $\g$ can be generated by two vector
fields. The simplest noncommutative Lie algebra with two generator
is the three dimensional Heisenberg group, whose left invariant
vector fields over $\R^3$ read
\begin{displaymath}
X_1 = \partial_{x_1} + x_3 \partial_{x_2} , \ X_2=
\partial_{x_2}, \ X_3 =
\partial_{x_3}.
\end{displaymath}

A relevant example of non nilpotent Lie group is given by the group
$E(2)$ of translations and rotations of the Euclidean plane. In this
case the vectors $X_1$ and $X_2$ express translations in two
orthogonal directions of $\R^2$ while $X_3$ describes a rotation.
Hence they can be expressed as vector fields over $\R^2_{x_1, x_2}
\times S^1_{x_3}$ with components
\begin{displaymath}
X_1 = \cos(x_3) \partial_{x_1} + \sin(x_3) \partial_{x_2}, \ X_2=
-\sin(x_3) \partial_{x_1} + \cos(x_3) \partial_{x_2}, \ X_3 =
\partial_{x_3}.
\end{displaymath}

And the bracket relations are the following:
\begin{displaymath}
[X_1,X_3] = - X_2 \ , \quad [X_1,X_2] = 0 \ , \quad [X_2,X_3] = X_1
\ .
\end{displaymath}
This case is particularly important since the related minimal
surfaces provide a concrete model for the geometric completion
operated by the primary visual cortex, as introduced in \cite{CS}.
Indeed an interesting perspective on minimal surfaces in this
setting was recently obtained in \cite{HP2009}.

Other notable Lie group included in the present investigation are
$SO(3)$, or $SL(2)$, but we remark that the group structure is not
an essential point in defining the setting.

\subsection{Linear vector fields in the sub-Riemannian space}
Let us first recall some properties of the linear vector fields
$X_i$, defined in (\ref{vectorfields}) on $R^3$. In order to obtain
the asserted representation of the vector field $X_3$ as a partial
derivative, we need to introduce polarized exponential coordinates.

Indeed  to any pair of points $(x,y,z),
(\overline{x},\overline{y},\overline{z}) \in \R^3$ sufficiently
close we can associate coordinates $(x_1,x_2,x_3) \in \R^3$ such
that
\begin{equation}\label{polarized}
(x,y,z) = \exp(x_3X_3) \exp(x_1X_1 +
x_2X_2)(\overline{x},\overline{y},\overline{z}).
\end{equation}
Relation (\ref{polarized}) simply means that if we take two curves
$\gamma_1,\gamma_2$ in $\R^3$ such that
$$
\gamma_1(0) = (\overline{x},\overline{y},\overline{z}) \ \ , \
\dot{\gamma}_1(t)=x_1X_1(\gamma_1(t)) + x_2X_2(\gamma_1(t))
$$
and
$$
\gamma_2(0) = \gamma_1(1) \ \ , \
\dot{\gamma}_2(t)=x_3X_3(\gamma_2(t))
$$
then, due to the span property of the vector fields, we can always
choose $(x_1,x_2,x_3)$ such that $(x,y,z) = \gamma_2(1)$.

In these coordinates, by a structure result given by
\cite[Lemma 3.1]{Implicit}, we have
\begin{rem}\label{changecoord}
The vector field $X_3$ is expressed in the new coordinates as
$$
X_3 = \p_{x_3}
$$
while for $X_1, X_2$ we can write
\begin{eqnarray*}
X_1 & = & \sigma_1^1(x_1,x_2,x_3)\p_{x_1} + \sigma_1^2(x_1,x_2,x_3)\p_{x_2} + \sigma_1^3(x_1,x_2,x_3)\p_{x_3}\\
X_2 & = & [X_3,X_1] = \sigma_2^1(x_1,x_2,x_3)\p_{x_1} +
\sigma_2^2(x_1,x_2,x_3)\p_{x_2} +  \sigma_2^3(x_1,x_2,x_3)\p_{x_3}
\end{eqnarray*}
where condition (\ref{vectorfields}) sets the $\sigma_2^i$'s
to $\sigma_2^i (x_1,x_2,x_3) = \p_{x_3}\sigma_1^i (x_1,x_2,x_3)$.
\end{rem}

The span condition for the vector fields can be written as
$$
\textrm{rank}\left(
\begin{array}{cc}
\sigma_1^1 & \sigma_1^2\\
\p_{x_3}\sigma_1^1 & \p_{x_3}\sigma_1^2
\end{array}\right) = 2
$$
\ie we assume that
\begin{equation}\label{rank}
\Big|\sigma_1^1(x_1,x_2,x_3)\p_{x_3}\sigma_1^2(x_1,x_2,x_3) -
\sigma_1^2(x_1,x_2,x_3)\p_{x_3}\sigma_1^1(x_1,x_2,x_3)\Big| > 0
\end{equation}
for all $(x_1,x_2,x_3) \in \G$.\\
We remark that we do not assume nilpotency, so that
\begin{eqnarray*}
\ [X_2,X_3] & = & c^1_{2,3} X_1 + c^2_{2,3} X_2 + c^3_{2,3} X_3\\
\ [X_1,X_2] & = & c^1_{1,2} X_1 + c^2_{1,2} X_2 + c^3_{2,3} X_3
\end{eqnarray*}
where the structure coefficients $c^i_{j,k}(x_1,x_2,x_3)$ can well
be nonzero.

\subsection{Projected vector fields in $\R^2$}
For what concerns the nonlinear vector fields $X_{1,u}, X_{2,u}$, their formal
adjoints are given by
\begin{equation}\label{adjoints}
X_{i,u}^\dag = - X_{i,u} - m_i(x_1,x_2)
\end{equation}
where
\begin{eqnarray*}
m_1(x_1,x_2) & = & \p_{x_1}\sigma_1^1(x_1,x_2,u(x_1,x_2)) + \p_{x_2}\sigma_1^2(x_1,x_2,u(x_1,x_2))\\
& = & \p_i \sigma_1^i(x_1,x_2,u(x_1,x_2)) + \p_3 \sigma_1^i(x_1,x_2,u(x_1,x_2)) \p_{x_i}u(x_1,x_2)\\
& = & \p_i \sigma_1^i + [X_3,X_1]\big|_\M u = \p_i \sigma_1^i + \frac{1}{\epsilon}X_{2,u}u\\
m_2(x_1,x_2) & = & \epsilon \left(\p_{x_1}\sigma_2^1(x_1,x_2,u(x_1,x_2)) + \p_{x_2}\sigma_2^2(x_1,x_2,u(x_1,x_2))\right)\\
& = & \epsilon\left(\p_i \sigma_2^i(x_1,x_2,u(x_1,x_2)) + \p_3 \sigma_2^i(x_1,x_2,u(x_1,x_2)) \p_{x_i}u(x_1,x_2)\right)\\
& = & \epsilon\left(\p_i \sigma_2^i + [X_3,X_2]\big|_\M u\right) =
\epsilon\p_i \sigma_2^i - \epsilon c^1_{2,3}X_{1,u}u -
c^2_{2,3}X_{2,u}u
\end{eqnarray*}
where $c^i_{jk}$ stands now and in what follows for
$c^i_{jk}(x_1,x_2,u(x_1,x_2))$ and we note that, in agreement with
the choice of $X_{2,u} = \epsilon X_2\big|_\M$, only $m_2$ is of
global order $\epsilon$.

Moreover, their commutator can be written explicitly:
\begin{eqnarray*}
[X_{1,u},X_{2,u}] & = & \epsilon[\sigma_1^j \p_{x_j}, \sigma_2^k \p_{x_k}] = \epsilon\left(\sigma_1^j (\p_{x_j}\sigma_2^k)\p_{x_k} - \sigma_2^k(\p_{x_k}\sigma_1^j)\p_{x_j}\right)\\
& = & \epsilon\left(\sigma_1^j (\p_{j}\sigma_2^k)\p_{x_k} - \sigma_2^k(\p_{k}\sigma_1^j)\p_{x_j}\right)\\
&& + \ \epsilon\sigma_1^j (\p_3 \sigma_2^k \p_{x_j} u)\p_{x_k} - \epsilon\sigma_2^k(\p_3 \sigma_1^j \p_{x_k}u)\p_{x_j}\\
& = & \epsilon[X_1,X_2]\big|_{\M} + \epsilon\left(X_{1,u}u\right)[X_3,X_2]\big|_{\M} - \left(X_{2,u}u\right)X_2\big|_{\M}\\
& = & \omega^i(x_1,x_2)X_{i,u}
\end{eqnarray*}
where
\begin{displaymath}
\begin{array}{rcl}
\omega^1(x_1,x_2) & = & \epsilon\left(c^1_{1,2} - \left(X_{1,u}u\right)c^1_{2,3}\right)\vspace{0.2cm}\\
\omega^2(x_1,x_2) & = & c^2_{1,2} - \left(X_{1,u}u\right)c^2_{2,3} -
\frac{1}{\epsilon}\left(X_{2,u}u\right)\ .
\end{array}
\end{displaymath}
By $\omega^i_{j,k}$ we will indicate the corresponding commutator
coefficients, antisymmetric with respect to lower indices, \ie
$\omega^i_{j,k} = - \omega^i_{k,j}$ and
\begin{equation}\label{structure}
\omega^i_{j,k}(x_1,x_2) = \left\{
\begin{array}{ll}
0 & \textrm{if} \ j = k\\
\omega^i(x_1,x_2) & \textrm{if} \ j = 1, k = 2
\end{array}\right.
\end{equation}
so that in general we can write
$$
[X_{j,u},X_{k,u}] = \omega^i_{j,k} X_{i,u}
$$
and we note that this commutator is of global order $\epsilon$, in
agreement with the choice of scaling $X_{2,u}$ with $\epsilon$.

This implies in particular that, under hypothesis (\ref{assumption}), there exists a finite po\-si\-tive constant $C_M$
depending only on $M$ and $\Omega$ such that
\begin{equation}\label{assCM}
\|m_1\|_{L^\infty(\Omega)} +
\frac{1}{\epsilon}\|m_2\|_{L^\infty(\Omega)} +
\frac{1}{\epsilon}\|\omega^1\|_{L^\infty(\Omega)} +
\|\omega^2\|_{L^\infty(\Omega)} \leq C_M < \infty\ .
\end{equation}

\section{A priori estimates I: Caccioppoli inequalities}
In this section we provide a priori Sobolev estimates for solutions $u$ to equation (\ref{elliptic}). The adopted procedure relies on the equations satisfied by the derivatives $z_1=X_{k,u}u$, $z_2=X_{l,u}X_{k,u}u$ and $z_Y=Y_u u$ of such solutions: these equations will be treated as linear equations with respect to $z$, considering $u$ as part of the coefficients of the differential operator that defines the equations. The first two lemmata of the section show that they take the following divergence form
\begin{equation}\label{divergence}
M_{\epsilon,u}z = X_{i,u} \left(\frac{a_{ij}(\nabla_\epsilon u)}{\sqrt{1+|\nabla_\epsilon u|^2}}X_{j,u}z\right) = f
\end{equation}
where the precise formulations of the nonhomogeneous term will be given and the coefficients $a_{ij}$ are
\begin{equation}\label{coefficients}
a_{ij}(\nu) = \delta_{ij} - \frac{\nu_i\nu_j}{1+|\nu|^2}
\end{equation}
and we note at they are symmetric, uniformly bounded and positive definite, \ie
\begin{equation}\label{positive}
\frac{1}{1+|\nu|^2}|\xi|^2 \leq a_{ij}(\nu) \xi_i \xi_j \leq |\xi|^2 \quad \forall \ \xi \in \R^2 \ , \ \forall \ \nu \in \R^2\ .
\end{equation}
The final result will be the following

\begin{theo}\label{THEOiterations}
Let $u$ be a function satisfying (\ref{assumption}). Let $z$ be a smooth solution of equation (\ref{divergence}) and $f$ be a given $\C^{\infty}(\Omega)$ function, then for any $p \geq 3$ there exists a positive constant $C$ depending only on $p,\Omega$ and the constant $M$ of (\ref{assumption}) but independent of $\epsilon$ such that for any $m\geq 1$ it holds
\begin{eqnarray*}
i) & \displaystyle{\|z\|_{W_\epsilon^{m+1,p+1/2}(\Omega)}^{p+1/2}} & \leq \ C\left(1 + \|z\|_{W_\epsilon^{m,4p+2}(\Omega)}^{4p+2} + \|u\|_{W_\epsilon^{m+1,(4p+2)/3}(\Omega)}^{(4p+2)/3}\right.\\
&& \!\!\!\!\!\!\!\!\!\!\!\!\!\!\!\!\!\!\!\!\!\!\!\!\!\!\!\!\!\!\!\!\!\!\! + \left. \|u\|_{W_\epsilon^{m+2,(2p+1)/3}(\Omega)}^{(2p+1)/3} + \|Yu\|_{W_\epsilon^{m,(2p+1)/3}(\Omega)}^{(2p+1)/3} + \|f\|_{W_\epsilon^{m,(4p+2)/7}(\Omega)}^{(4p+2)/7}\right)\\
ii) & \displaystyle{\| |\nabla_\epsilon^{m+1} z|^{(p-1)/2} \|_{W_\epsilon^{1,2}(\Omega)}^{2}} & \leq \ C\left(1 + \|z\|_{W_\epsilon^{m,4p+2}(\Omega)}^{4p+2} + \|u\|_{W_\epsilon^{m+1,(4p+2)/3}(\Omega)}^{(4p+2)/3}\right.\\
&& \!\!\!\!\!\!\!\!\!\!\!\!\!\!\!\!\!\!\!\!\!\!\!\!\!\!\!\!\!\!\!\!\!\!\! + \left. \|u\|_{W_\epsilon^{m+2,(2p+1)/3}(\Omega)}^{(2p+1)/3} + \|Yu\|_{W_\epsilon^{m,(2p+1)/3}(\Omega)}^{(2p+1)/3} + \|f\|_{W_\epsilon^{m,(4p+2)/7}(\Omega)}^{(4p+2)/7}\right)
\end{eqnarray*}
\end{theo}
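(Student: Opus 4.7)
The plan is to prove both (i) and (ii) by a single Caccioppoli–plus–interpolation argument applied not to $z$ itself, but to an $m$-th order intrinsic derivative $v = X_{i_1,u}\cdots X_{i_m,u}z$. The first step is to derive the equation satisfied by each such $v$: commuting the vector fields $X_{i_k,u}$ one at a time through $M_{\epsilon,u}$ and using the commutator identity $[X_{j,u},X_{k,u}]=\omega^i_{j,k}X_{i,u}$ from Section 2, I obtain a divergence-form equation
$$
M_{\epsilon,u} v \;=\; \tilde f,
$$
where $\tilde f$ collects $\nabla_\epsilon^m f$, commutator terms weighted by the structure coefficients $\omega^i_{j,k}$ of (\ref{structure}), and terms generated by differentiating the nonlinear coefficient $a_{ij}(\nabla_\epsilon u)/\sqrt{1+|\nabla_\epsilon u|^2}$ (which pull down factors of $\nabla_\epsilon u$ up to order $m+1$). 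The fact that $\omega^2$ contains $\tfrac1\epsilon X_{2,u}u=Y_u u$ is precisely what forces the norm $\|Yu\|_{W_\epsilon^{m,(2p+1)/3}}$ to appear on the right-hand side.

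The second step is to apply the Caccioppoli inequality of Lemma \ref{LEMcaccioppolisecond} to the equation for $v$, testing against $v\,|v|^{p-3}\eta^2$ for a smooth cut-off $\eta$. Using the uniform ellipticity (\ref{positive}) of $a_{ij}$ together with the $L^\infty$ bound (\ref{assCM}) on the lower-order coefficients of $M_{\epsilon,u}$, this yields
$$
\int |\nabla_\epsilon v|^{2}|v|^{p-3}\eta^{2}\,\lesssim\,\int \tilde f\, v|v|^{p-3}\eta^{2}\;+\;\text{cut-off contributions},
$$
whose left-hand side is, up to constants, $\|\nabla_\epsilon(|v|^{(p-1)/2})\|_{L^2}^2$. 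Splitting the terms in $\tilde f$ by Hölder and Young with exponents tuned to the norms on the right of the statement, and absorbing back the leading gradient on the left, produces exactly bound (ii); the $L^2$ part of the $W_\epsilon^{1,2}$-norm is controlled by the $\|z\|_{W_\epsilon^{m,4p+2}}$ term, since $v$ is already an order-$m$ derivative of $z$.

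For (i) I would then invoke the interpolation inequality of Proposition \ref{PROPinterp1}, which replaces the intrinsic Sobolev embedding that is not available uniformly in $\epsilon$, to pass from the $W_\epsilon^{1,2}$-bound on $|v|^{(p-1)/2}$ to an $L^{p+1/2}$-bound on $\nabla_\epsilon v = \nabla_\epsilon^{m+1}z$. The $L^{p+1/2}$-norm of the lower derivatives needed to assemble the full $W_\epsilon^{m+1,p+1/2}$-norm is again absorbed into the $\|z\|_{W_\epsilon^{m,4p+2}}$ term, which dominates on a bounded domain. Together these produce estimate (i) from (ii).

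The main obstacle is the bookkeeping of $\tilde f$ and the matching of exponents in Young's inequality. Each commutator contribution is a polynomial in $\nabla_\epsilon^{\leq m+2}u$, $\nabla_\epsilon^{\leq m}(Y_u u)$ and $\nabla_\epsilon^{\leq m}z$ multiplied by a bounded nonlinear factor of $\nabla_\epsilon u$; one must split every product so that the highest-order $\nabla_\epsilon v$ factor can be absorbed into the Caccioppoli term while the remaining factors fall exactly into the norms $W_\epsilon^{m+1,(4p+2)/3}$ and $W_\epsilon^{m+2,(2p+1)/3}$ of $u$, $W_\epsilon^{m,(2p+1)/3}$ of $Yu$, and $W_\epsilon^{m,(4p+2)/7}$ of $f$ displayed in the theorem. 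Equally delicate is the $\epsilon$-homogeneity: the $1/\epsilon$ factors in $m_2$ and $\omega^1$ must cancel against the explicit $\epsilon$ hidden in $X_{2,u}=\epsilon\, Y_u$, as recorded in (\ref{assCM}), so that the final constant $C$ depends only on $p$, $\Omega$ and $M$ as claimed.
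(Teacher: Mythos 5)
Your overall strategy --- differentiate the equation $m$ times, view the result as a divergence--form equation $M_{\epsilon,u}v=\tilde f$ for $v=\nabla_\epsilon^m z$, and combine a Caccioppoli estimate with the interpolation inequality of Proposition \ref{PROPinterp1}, tuning Young exponents so that top--order pieces are absorbed --- is the same as the paper's, which obtains the theorem by iterating Theorem \ref{THEOCaccioppoli1} and estimating the new source $\rho$ exactly as you describe. However, the central analytic step as you write it does not close. The inequality you display, obtained by testing the equation for $v$ against $v|v|^{p-3}\eta^{2}$, is the \emph{first--order} Caccioppoli estimate (Lemma \ref{LEMcaccioppolifirst}): it controls $\nabla_\epsilon\bigl(|v|^{(p-1)/2}\bigr)=\nabla_\epsilon\bigl(|\nabla_\epsilon^{m}z|^{(p-1)/2}\bigr)$, which is one derivative short of the quantity $\nabla_\epsilon\bigl(|\nabla_\epsilon^{m+1}z|^{(p-1)/2}\bigr)$ in (ii), and is also not the error term $\nabla_\epsilon\bigl(|X_iv|^{(p-1)/2}\bigr)$ that Proposition \ref{PROPinterp1} requires in order to produce the $L^{p+1/2}$ bound on $\nabla_\epsilon^{m+1}z$ in (i). To reach those quantities you must differentiate the equation for $v$ once more (Lemma \ref{LEMsecondderivatives}) and, when estimating the resulting source, integrate the term containing $X_k\tilde f$ by parts as in the proof of Lemma \ref{LEMcaccioppolisecond}; this is the ``very delicate estimate'' the paper flags, and it is essential: taking $v=\nabla_\epsilon^{m+1}z$ directly and estimating its source crudely by H\"older--Young would require $m+1$ derivatives of $f$, while the statement allows only $\|f\|_{W_\epsilon^{m,(4p+2)/7}}$; it is also precisely where $\nabla_\epsilon Yu$ (through $X_l\omega^l_{k,i}$) enters with the exponent $(2p+1)/3$.

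A second, related point is the order of deduction. Your plan ``prove (ii) by absorbing the leading gradient, then deduce (i) from (ii)'' is circular as stated: the right--hand side of the second--order Caccioppoli bound (Lemma \ref{LEMcaccioppolisecond}, applied to $v$) contains $\int|\nabla_\epsilon v|^{p+1/2}\phi^{2p}=\int|\nabla_\epsilon^{m+1}z|^{p+1/2}\phi^{2p}$, i.e.\ exactly the quantity of (i), and this term cannot be absorbed into the left--hand side of (ii). The two estimates must be closed simultaneously, as in the proof of Theorem \ref{THEOCaccioppoli1}: substitute the Caccioppoli bound into the interpolation inequality, choose $\delta$ smaller than $1/C_2$ to absorb $\int|\nabla_\epsilon^{m+1}z|^{p+1/2}\phi^{2p}$, obtain (i), and only then re--insert (i) into the Caccioppoli estimate to obtain (ii). Similarly, the $L^2$ part of the norm in (ii) is $\int|\nabla_\epsilon^{m+1}z|^{p-1}$, which is not dominated by $\|z\|_{W_\epsilon^{m,4p+2}}$ as you claim, but follows from (i) (plus Young) after the absorption. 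With these corrections, your bookkeeping of $\tilde f$ --- matching $\|u\|_{W_\epsilon^{m+1,(4p+2)/3}}$, $\|u\|_{W_\epsilon^{m+2,(2p+1)/3}}$, $\|Yu\|_{W_\epsilon^{m,(2p+1)/3}}$, $\|f\|_{W_\epsilon^{m,(4p+2)/7}}$, with the $\delta$--absorbable $\|\nabla_\epsilon^{m+1}z\|_{L^{p+1/2}}^{p+1/2}$ pieces, and the $\epsilon$--homogeneity recorded in (\ref{assCM}) --- coincides with the paper's iteration step.
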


For the sake of simplicity, from now on we will abandon the subscript $u$ in the nonlinear vector fields, and we will make use of the following short-hand notation
$$
\A_{ij} (\nu) = \frac{a_{ij}(\nu)}{\sqrt{1+|\nu|^2}}
$$
so that we can write the operator in equation (\ref{divergence}) as $M_{\epsilon,u} = X_i \A_{ij}(\nabla_\epsilon u)X_j$, or
$$
M_{\epsilon,u} = X_i \A_{ij}X_j\ .
$$

By differentiating equation (\ref{elliptic}), we have the following lemmata.

\begin{lem}\label{LEMfirstderivatives}
If $u$ is a smooth solution of $L_{\epsilon,u} u = 0$, then $z=X_k u$ is a solution of equation
\begin{equation}\label{diveq1}
M_{\epsilon,u} z = f
\end{equation}
where the term at the right hand side is given by
\begin{displaymath}
f = - X_i \A_{ij}\omega^l_{k,j}X_l u - \omega^l_{k,i}X_l\left(\frac{X_i u}{\sqrt{1+|\nabla_\epsilon u|^2}}\right)
\end{displaymath}
and the constants $\omega^l_{j,k}$ are given by (\ref{structure}).
\end{lem}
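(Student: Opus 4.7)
The plan is to differentiate the regularized equation $L_{\epsilon,u}u = X_i\bigl(X_iu/\sqrt{1+|\nabla_\epsilon u|^2}\bigr) = 0$ along $X_k$ and then to rearrange the commutator terms until the left-hand side is in the divergence form $M_{\epsilon,u}z$ with $z = X_ku$. Two distinct uses of the commutation identity $[X_k,X_i] = \omega^l_{k,i}X_l$ (established at the end of Section 2) will be needed: one to move $X_k$ past the outer $X_i$, and a second to handle the inner second derivative produced by the chain rule.

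First I would apply $X_k$ to the equation and commute it past the outer $X_i$, obtaining
$$0 = X_i X_k\!\left(\frac{X_iu}{\sqrt{1+|\nabla_\epsilon u|^2}}\right) + \omega^l_{k,i}\, X_l\!\left(\frac{X_iu}{\sqrt{1+|\nabla_\epsilon u|^2}}\right).$$
The second summand is already exactly the second contribution to $f$ appearing in the statement. For the first summand, a direct product-rule/chain-rule computation on $X_k\bigl(X_iu/\sqrt{1+|\nabla_\epsilon u|^2}\bigr)$ yields $\A_{ij}(\nabla_\epsilon u)X_kX_ju$, because the combination $\delta_{ij} - X_iu\,X_ju/(1+|\nabla_\epsilon u|^2)$ is precisely $a_{ij}(\nabla_\epsilon u)$ from definition (\ref{coefficients}), and dividing by one more power of $\sqrt{1+|\nabla_\epsilon u|^2}$ produces $\A_{ij}$. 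A second application of the commutator then rewrites $X_kX_ju = X_jz + \omega^l_{k,j}X_lu$; this splits the first summand into $X_i(\A_{ij}X_jz) = M_{\epsilon,u}z$ plus $X_i(\A_{ij}\omega^l_{k,j}X_lu)$. Moving the latter to the right-hand side with a minus sign produces the first contribution to $f$.

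There is no analytic obstacle: smoothness of the approximating $u$ under Definition \ref{defvanishingviscosity} legitimates every differentiation, and the argument is essentially algebraic bookkeeping. The only points requiring care are sign consistency using the antisymmetry $\omega^l_{k,j} = -\omega^l_{j,k}$, and the verification that the chain-rule term coming from $\sqrt{1+|\nabla_\epsilon u|^2}$ assembles correctly with the naive second-derivative term into the projector $a_{ij}$; these are exactly the two places where the nonlinear structure of the equation enters.
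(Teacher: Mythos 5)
Your proposal is correct and follows essentially the same route as the paper: differentiate the regularized equation along $X_k$, commute past the outer $X_i$, expand the inner derivative by the chain rule, and use $X_kX_ju = X_jz + \omega^l_{k,j}X_lu$ to regroup into $M_{\epsilon,u}z$ plus the two contributions to $f$. Your observation that the chain-rule expansion packages directly as $\A_{ij}(\nabla_\epsilon u)X_kX_ju$ is just a compact rewriting of the same computation the paper carries out term by term.
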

\begin{proof}
\begin{eqnarray*}
0 & = & L_{\epsilon,u} u = X_k L_{\epsilon,u} u = X_k X_i \left(\frac{X_i u}{\sqrt{\sq}}\right)\\
& = & [X_k, X_i] \left(\frac{X_i u}{\sqrt{\sq}}\right) + X_i X_k \left(\frac{X_i u}{\sqrt{\sq}}\right)\\
& = & [X_k, X_i] \left(\frac{X_i u}{\sqrt{\sq}}\right) + X_i \left[\frac{X_i X_k u}{\sqrt{\sq}} + \frac{[X_k, X_i] u}{\sqrt{\sq}}\right.\\
&& \left. - (X_i u)\left(\frac{(X_j u) (X_j X_k u + [X_k,X_j]u)}{\sqrt{\sq}^3}\right)\right]\\
& = & [X_k, X_i] \left(\frac{X_i u}{\sqrt{\sq}}\right) + X_i \left[\frac{X_i z}{\sqrt{\sq}} - \frac{(X_i u)(X_j u)}{\sqrt{\sq}^3} X_j z\right]\\
&& + X_i\left[\frac{[X_k,X_i]u}{\sqrt{\sq}} - \frac{(X_i u)(X_j u)}{\sqrt{\sq}^3}[X_k,X_j]u\right]\ .
\end{eqnarray*}
\end{proof}

\begin{lem}\label{LEMsecondderivatives}
If $z$ is a smooth solution of $M_{\epsilon,u} z = f$, then $s=X_k z$ is again a solution of the same equation, but with a different right hand side, \ie
\begin{equation}\label{diveq2}
M_{\epsilon,u} s = \rho
\end{equation}
where $\rho$ is given by
\begin{eqnarray*}
\rho & = & X_k f - \left(X_iX_k \A_{ij}\right)\left(X_j z\right) - X_i \A_{ij}\omega^l_{k,j}X_l z - \omega^l_{k,i}X_l \A_{ij} X_j z
\end{eqnarray*}
and the constants $\omega^l_{j,k}$ are given by (\ref{structure}).
\end{lem}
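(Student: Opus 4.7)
The plan is to mimic the proof of Lemma \ref{LEMfirstderivatives} one step further: apply $X_k$ to both sides of $M_{\epsilon,u}z=f$ and then commute $X_k$ past the two vector fields composing $M_{\epsilon,u}=X_i\A_{ij}X_j$, using the commutator identity $[X_{k,u},X_{j,u}]=\omega^l_{k,j}X_{l,u}$ recorded in (\ref{structure}). The nonlinearity of the fields $X_{i,u}$ is entirely absorbed into the structure coefficients $\omega^l_{j,k}$ computed in Section 2, so the computation should reduce to a purely algebraic rearrangement.

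Concretely, I would start from $X_k f = X_k X_i(\A_{ij} X_j z)$ and use $X_k X_i = X_i X_k + \omega^l_{k,i} X_l$ to push $X_k$ past the outermost $X_i$. The price is the term $\omega^l_{k,i} X_l(\A_{ij} X_j z)$, which matches the last summand of $\rho$. Inside, Leibniz gives $X_k(\A_{ij} X_j z) = (X_k \A_{ij}) X_j z + \A_{ij} X_k X_j z$, and a second application of the commutator identity rewrites $X_k X_j z = X_j X_k z + \omega^l_{k,j} X_l z$. The term $X_i(\A_{ij} X_j (X_k z))$ is precisely $M_{\epsilon,u} s$, while the contribution $X_i(\A_{ij}\omega^l_{k,j} X_l z)$ is the third summand of $\rho$, and the contribution coming from the derivative landing on $\A_{ij}$ is the second summand $(X_i X_k \A_{ij})(X_j z)$ in the notation of the statement.

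Collecting these pieces and solving for $M_{\epsilon,u} s$ yields the identity $M_{\epsilon,u} s = \rho$ as claimed. No genuine obstacle is expected: the computation is entirely mechanical once the commutator relations of Section 2 are on hand, the only point requiring care being the consistent bookkeeping of which vector field is being commuted against which, so that the antisymmetry of $\omega^l_{j,k}$ is applied with the correct sign.
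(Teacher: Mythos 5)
Your proposal is correct and follows essentially the same route as the paper: apply $X_k$ to $f = X_i\A_{ij}X_j z$, commute $X_k$ past the outer $X_i$ producing $\omega^l_{k,i}X_l(\A_{ij}X_j z)$, use Leibniz and the inner commutator $X_kX_jz = X_jX_kz + \omega^l_{k,j}X_lz$, and solve for $M_{\epsilon,u}s$. Your reading of the second summand as the full $X_i$-derivative of $(X_k\A_{ij})(X_jz)$ is also the one the paper intends, as its later integrations by parts confirm.
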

\begin{proof}
\begin{eqnarray*}
X_k f & = & X_k X_i \A_{ij} X_j z = X_i X_k \A_{ij} X_j z + [X_k, X_i] \A_{ij} X_j z\\
& = & X_i\left[(X_k\A_{ij})(X_jz) + \A_{ij}X_jX_kz + A_{ij}[X_k,X_j]z\right] + [X_k, X_i] \A_{ij} X_j z\ .
\end{eqnarray*}
\end{proof}

By direct computation and noting that, under hypothesis (\ref{assumption}), there exists a positive constant $C$ depending only on $M$ of (\ref{assumption}) such that
\begin{equation}\label{XA}
|X_i\A_{ij}| \leq C |\nabla^2_\epsilon u| \quad j=1,2
\end{equation}
we obtain the following corollaries.

\begin{cor}\label{CORfz}
There exists a positive constant $C$ depending only on $M$ such that the function $f$ of Lemma \ref{LEMfirstderivatives} satisfies
\begin{displaymath}
|f| \leq C (|\nabla_\epsilon u| + |\nabla_\epsilon^2 u|).
\end{displaymath}
\end{cor}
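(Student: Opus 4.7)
The plan is to start from the explicit expression of $f$ given in Lemma \ref{LEMfirstderivatives},
$$
f = -X_i\bigl(\A_{ij}\,\omega^l_{k,j}\,X_l u\bigr) \;-\; \omega^l_{k,i}\,X_l\!\Bigl(\tfrac{X_i u}{\sqrt{1+|\nabla_\epsilon u|^2}}\Bigr),
$$
and to expand both terms by the Leibniz rule, producing a finite sum of products. The first term gives contributions of the shape $(X_i\A_{ij})\omega^l_{k,j}X_l u$, $\A_{ij}(X_i\omega^l_{k,j})X_l u$ and $\A_{ij}\omega^l_{k,j}X_i X_l u$; the second term, after applying $X_l$ through the quotient (also using $X_l X_i u = X_i X_l u + \omega^m_{l,i}X_m u$), yields analogous products involving one or two factors of $X_{\cdot}u$ and at most one factor of $\nabla_\epsilon^2 u$, all divided by powers of $\sqrt{1+|\nabla_\epsilon u|^2}\ge 1$.

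Next, I would bound each individual factor using estimates already in hand. From definition (\ref{coefficients}) and (\ref{positive}) one has $|\A_{ij}|\le 1$; by (\ref{XA}) $|X_i\A_{ij}|\le C|\nabla_\epsilon^2 u|$; by (\ref{assCM}) the $\omega^l_{k,j}$ are bounded by $C_M$. Trivially $|X_l u|\le|\nabla_\epsilon u|$ and $|X_iX_l u|\le|\nabla_\epsilon^2 u|$. The only factor that does not admit an off-the-shelf estimate is $X_i\omega^l_{k,j}$: looking at the explicit formulas for $\omega^1,\omega^2$ in the previous section, this derivative splits into a piece made of Lie algebra structure coefficients $c^a_{bc}$ and derivatives of $\sigma$'s evaluated on the graph (bounded in terms of $M$ through the Lipschitz character of $u$ guaranteed by (\ref{assumption})), plus pieces of the form $X_i(X_{1,u}u)$ and $X_i(Y_u u)$ which are controlled by $|\nabla_\epsilon^2 u|$ once one remembers $X_{2,u}=\epsilon Y_u$.

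Combining these factor-by-factor estimates, each term in the expansion of $f$ is dominated by a constant times either $|\nabla_\epsilon u|$ or $|\nabla_\epsilon^2 u|$ (or their product, which is controlled by $|\nabla_\epsilon u|+|\nabla_\epsilon^2 u|$ using the a priori bound $|\nabla_\epsilon u|\le C_M$ coming from (\ref{assumption})). Summing over the finite number of terms yields the stated bound $|f|\le C(|\nabla_\epsilon u|+|\nabla_\epsilon^2 u|)$ with $C=C(M,\Omega)$.

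The only genuinely delicate step, and the one I would be most careful about, is the $\epsilon$-bookkeeping needed to ensure that $C$ really is independent of $\epsilon$. Every potential appearance of $1/\epsilon$, coming from the normalization $X_{2,u}=\epsilon Y_u$, must be matched by an $\epsilon$-factor coming from either $\omega^1=O(\epsilon)$ or from $X_{2,u}u=\epsilon\,Y_u u$. This is exactly the content of estimate (\ref{assCM}), which was phrased precisely so that the four quantities $m_1,\, m_2/\epsilon,\, \omega^1/\epsilon,\, \omega^2$ are all simultaneously bounded; once this balance is checked term by term, the proof is complete.
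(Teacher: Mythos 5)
Your proposal is correct and follows essentially the same route as the paper, which dispatches this corollary "by direct computation" using the bound $|X_i\A_{ij}|\leq C|\nabla_\epsilon^2 u|$ of (\ref{XA}) together with (\ref{assCM}) and the Lipschitz bound (\ref{assumption}). Your final remark on the $\epsilon$-bookkeeping is indeed the only delicate point: the $\tfrac1\epsilon X_iX_{2,u}u$ piece hidden in $X_i\omega^2$ is harmless precisely because in $f$ it appears multiplied by $X_{2,u}u=\epsilon Y_u u$, which is the balance guaranteed by (\ref{assCM}).
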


\begin{cor}\label{CORfv}
If $u$ is a smooth solution of $L_{\epsilon,u} u = 0$, then $v = Y u$ is a solution of equation
$$
M_{\epsilon,u}v = f_v
$$
where $f_v$ is such that there exists a positive constant $C$ depending only on $M$ such that
$$
|f_v| \leq C(1 + |\nabla_\epsilon^2 u| + |\nabla_\epsilon v|)\ .
$$
\end{cor}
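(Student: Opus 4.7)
My plan is to imitate the proof of Lemma~\ref{LEMfirstderivatives}, but to differentiate the equation $L_{\epsilon,u}u=0$ along $Y=\epsilon^{-1}X_{2,u}$ in place of one of the horizontal fields $X_{k,u}$. Applying $Y$, commuting it past the outer divergence, and substituting $YX_iu=X_iv+[Y,X_i]u$ inside the chain-rule expansion of $X_iu/\sqrt{\sq}$, the terms linear in $X_iv$ reassemble into $M_{\epsilon,u}v$, leaving the identification
\begin{equation*}
f_v = -[Y,X_i]\!\left(\frac{X_iu}{\sqrt{\sq}}\right) - X_i\bigl(\A_{ij}\,[Y,X_j]u\bigr).
\end{equation*}
It then remains to bound each of these two terms by $C(1+|\nabla_\epsilon^2 u|+|\nabla_\epsilon v|)$.

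The structural key is that, although the rescaled commutator $[Y,X_{1,u}]=-(\omega^i/\epsilon)X_{i,u}$ has apparently singular coefficients, re-expressing it in the basis $\{X_{1,u},Y\}$ gives
\begin{equation*}
[Y,X_{1,u}] = -\frac{\omega^1}{\epsilon}X_{1,u} - \omega^2\,Y,\qquad [Y,X_{2,u}]=0,
\end{equation*}
with both $\omega^1/\epsilon$ and $\omega^2$ controlled by $C_M$ via (\ref{assCM}). Indeed the potentially divergent contribution $\epsilon^{-1}X_{2,u}u=v$ hidden inside $\omega^2/\epsilon$ is precisely the one that is reabsorbed into $\omega^2\,Y$, and assumption (\ref{assumption}) keeps it bounded. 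In particular $|[Y,X_j]u|\leq C_M$ for $j=1,2$.

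With this in hand, the first piece $[Y,X_i](X_iu/\sqrt{\sq})$ is estimated by writing $[Y,X_i]=aX_{1,u}+bY$ with bounded $a,b$: the $X_{1,u}$-component differentiates a rational function of $\nabla_\epsilon u$ and yields $O(|\nabla_\epsilon^2 u|)$, while the $Y$-component, passed through the chain rule, produces $YX_ju=X_jv+[Y,X_j]u$ and hence $O(1+|\nabla_\epsilon v|)$. For the second piece $X_i(\A_{ij}[Y,X_j]u)$, the product rule splits the estimate into $|X_i\A_{ij}|\,|[Y,X_j]u|\leq C|\nabla_\epsilon^2 u|$ using (\ref{XA}) and the bound $|[Y,X_j]u|\leq C_M$, while the remaining term $\A_{ij}X_i([Y,X_j]u)$ is expanded by product and chain rules and again reproduces quantities of size $1+|\nabla_\epsilon^2 u|+|\nabla_\epsilon v|$. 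The only delicate point—and what prevents a one-line appeal to Lemma~\ref{LEMfirstderivatives} with $z=X_{2,u}u=\epsilon v$—is the $\epsilon$-accounting: the naive right-hand side is of size $O(\epsilon^{-1})$, and uniformity in $\epsilon$ emerges only after simultaneously dividing by $\epsilon$ (the passage to $Y$) and rewriting the commutators in the $\{X_{1,u},Y\}$ basis, using the Lipschitz bound on $v$ to absorb the singular contribution.
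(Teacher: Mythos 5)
Your derivation is correct and follows exactly the route the paper intends but leaves as a ``direct computation'': differentiate $L_{\epsilon,u}u=0$ along $Y$ as in Lemma~\ref{LEMfirstderivatives}, identify $M_{\epsilon,u}v$ plus commutator terms, and estimate those using (\ref{assCM}), (\ref{assumption}) and (\ref{XA}). Your observation that $[Y,X_{1,u}]=-\frac{\omega^1}{\epsilon}X_{1,u}-\omega^2 Y$ with coefficients bounded uniformly in $\epsilon$ (and $[Y,X_{2,u}]=0$) is precisely the point that makes the bound on $f_v$ uniform, so the proposal is a faithful filling-in of the paper's omitted proof.
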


From the study of equations (\ref{diveq1}) and (\ref{diveq2}) we can
obtain estimates for the derivatives of the solution to equation
(\ref{elliptic}).

\begin{prop}[First interpolation inequality]\label{PROPinterp1}
Let us assume (\ref{assumption}). Then for every $p \geq 3$ there exists a $\delta^* > 0$ and a positive constant $C$ depending only on $p$, $\Omega$ and $M$ of (\ref{assumption}) such that for every $0 < \delta < \delta^*$ it holds
\begin{displaymath}
\int |X_iz|^{p+1/2}\phi^{2p} \leq \frac{C}{\delta} \left(\int |z|^{4p+2}\phi^{2p} + 1\right)+  \delta \int \left|\nabla_\epsilon\left(|X_iz|^{(p-1)/2}\right)\right|^2\phi^{2p}
\end{displaymath}
for every function $z \in \C^\infty(\Omega)$ and every cutoff
function $\phi$.
\end{prop}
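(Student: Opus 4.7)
The plan is to derive the inequality by a single integration by parts followed by a cascade of Young's inequalities, in the same spirit as a Caccioppoli-type estimate. I would begin by writing
\begin{equation*}
\int |X_iz|^{p+1/2}\phi^{2p} = \int |X_iz|^{p-3/2}(X_iz)\,X_i(z)\,\phi^{2p}
\end{equation*}
and integrate the outer $X_i$ (the one acting on $z$) by parts using the adjoint formula (\ref{adjoints}), $X_i^\dag = -X_i - m_i$. Expanding the surviving $X_i$ via the chain and product rules yields three contributions: $T_1$ containing $X_iX_iz$, $T_2$ containing $X_i(\phi^{2p})$, and $T_3$ containing $m_i$.

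The key observation for $T_1$ is that, wherever $X_iz\neq 0$,
\begin{equation*}
|X_iz|^{(p-3)/2}\,X_iX_iz = \frac{2}{p-1}\,\sign(X_iz)\,X_i\!\left(|X_iz|^{(p-1)/2}\right),
\end{equation*}
so $|T_1|$ is controlled pointwise by $C_p\,|z|\,|X_iz|^{p/2}\,|X_i(|X_iz|^{(p-1)/2})|\,\phi^{2p}$. Young's inequality then splits this contribution into $\delta\int |\nabla_\epsilon(|X_iz|^{(p-1)/2})|^2\phi^{2p}$ and $C_\delta\int |z|^2|X_iz|^p\phi^{2p}$, using $|X_iA|\leq |\nabla_\epsilon A|$. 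The terms $T_2$ and $T_3$ have similar structure: after using $|X_i\phi|$ bounded for the cutoff and the uniform bound on $m_i$ provided by (\ref{assCM}) under assumption (\ref{assumption}), they collapse to an expression of the form $C_M\int |z|\,|X_iz|^{p-1/2}\,\phi^{2p-1}$ with constant depending only on $p$ and $M$.

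What remains is to interpolate these mixed factors back into the desired form. A first Young's inequality with conjugate exponents $(1+\tfrac{1}{2p},\,2p+1)$ gives $|z|^2|X_iz|^p \leq \eta|X_iz|^{p+1/2} + C_\eta|z|^{4p+2}$; a second Young's with exponents $\bigl(\tfrac{p+1/2}{p-1/2},\,p+1/2\bigr)$, applied with the appropriate splitting of the $\phi$-powers, gives $|z|\,|X_iz|^{p-1/2}\phi^{2p-1} \leq \eta|X_iz|^{p+1/2}\phi^{2p} + C_\eta|z|^{p+1/2}\phi^{p-1/2}$. A third application with exponents $(4,\tfrac{4}{3})$ bounds $|z|^{p+1/2}\phi^{p-1/2}\leq |z|^{4p+2}\phi^{2p} + C\phi^{2(p-1)/3}$, whose second term integrates to a constant controlled by $|\Omega|$. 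Choosing $\eta$ small compared to $\delta$ then absorbs every $|X_iz|^{p+1/2}\phi^{2p}$ contribution back onto the left-hand side, with $\delta^*$ being the threshold at which absorption remains positive. The main obstacle will be the bookkeeping in this cascade: the exponents must be balanced so that the final coefficient of $\int|\nabla_\epsilon(|X_iz|^{(p-1)/2})|^2\phi^{2p}$ ends up as exactly $\delta$ while the prefactor of $\int|z|^{4p+2}\phi^{2p}+1$ is $O(1/\delta)$. A secondary technical point is the non-smoothness of $t\mapsto |t|^{(p-1)/2}$ near $t=0$ (relevant only because of the sign function and the chain rule at zeroes of $X_iz$), which is handled by the standard regularization $(|X_iz|^2+\tau)^{1/2}$ and the limit $\tau\to 0$ at the end.
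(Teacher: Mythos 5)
Your proposal is correct and follows essentially the same route as the paper's own proof: the same integration by parts via $X_i^\dag=-X_i-m_i$ producing the three terms (the $X_i^2z$ term, the $X_i\phi$ term, and the $m_i$ term), the same chain-rule identity (\ref{leibniz}) converting $|X_iz|^{(p-3)/2}X_i^2z$ into $X_i\big(|X_iz|^{(p-1)/2}\big)$, and the same Young cascade built on $|z|^2|X_iz|^p\leq \eta|X_iz|^{p+1/2}+C_\eta|z|^{4p+2}$, with absorption of the $|X_iz|^{p+1/2}$ contributions for $\delta<\delta^*$. The only cosmetic differences are that the paper keeps $\int|X_i\phi|^{2p}$ and $\int\phi^{2p}$ as explicit terms rather than bounding $|X_i\phi|$ pointwise, and it skips the $\tau$-regularization, which is indeed unnecessary since $p\geq 3$ makes all the powers involved at least $C^1$ or Lipschitz.
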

\begin{proof}
In this proof we will not make use of the convention of summation over repeated indices.
\begin{eqnarray*}
I & = & \int |X_iz|^{p+1/2}\phi^{2p} = \int X_iz|X_iz|^{p-1/2}\sign(X_iz)\phi^{2p}\\
& = & \int z X_i^\dag \left[|X_iz|^{p-1/2}\sign(X_iz)\phi^{2p}\right]\\
& = & - \int m_i z |X_iz|^{p-1/2}\sign(X_iz)\phi^{2p} - (p-1/2)\int z X_i^2z|X_iz|^{p-3/2}\phi^{2p}\\
&& - 2p \int z|X_iz|^{p-1/2}\sign(X_iz)\phi^{2p-1}X_i\phi \leq C_M I_1 + (p-1/2) I_2 + 2p I_3
\end{eqnarray*}
where, by Young inequality and noting that
\begin{equation}\label{leibniz}
\int \left|X_i\left(|X_iz|^{(p-1)/2}\right)\right|^2\phi^{2p} = \frac{(p-1)^2}{4} \int |X_i^2z|^2 |X_iz|^{p-3}\phi^{2p}
\end{equation}
we have
\begin{eqnarray*}
I_1 \!\!\!\!& = & \!\!\!\!\int |z| |X_iz|^{p-1/2}\phi^{2p} \leq \delta I + \frac{1}{\delta} \int |z|^{4p+2}\phi^{2p}  + \frac{1}{\delta} \int \phi^{2p}\\
I_2 \!\!\!\!& = & \!\!\!\!\int |z| |X_i^2z||X_iz|^{p-3/2}\phi^{2p} \leq \frac{\delta}{p-1/2} \int \left|X_i\left(|X_iz|^{(p-1)/2}\right)\right|^2\phi^{2p}\\
&& + \delta \frac{(p-1)^2}{4(p-1/2)}I + \frac{1}{\delta} \frac{4(p-1/2)}{(p-1)^2} \int |z|^{4p+2}\phi^{2p}\\
I_3 \!\!\!\!& = & \!\!\!\!\int |z||X_iz|^{p-1/2}\phi^{2p-1}|X_i\phi| \leq \delta I + \frac{1}{\delta} \left(\int |z|^{4p+2}\phi^{2p} + \int |X_i\phi|^{2p} + \int \phi^{2p} \right)\ .
\end{eqnarray*}
Thus
\begin{eqnarray*}
I & \leq & \left(C_M + 2p + \frac{(p-1)^2}{4}\right)\delta I + \frac{1}{\delta} \left( C_M + 2p + \frac{4(p-1/2)^2}{(p-1)^2} \right) \int |z|^{4p+2}\phi^{2p}\\
&& + \delta \int \left|\nabla_\epsilon\left(|X_iz|^{(p-1)/2}\right)\right|^2\phi^{2p} + \frac{C_M + 2p}{\delta}\left(\int |X_i\phi|^{2p} + \int \phi^{2p} \right)
\end{eqnarray*}
hence setting $\delta^* = \left(C_M + 2p + \frac{(p-1)^2}{4}\right)^{-1}$ we obtain the statement.
\end{proof}

For solutions to equations (\ref{diveq1}) and (\ref{diveq2}) it is possible to estimate the last term in Proposition \ref{PROPinterp1}.

\begin{lem}\label{LEMcaccioppolifirst}
Let $z$ be a smooth solution of equation (\ref{diveq1}) and let $f$
be locally summable in $\Omega$, then for every $p\geq 3$ there exists
a positive constant $C_1$ depending only on $p$ and the constant $M$
in (\ref{assumption}) such that for any cutoff function $\phi$  it
holds
$$
\int \left|\nabla_\epsilon \left(|z|^{(p-1)/2}\right)\right|^2\phi^{2p} \leq C_1 \left(\frac{1}{\delta}\int |z|^{p-1} (\phi^{2} + |\nabla_\epsilon \phi|^{2})\phi^{2p-2} - \int f|z|^{p-3}z \phi^{2p} \right)\ .
$$
\end{lem}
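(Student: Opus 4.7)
The plan is to test the weak form of equation (\ref{diveq1}) against $|z|^{p-3}z\phi^{2p}$ and integrate by parts, exploiting the uniform ellipticity of the coefficient matrix $\A_{ij}$ that follows from the $L^\infty$ bound on $|\nabla_\epsilon u|$ in assumption (\ref{assumption}).

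More precisely, I would multiply $X_i(\A_{ij}X_jz)=f$ by $|z|^{p-3}z\phi^{2p}$, integrate over $\Omega$, and move $X_i$ off the flux using the adjoint relation $X_i^\dag=-X_i-m_i$ from (\ref{adjoints}). Expanding
$$X_i\!\left(|z|^{p-3}z\phi^{2p}\right) = (p-2)|z|^{p-3}(X_iz)\phi^{2p} + 2p\,|z|^{p-3}z\,\phi^{2p-1}X_i\phi,$$
yields the principal term $(p-2)\int |z|^{p-3}\A_{ij}(X_iz)(X_jz)\phi^{2p}$, plus a cross term involving $X_i\phi$, a term involving $m_i$, and $-\int f|z|^{p-3}z\phi^{2p}$.

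The ellipticity inequality (\ref{positive}) together with assumption (\ref{assumption}) supplies a constant $c_M>0$ with $\A_{ij}\xi_i\xi_j\ge c_M|\xi|^2$. Since by Leibniz
$$\left|\nabla_\epsilon\!\left(|z|^{(p-1)/2}\right)\right|^2 = \frac{(p-1)^2}{4}|z|^{p-3}|\nabla_\epsilon z|^2,$$
the principal term dominates $\frac{4c_M(p-2)}{(p-1)^2}\int |\nabla_\epsilon(|z|^{(p-1)/2})|^2\phi^{2p}$, which is exactly the left-hand side of the claimed estimate. The two cross terms are controlled via Young's inequality. For the gradient-of-cutoff term I would split
$$|z|^{p-2}|X_jz|\phi^{2p-1}|X_i\phi| = \bigl(|z|^{(p-3)/2}|X_jz|\phi^p\bigr)\bigl(|z|^{(p-1)/2}\phi^{p-1}|X_i\phi|\bigr),$$
so that Young produces a small multiple of $\int |z|^{p-3}|\nabla_\epsilon z|^2\phi^{2p}$, which is absorbed into the left, plus a remainder of the form $\int |z|^{p-1}|\nabla_\epsilon\phi|^2\phi^{2p-2}$. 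The same splitting applied to the $m_i$ term, combined with the uniform bound (\ref{assCM}), contributes an expression of the form $\int |z|^{p-1}\phi^{2p}$, which fits inside the $\int |z|^{p-1}\phi^{2}\phi^{2p-2}$ term of the statement.

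The main point to watch is that every constant remains independent of $\epsilon$. The uniform ellipticity of $\A_{ij}$ is guaranteed because (\ref{assumption}) keeps $|\nabla_\epsilon u|$ bounded, and in (\ref{assCM}) the coefficient $m_2$ carries an explicit $\epsilon$ factor matching the scaling $X_{2,u}=\epsilon X_2|_\M$, so the combination $m_i\A_{ij}X_jz$ produces no stray $1/\epsilon$. Once all these bounds are assembled, choosing the Young parameter small enough to absorb the gradient pieces into the left-hand side gives precisely the asserted inequality with a constant $C_1$ depending only on $p$ and $M$.
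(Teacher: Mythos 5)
Your proposal is correct and follows essentially the same route as the paper: testing $X_i(\A_{ij}X_jz)=f$ against $|z|^{p-3}z\phi^{2p}$, integrating by parts via the adjoints (\ref{adjoints}), invoking the ellipticity bound (\ref{positive}) together with (\ref{assCM}), and absorbing the cutoff and $m_i$ cross terms by Young's inequality before applying the Leibniz identity (\ref{leibniz}). The only stylistic difference is that the paper bounds the quadratic form directly with the explicit constant $(1+C_M^2)^{-3/2}$ rather than an abstract $c_M$, which changes nothing in substance.
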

We explicitly note that we can not estimate the last term with its
absolute value, but we prefer keeping the minus sign, since a very
delicate estimate will be needed in the next lemma.
\begin{proof}
The proof relies essentially on uniform boundedness and ellipticity of the coefficients $a_{ij}$ (\ref{coefficients}). We start by multiplying both members of (\ref{diveq1}) by $|z|^{p-3}z\phi^{2p}$ and integrating over $\Omega$, then integrate by parts using (\ref{adjoints}). At that point we make use of (\ref{positive}) and Lipschitz condition (\ref{assCM}):
\begin{eqnarray*}
\int f |z|^{p-3}z \!\!\! && \!\!\!\!\!\!\!\!\!\!\!\!\!\phi^{2p} \ = \ \int \left(X_i \A_{ij} X_j z\right) |z|^{p-3}z\phi^{2p}\\
& = & - (p-2) \int \A_{ij} (X_j z) (X_i z) |z|^{p-3}\phi^{2p}\\
&& - 2p \int \A_{ij} (X_j z) |z|^{p-3}z (X_i \phi)\phi^{2p-1} - \int \A_{ij} (X_j z) m_i |z|^{p-3}z \phi^{2p}\\
& \leq & -\frac{p-2}{(1+C_M^2)^{3/2}} \int |\nabla_\epsilon z|^2 |z|^{p-3}\phi^{2p}\\
&& + 2p \int |z|^{p-3}z \phi^{2p-1} |\nabla_\epsilon z| \ |\nabla_\epsilon \phi| + \int |z|^{p-3}z \phi^{2p} |\nabla_\epsilon z| \sqrt{m_1^2 + m_2^2}\\
& \leq & \left((2p + \sqrt{2}C_M)\delta-\frac{p-2}{(1+C_M^2)^{3/2}}\right) \int |\nabla_\epsilon z|^2 |z|^{p-3}\phi^{2p}\\
&& + \frac{2p + \sqrt{2}C_M}{\delta} \int |z|^{p-1} (\phi^2 + |\nabla_\epsilon \phi|^2) \phi^{2p-2}
\end{eqnarray*}
where the last transition is Young inequality. By (\ref{leibniz}) and choosing $\delta$ sufficiently small we then end up with the desired claim.
\end{proof}

\begin{lem}\label{LEMcaccioppolisecond}
Let $z$ be a smooth solution of equation (\ref{diveq1}) and let $f$
be locally summable in $\Omega$, then for every $p\geq 3$ there exists
a positive constant $C_2$ depending only on $p$ and the constant $M$
in (\ref{assumption}) such that for any cutoff function $\phi$ it
holds
\begin{eqnarray*}
\int \left|\nabla_\epsilon \left(|\nabla_\epsilon z|^{(p-1)/2}\right)\right|^2\phi^{2p}
& \leq & C_2 \left(\int |\nabla_\epsilon z|^{p+1/2}\phi^{2p} + \int |\nabla_\epsilon^2 u|^{(4p+2)/3}\phi^{2p}\right.\\
&& \left. + \int |\nabla_\epsilon Yu|^{(2p+1)/3} \phi^{2p} + \int |f|^{(4p+2)/7}\phi^{2p} + 1\right)
\end{eqnarray*}
\end{lem}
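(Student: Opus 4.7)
The plan is to apply Lemma \ref{LEMcaccioppolifirst} componentwise to the horizontal derivatives $s_k := X_k z$ for $k=1,2$, and then to sum over $k$. By Lemma \ref{LEMsecondderivatives}, each $s_k$ itself satisfies $M_{\epsilon,u}s_k = \rho_k$ with
$$
\rho_k = X_k f - (X_iX_k\A_{ij})(X_j z) - X_i\A_{ij}\omega^l_{k,j}X_l z - \omega^l_{k,i}X_l\A_{ij}X_j z,
$$
so feeding $s_k$ into Lemma \ref{LEMcaccioppolifirst} with source $\rho_k$ yields, after summing in $k$ and using the pointwise equivalence $|\nabla_\epsilon z|^{p-1}\sim\sum_k|X_kz|^{p-1}$, a left-hand side comparable to $\int|\nabla_\epsilon(|\nabla_\epsilon z|^{(p-1)/2})|^2\phi^{2p}$ and on the right a $\frac{C}{\delta}\int|\nabla_\epsilon z|^{p-1}(\phi^2+|\nabla_\epsilon\phi|^2)\phi^{2p-2}$ piece together with the four source integrals $-\int\rho_k|s_k|^{p-3}s_k\phi^{2p}$. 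A single Young inequality turns the first of these into a fraction of $\int|\nabla_\epsilon z|^{p+1/2}\phi^{2p}$ plus a cutoff term absorbed into the additive constant.

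I would then treat the four pieces of $\rho_k$ one at a time. The two terms $X_i\A_{ij}\omega^l_{k,j}X_l z$ and $\omega^l_{k,i}X_l\A_{ij}X_j z$ are estimated directly using (\ref{XA}) and the uniform bound (\ref{assCM}) on $\omega$, producing a contribution $\int|\nabla_\epsilon^2 u|\,|\nabla_\epsilon z|^{p-1}\phi^{2p}$. In $(X_iX_k\A_{ij})(X_j z)$ I would integrate one $X_i$ by parts onto $(X_jz)|s_k|^{p-3}s_k\phi^{2p}$ so that the coefficient becomes $X_k\A_{ij}$, bounded by $C|\nabla_\epsilon^2 u|$ via (\ref{XA}); when the new $X_i$ lands on $(X_jz)$ or on $|s_k|^{p-3}s_k$ the resulting factor $|\nabla_\epsilon^2 z||s_k|^{(p-3)/2}\sim|\nabla_\epsilon w_k|$, with $w_k=|X_kz|^{(p-1)/2}$, is absorbed into the left-hand side by Young with small parameter, leaving a residue controlled by $\int|\nabla_\epsilon^2 u|^2|\nabla_\epsilon z|^{p-1}\phi^{2p}$. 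The $X_kf$ term is handled symmetrically: integrating $X_k$ by parts gives a leading contribution $\int|f|\,|\nabla_\epsilon z|^{(p-3)/2}|\nabla_\epsilon w_k|\phi^{2p}$, and Young transfers the $|\nabla_\epsilon w_k|^2$ piece to the left-hand side, leaving $\int|f|^2|\nabla_\epsilon z|^{p-3}\phi^{2p}$. Finally, to produce the $\nabla_\epsilon Yu$ contribution I would use (\ref{structure}) to note that $\omega^l_{k,j}$ depends linearly on $Yu$, so whenever an integration by parts forces a derivative onto $\omega$ (for instance in $X_i\A_{ij}\omega^l_{k,j}X_lz$) the resulting factor is bounded by $|\nabla_\epsilon Yu|$, yielding $\int|\nabla_\epsilon Yu|\,|\nabla_\epsilon z|^{p-1}\phi^{2p}$.

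The exponents in the statement then emerge from three balanced Young inequalities paired against $\int|\nabla_\epsilon z|^{p+1/2}\phi^{2p}$. For $|f|^2|\nabla_\epsilon z|^{p-3}$, conjugate pairing of $(p+1/2)/(p-3)$ on $|\nabla_\epsilon z|$ with $(2p+1)/7$ on $|f|^2$ produces the exponent $(4p+2)/7$ on $f$. For $|\nabla_\epsilon^2 u|^2|\nabla_\epsilon z|^{p-1}$, the conjugate pair $(p+1/2)/(p-1)$ and $(2p+1)/3$ on $|\nabla_\epsilon^2 u|^2$ gives $(4p+2)/3$. For the linear factor $|\nabla_\epsilon Yu||\nabla_\epsilon z|^{p-1}$, the same conjugate pair applied to $|\nabla_\epsilon Yu|^1$ produces exactly $(2p+1)/3$. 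The constant $C_2$ is then the composition of all these Young constants and of $C_1$ from Lemma \ref{LEMcaccioppolifirst}, depending only on $p$ and $M$.

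The main obstacle will be the term $(X_iX_k\A_{ij})(X_jz)$: the coefficient formally carries two horizontal derivatives of a nonlinear function of $\nabla_\epsilon u$, which would require $\nabla_\epsilon^3u$ if differentiated naively. The integration by parts that reduces it to $X_k\A_{ij}$ is therefore essential, but it generates adjoint remainders involving $m_i$, which by (\ref{adjoints}) and (\ref{assCM}) hide a $Yu$ factor; one has to ensure these remainders are consistently bounded under (\ref{assumption}) rather than differentiated. The remaining delicate point is that each intermediate $|\nabla_\epsilon^2 z|$ residue must be absorbed into the left-hand side with a small enough $\delta$ so that the accumulated $\delta$-losses remain summable, which is what ultimately pins down the exponents $p+1/2$, $(4p+2)/3$, $(2p+1)/3$ and $(4p+2)/7$ in the statement.
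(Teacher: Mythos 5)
Your overall strategy is the paper's: apply Lemma \ref{LEMcaccioppolifirst} to $s=X_kz$ with the source $\rho$ given by Lemma \ref{LEMsecondderivatives}, integrate the rough pieces of $\rho$ by parts against $|s|^{p-3}s\phi^{2p}$, bound $|X_k\A_{ij}|$ by $C|\nabla_\epsilon^2u|$ and the differentiated commutator coefficient by $C(|\nabla_\epsilon^2u|+|\nabla_\epsilon Yu|+1)$, and close with Young pairings against $\int|\nabla_\epsilon z|^{p+1/2}\phi^{2p}$; your exponents $(4p+2)/3$, $(2p+1)/3$, $(4p+2)/7$, the treatment of $X_kf$, and the role of the bounded $m_i$-remainders all coincide with the paper's proof (and your identification of the summed componentwise quantities with the stated left-hand side follows the paper's own usage).

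The genuine gap is in your handling of the coefficient term. You read it in non-divergence form, $(X_iX_k\A_{ij})(X_jz)$, move one $X_i$ off the coefficient and accept that it may land on $X_jz$, absorbing the result by the claimed equivalence $|\nabla_\epsilon^2z|\,|s_k|^{(p-3)/2}\sim|\nabla_\epsilon w_k|$ with $w_k=|X_kz|^{(p-1)/2}$. Only one direction of this is true: $|\nabla_\epsilon w_k|=\frac{p-1}{2}|s_k|^{(p-3)/2}|\nabla_\epsilon s_k|\le C\,|s_k|^{(p-3)/2}|\nabla_\epsilon^2z|$, whereas absorption into the left-hand side needs the reverse inequality, which fails when the large second derivatives sit in the other component: the term $X_is_j$ with $j\ne k$ appears with the mismatched weight $|s_k|^{p-3}$ rather than $|s_j|^{p-3}$. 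No Young pairing rescues $\int|\nabla_\epsilon^2u|\,|X_is_j|\,|s_k|^{p-2}\phi^{2p}$ either: matching $|X_is_j|$ with the weight it needs produces a negative power $|s_j|^{3-p}$, and $|\nabla_\epsilon^2z|$ by itself is not among the admissible right-hand side quantities of the statement. The paper avoids this entirely because, as the proof of Lemma \ref{LEMsecondderivatives} shows, this piece of $\rho$ enters as a divergence, $X_i\big[(X_k\A_{ij})(X_jz)\big]$; integrating the outer $X_i$ onto $|s|^{p-3}s\phi^{2p}$ via $X_i^\dag$ of (\ref{adjoints}) yields only $(p-2)\,(X_k\A_{ij})(X_jz)(X_is)|s|^{p-3}\phi^{2p}$ plus cutoff and $m_i$ terms, so the sole second derivative of $z$ that ever appears is $X_is$ with the matching weight $|s|^{p-3}$, which is absorbed into $\delta\int|\nabla_\epsilon(|s|^{(p-1)/2})|^2\phi^{2p}$ exactly as you intend. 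With that correction (the transferred derivative must never fall on $X_jz$), the rest of your argument goes through.
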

\begin{proof}
By Lemma \ref{LEMsecondderivatives}, $s = X_k z$ satisfies equation (\ref{diveq2}), so that we can apply Lemma \ref{LEMcaccioppolifirst} to the left hand side:
\begin{eqnarray*}
I & = & \int \left|\nabla_\epsilon \left(|s|^{(p-1)/2}\right)\right|^2\phi^{2p} \leq C_1 \left(\frac{1}{\delta}\int |s|^{p-1} \phi^{2p} + \frac{1}{\delta}\int |s|^{p-1} |\nabla_\epsilon\phi|^{2}\phi^{2p-2}\right.\\
&& \left. - \int \rho|s|^{p-3}s \phi^{2p} \right) = C_1 \left( \frac{J' + J''}{\delta} - J \right)\ .
\end{eqnarray*}
Let us now consider the last term at the right hand side explicitly in terms of $f$
\begin{eqnarray*}
J & = & \int \rho|s|^{p-3}s \phi^{2p} = \int \left(X_k f\right) |s|^{p-3}s \phi^{2p} - \int \left(X_i\left(X_k \A_{ij}\right)\left(X_j z\right) \right)|s|^{p-3}s \phi^{2p}\\
&& - \int \left(X_i\A_{ij}\omega^l_{k,j}X_l z\right)|s|^{p-3}s \phi^{2p} - \int \omega^l_{k,i}\left(X_l \A_{ij} X_j z\right)|s|^{p-3}s \phi^{2p}
\end{eqnarray*}
and integrate by parts
\begin{eqnarray*}
J & = & (p-2)\int \left(X_k \A_{ij}\right)\left(X_j z \right)\left(X_i s\right)|s|^{p-3} \phi^{2p}\\
&& + 2p\int \left(X_k \A_{ij}\right)\left(X_j z \right)|s|^{p-3}s \left(X_i\phi\right)\phi^{2p-1} + \int m_i \left(X_k \A_{ij}\right)\left(X_j z \right) |s|^{p-3}s \phi^{2p}\\
&& + (p-2)\int \A_{ij}\omega^l_{k,j}\left(X_l z\right)\left(X_i s\right)|s|^{p-3} \phi^{2p}\\
&& + 2p\int \A_{ij}\omega^l_{k,j}\left(X_l z\right)|s|^{p-3}s \left(X_i\phi\right)\phi^{2p-1} + \int m_i \A_{ij}\omega^l_{k,j}\left(X_l z\right) |s|^{p-3}s \phi^{2p}\\
&& + (p-2)\int \A_{ij} \left(X_j z\right)\omega^l_{k,i}\left(X_l s\right)|s|^{p-3} \phi^{2p}\\
&& + 2p\int \A_{ij} \left(X_j z\right)\omega^l_{k,i}|s|^{p-3}s \left(X_l\phi\right)\phi^{2p-1} + \int m_l\omega^l_{k,i}\A_{ij} \left(X_j z\right) |s|^{p-3}s \phi^{2p}\\
&& + \int \A_{ij} \left(X_j z\right)\left(X_l\omega^l_{k,i}\right)|s|^{p-3}s \phi^{2p} - (p-2)\int f \left(X_k s\right)|s|^{p-3} \phi^{2p}\\
&& - 2p\int f |s|^{p-3}s \left(X_k \phi\right) \phi^{2p-1} - \int m_k f |s|^{p-3}s \phi^{2p}\ .
\end{eqnarray*}
We will now work to isolate terms of type $\int |\nabla_\epsilon z|^{p+1/2} \phi^{2p}$ or terms that can be reabsorbed into $I, J'$ or $J''$. Indeed $p+1/2$ is the power needed for the estimate as it results from Proposition \ref{PROPinterp1}, and
\begin{eqnarray}\label{tempJJ}
J' + J'' & = & \int |\nabla_\epsilon z|^{p-1} \left(\phi^2 + |\nabla \phi|^2\right) \phi^{2p-2}\nonumber\\
& \leq & \int |\nabla_\epsilon z|^{p+1/2} \phi^{2p} + \int \phi^{2p} + 2^p\int \left(\phi^2 + |\nabla \phi|^2\right)^p
\end{eqnarray}
so the other terms that appear in the statement result by pairing through Young inequality.

First we observe that there exists a positive constant $C_A$ depending only on $M$ such that
\begin{equation}\label{temp1}
\left|X_k \A_{ij}\right|\xi_i\eta_j \leq C_A|\nabla_\epsilon^2 u||\xi|\eta| \quad \forall \ \xi, \eta \in \R^2
\end{equation}
indeed
\begin{eqnarray*}
\left|X_k \A_{ij}\right| \leq |X_k a_{ij}| + a_{ij}|\nabla_\epsilon^2 u| \leq |X_kX_iuX_ju| + 3a_{ij}|\nabla_\epsilon^2 u|\ .
\end{eqnarray*}

Using (\ref{temp1}) and Young inequality we can estimate the first term by
\begin{eqnarray*}
\left|\int \left(X_k \A_{ij}\right)\left(X_j z \right)\left(X_i s\right)|s|^{p-3} \phi^{2p}\right| \leq C_A \int |\nabla_\epsilon^2 u||\nabla_\epsilon s||\nabla_\epsilon z||s|^{p-3} \phi^{2p}\\
\leq C_A\left[ \delta \frac{4}{(p-1)^2} I + \frac{1}{\delta} \left( \int |\nabla_\epsilon z|^{p+1/2}\phi^{2p} + \int |\nabla_\epsilon^2 u|^{(4p+2)/3}\phi^{2p} \right) \right]
\end{eqnarray*}
All the other terms can be treated analogously, making use of assumption (\ref{assCM}). We only turn our attention to the term that needs to be estimated with $Yu$, since it involves derivatives of the commutator coefficients:
\begin{eqnarray*}
\bigg|\!\!\!\!\!\!&\displaystyle\int&\!\!\!\!\!\!\A_{ij} \left(X_j z\right)\left(X_l\omega^l_{k,i}\right)|s|^{p-3}s \phi^{2p}\bigg| \ \leq \ \int |X\omega| |\nabla_\epsilon z|^{p-1}  \phi^{2p}\\
& \leq & \int |\nabla_\epsilon z|^{p+1/2}\phi^{2p} + \int |X \omega|^{(2p+1)/3} \phi^{2p}\\
& \leq & \int |\nabla_\epsilon z|^{p+1/2}\phi^{2p} + 2^{(2p-2)/3}\int |\nabla_\epsilon Y u|^{(2p+1)/3}\phi^{2p}\\
&& + 2^{(2p-2)/3}(|c^1_{2,3}|+|c^2_{2,3}|)^{(2p+1)/3}\left(\int |\nabla_\epsilon^2 u|^{(4p+2)/3}\phi^{2p} + \int \phi^{2p}\right)
\end{eqnarray*}
where we have indicated $X\omega = X_1 \omega^1 + X_2 \omega^2$, and the last transition holds since
\begin{eqnarray*}
|X\omega|^p & = & \left|\epsilon c^1_{2,3}X_1X_1 u + c^2_{2,3}X_2 X_1 u + \frac{1}{\epsilon }X_2 X_2 u\right|^p\\
& \leq & 2^{p-1}\bigg[ (|c^1_{2,3}|+|c^2_{2,3}|)^p |\nabla_\epsilon^2 u|^{p} + |X_2 Y u|^p \bigg]\ .
\end{eqnarray*}

Keeping only the higher power of $f$ in the last terms, we then end up with
\begin{eqnarray*}
I & \leq & C \left(\delta I + \frac{J' + J''}{\delta} + \int |\nabla_\epsilon z|^{p+1/2}\phi^{2p} + \int |\nabla_\epsilon^2 u|^{(4p+2)/3}\phi^{2p}\right.\\
&& \left. + \int |\nabla_\epsilon Y u|^{(2p+1)/3} \phi^{2p} + \int |f|^{(4p+2)/7}\phi^{2p} + \int \phi^{2p} + \int |\nabla_\epsilon \phi|^{2p} \right)
\end{eqnarray*}
where $C$ is a positive constant depending only on $p,M$. By (\ref{tempJJ}), for $\delta$ sufficiently small we end up with the desired claim.
\end{proof}

\begin{theo}\label{THEOCaccioppoli1}
Let $u$ be a function satisfying (\ref{assumption}). Let $z$ be a smooth solution of equation (\ref{divergence}) and $f$ be a given $\C^{\infty}(\Omega)$ function, then for any $p \geq 3$ there exists a positive constant $C$ depending only on $p,\Omega$ and the constant $M$ of (\ref{assumption}), hence independent of $\epsilon$, such that it holds
\begin{eqnarray*}
\|z\|_{W_\epsilon^{1,p+1/2}(\Omega)}^{p+1/2} + \| |\nabla_\epsilon z|^{(p-1)/2} \|_{W_\epsilon^{1,2}(\Omega)}^{2} & \leq & C_3\left(\|z\|_{L^{4p+2}(\Omega)}^{4p+2} + \|u\|_{W_\epsilon^{2,(4p+2)/3}(\Omega)}^{(4p+2)/3}\right.\\
&& \!\!\!\!\!\!\!\!\!\!\!\!\!\!\!\left. + \ \|Yu\|_{W_\epsilon^{1,(2p+1)/3}(\Omega)}^{(2p+1)/3} + \|f\|_{L^{(4p+2)/7}(\Omega)}^{(4p+2)/7} + 1\right)
\end{eqnarray*}
\end{theo}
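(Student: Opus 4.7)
The strategy is to couple Proposition \ref{PROPinterp1} with Lemma \ref{LEMcaccioppolisecond} through an absorption argument, and then to feed the resulting bound on $\int |\nabla_\epsilon z|^{p+1/2}\phi^{2p}$ back into the Caccioppoli inequality to recover the second-order quantity $\||\nabla_\epsilon z|^{(p-1)/2}\|_{W_\epsilon^{1,2}}^{2}$. Throughout, I would fix once and for all a cutoff $\phi \in \C^\infty_c(\Omega)$ identically equal to $1$ on the interior subdomain where the estimates are intended to hold, so that terms such as $\int(\phi^{2p} + |\nabla_\epsilon\phi|^{2p})$ can be absorbed into the final constant $C_3$ (which is why $C_3$ is allowed to depend on $\Omega$).

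Applying Proposition \ref{PROPinterp1} to $z$ and summing over $i=1,2$ yields
$$
\int |\nabla_\epsilon z|^{p+1/2}\phi^{2p} \;\leq\; \frac{C}{\delta}\Big(\int |z|^{4p+2}\phi^{2p} + 1\Big) + \delta \int \left|\nabla_\epsilon\!\big(|\nabla_\epsilon z|^{(p-1)/2}\big)\right|^{2}\phi^{2p}.
$$
Lemma \ref{LEMcaccioppolisecond} controls the last integral by $C_2$ times the sum
$$
\int |\nabla_\epsilon z|^{p+1/2}\phi^{2p} + \int |\nabla_\epsilon^2 u|^{(4p+2)/3}\phi^{2p} + \int |\nabla_\epsilon Yu|^{(2p+1)/3}\phi^{2p} + \int |f|^{(4p+2)/7}\phi^{2p} + 1.
$$
Substituting into the previous display and choosing $\delta$ so small that $\delta C_2 \leq 1/2$, the term $\int |\nabla_\epsilon z|^{p+1/2}\phi^{2p}$ appearing on the right is reabsorbed into the left side. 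This produces the desired bound on $\int |\nabla_\epsilon z|^{p+1/2}\phi^{2p}$ by the right-hand side of the theorem. Re-inserting this bound into Lemma \ref{LEMcaccioppolisecond} then immediately produces the corresponding estimate on $\||\nabla_\epsilon z|^{(p-1)/2}\|_{W_\epsilon^{1,2}}^{2}$.

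To complete the $W_\epsilon^{1,p+1/2}$ norm on the left, only the $L^{p+1/2}$-norm of $z$ itself needs to be bounded. Since $\Omega$ is bounded and $4p+2 > p+1/2$, H\"older's inequality gives $\|z\|_{L^{p+1/2}(\Omega)}^{p+1/2} \leq |\Omega|^{\alpha}\|z\|_{L^{4p+2}(\Omega)}^{p+1/2}$ for some $\alpha>0$, and Young's inequality then dominates this by $\|z\|_{L^{4p+2}(\Omega)}^{4p+2} + 1$ up to a constant depending only on $p$ and $\Omega$. Summing this with the two estimates derived above yields exactly the stated inequality.

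The main obstacle is to verify that the threshold $\delta$ used in the absorption, and hence the constant $C_3$, does not blow up as $\epsilon \to 0$. This reduces to checking that the constants appearing in Proposition \ref{PROPinterp1} and Lemma \ref{LEMcaccioppolisecond} are genuinely $\epsilon$-independent; assumption \eqref{assCM}, which gives a uniform bound on $m_1$, $\omega^2$ and on $m_2/\epsilon$, $\omega^1/\epsilon$, is precisely what makes this so, since all the factors of $\epsilon$ and $1/\epsilon$ generated by the commutators and adjoints cancel against the scalings built into $\nabla_\epsilon$ and into the structure coefficients $\omega^i_{j,k}$.
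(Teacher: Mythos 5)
Your argument is essentially identical to the paper's proof: combine Proposition \ref{PROPinterp1} with Lemma \ref{LEMcaccioppolisecond}, absorb the $\int|\nabla_\epsilon z|^{p+1/2}\phi^{2p}$ term for $\delta$ small (independent of $\epsilon$ thanks to (\ref{assCM})), and re-insert the result into the Caccioppoli inequality to recover $\||\nabla_\epsilon z|^{(p-1)/2}\|_{W_\epsilon^{1,2}}^2$. The only cosmetic difference is the treatment of the cutoff (you fix $\phi$ equal to $1$ on an interior subdomain, the paper lets a sequence of cutoffs tend to the characteristic function of $\Omega$) and your explicit H\"older bound for $\|z\|_{L^{p+1/2}}$, neither of which changes the substance.
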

\begin{proof}
Applying Lemma \ref{LEMcaccioppolisecond} to Proposition \ref{PROPinterp1} we get
\begin{eqnarray*}
\int && \!\!\!\!\!\!\!\!\!\!|\nabla_\epsilon z|^{p+1/2}\phi^{2p} \ \leq \ \frac{C}{\delta} \left(\int |z|^{4p+2}\phi^{2p} + 1 \right) + \delta \ C_2 \int |\nabla_\epsilon z|^{p+1/2}\phi^{2p}\\
&& \!\!\!\! + \ \delta \ C_2 \left(\int |\nabla_\epsilon^2 u|^{(4p+2)/3}\phi^{2p} + \int |\nabla_\epsilon Y u|^{(2p+1)/3} \phi^{2p} + \int |f|^{(4p+2)/7}\phi^{2p}  + 1 \right)
\end{eqnarray*}
so that for any $\delta < \displaystyle{\frac{1}{C_2}}$ we obtain
\begin{eqnarray*}
\int |\nabla_\epsilon z|^{p+1/2}\phi^{2p} & \leq & C_3 \left(\int |z|^{4p+2}\phi^{2p} + \int |\nabla_\epsilon^2 u|^{(4p+2)/3}\phi^{2p} \right.\\
&& \left. + \int |\nabla_\epsilon Y u|^{(2p+1)/3} \phi^{2p} + \int |f|^{(4p+2)/7}\phi^{2p}  + 1 \right)\ .
\end{eqnarray*}
By inserting this last expression into Lemma \ref{LEMcaccioppolisecond} we have the estimate
\begin{eqnarray*}
\int && \!\!\!\!\!\!\!\!\!\! |\nabla_\epsilon z|^{p+1/2}\phi^{2p} + \int|\nabla_\epsilon(|\nabla_\epsilon z|^{(p-1)/2})|^2\phi^{2p} \ \leq \ C_3 \left( \int|z|^{4p+2}\phi^{2p}\right. \\
&& \left. + \int |\nabla_\epsilon^2 u|^{(4p+2)/3}\phi^{2p} + \int |\nabla_\epsilon Yu|^{(2p+1)/3}\phi^{2p} + \int |f|^{(4p+2)/7}\phi^{2p} + 1 \right)\ .
\end{eqnarray*}
The statement of the theorem now follows by choosing a sequence of cutoff functions that converges to the characteristic function of the set $\Omega$.
\end{proof}

Iterating the previous theorem once, we obtain the following
estimates.
\begin{theo}
Let $u$ be a function satisfying (\ref{assumption}). Let $z$ be a smooth solution of equation (\ref{divergence}) and $f$ be a given $\C^{\infty}(\Omega)$ function, then for any $p \geq 3$ there exists a positive constant $C$ depending only on $p,\Omega$ and the constant $M$ of (\ref{assumption}) but independent of $\epsilon$ such that it holds
\begin{eqnarray*}
\|z\|_{W_\epsilon^{2,p+1/2}(\Omega)}^{p+1/2} \!\! & + & \!\! \| |\nabla_\epsilon^2 z|^{(p-1)/2} \|_{W_\epsilon^{1,2}(\Omega)}^{2} \ \leq \ C_4\left(\|z\|_{W_\epsilon^{1,4p+2}(\Omega)}^{4p+2} + \|u\|_{W_\epsilon^{2,(4p+2)/3}(\Omega)}^{(4p+2)/3}\right.\\
&& \!\!\!\!\!\!\!\!\!\!\! + \left. \|Yu\|_{W_\epsilon^{1,(2p+1)/3}(\Omega)}^{(2p+1)/3} + \|f\|_{W_\epsilon^{1,(4p+2)/7}(\Omega)}^{(4p+2)/7} + \|u\|_{W_\epsilon^{3,(2p+1)/3}(\Omega)}^{(2p+1)/3} + 1\right)\ .
\end{eqnarray*}
\end{theo}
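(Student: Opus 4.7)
The plan is to iterate Theorem \ref{THEOCaccioppoli1} one step. By Lemma \ref{LEMsecondderivatives}, if $z$ solves $M_{\epsilon,u}z = f$, then for each $k \in \{1,2\}$ the function $s = X_k z$ solves the same divergence-form linear equation
\[
M_{\epsilon,u} s = \rho_k, \qquad \rho_k = X_k f - (X_i X_k \A_{ij})(X_j z) - X_i\A_{ij}\omega^l_{k,j}X_l z - \omega^l_{k,i}X_l\A_{ij}X_j z .
\]
Applying Theorem \ref{THEOCaccioppoli1} with $z$ replaced by $s = X_k z$ and $f$ replaced by $\rho_k$ produces an estimate of $\|X_k z\|_{W_\epsilon^{1,p+1/2}(\Omega)}^{p+1/2} + \| |\nabla_\epsilon X_k z|^{(p-1)/2}\|_{W_\epsilon^{1,2}(\Omega)}^{2}$ in terms of $\|X_k z\|_{L^{4p+2}}^{4p+2}$, $\|u\|_{W_\epsilon^{2,(4p+2)/3}}^{(4p+2)/3}$, $\|Yu\|_{W_\epsilon^{1,(2p+1)/3}}^{(2p+1)/3}$, $\|\rho_k\|_{L^{(4p+2)/7}}^{(4p+2)/7}$ and $1$. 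Summing over $k\in\{1,2\}$ and adding to this the already known first-order estimate of Theorem \ref{THEOCaccioppoli1} applied directly to $z$, the left-hand side becomes exactly $\|z\|_{W_\epsilon^{2,p+1/2}(\Omega)}^{p+1/2} + \||\nabla_\epsilon^2 z|^{(p-1)/2}\|_{W_\epsilon^{1,2}(\Omega)}^{2}$, while the term $\|X_k z\|_{L^{4p+2}}^{4p+2}$ combined with $\|z\|_{L^{4p+2}}^{4p+2}$ is controlled by $\|z\|_{W_\epsilon^{1,4p+2}(\Omega)}^{4p+2}$, matching the first factor on the right-hand side of the statement.

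The one genuinely substantive step is the estimate of $\|\rho_k\|_{L^{(4p+2)/7}}^{(4p+2)/7}$, since this is where the additional $\|u\|_{W_\epsilon^{3,(2p+1)/3}}^{(2p+1)/3}$ term must come from. The first summand of $\rho_k$, namely $X_k f$, contributes $\|f\|_{W_\epsilon^{1,(4p+2)/7}}^{(4p+2)/7}$, which is part of the desired right-hand side. For the term $(X_iX_k\A_{ij})(X_j z)$ one differentiates (\ref{coefficients}) twice in the $X$-directions: since $\A_{ij}$ is a smooth function of $\nabla_\epsilon u$, the chain rule produces $|\nabla_\epsilon^3 u|$ together with polynomial expressions in $|\nabla_\epsilon^2 u|$ and bounded functions of $\nabla_\epsilon u$, and (\ref{assumption}) keeps the latter uniformly bounded. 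A Young-type splitting then separates the coefficient from $X_j z$, routing $|\nabla_\epsilon^3 u|$ into $\|u\|_{W_\epsilon^{3,(2p+1)/3}}^{(2p+1)/3}$, the $|\nabla_\epsilon^2 u|^2$ terms into $\|u\|_{W_\epsilon^{2,(4p+2)/3}}^{(4p+2)/3}$, and $X_j z$ into $\|z\|_{W_\epsilon^{1,4p+2}}^{4p+2}$; this is the same mechanism already employed in the proof of Lemma \ref{LEMcaccioppolisecond}, here applied one derivative higher on $\A_{ij}$.

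The remaining two summands of $\rho_k$ involve the commutator coefficients $\omega^l_{k,j}$, one derivative of $\A_{ij}$, and $X_l z$. Using (\ref{XA}) together with the pointwise bound (\ref{assCM}) for $\omega^l_{k,j}$, a Hölder splitting with exponents chosen as in Lemma \ref{LEMcaccioppolisecond} distributes $|\nabla_\epsilon^2 u|$ into $\|u\|_{W_\epsilon^{2,(4p+2)/3}}^{(4p+2)/3}$ and the gradient of $z$ into $\|z\|_{W_\epsilon^{1,4p+2}}^{4p+2}$; the $Yu$ content of $\omega$ contributes to $\|Yu\|_{W_\epsilon^{1,(2p+1)/3}}^{(2p+1)/3}$ exactly as in the proof of Lemma \ref{LEMcaccioppolisecond}. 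Absorbing bounded constants depending only on $p$, $\Omega$ and $M$ into a single constant $C_4$ yields the statement.

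The main technical obstacle is the bookkeeping of the exponents in the Young-inequality splittings of $\rho_k$: in particular, checking that the power $(4p+2)/7$ on $\rho_k$ is compatible with exponents $(2p+1)/3$ on $|\nabla_\epsilon^3 u|$ and $4p+2$ on $|\nabla_\epsilon z|$. This is a direct verification once the right triple of conjugate Hölder exponents is selected, but it is the place where the specific numerology of the statement is pinned down, and no new analytic idea is required beyond the two lemmas and the first iteration.
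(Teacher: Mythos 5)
Your overall route is the same as the paper's: apply Lemma \ref{LEMsecondderivatives} to $s=X_kz$, invoke Theorem \ref{THEOCaccioppoli1} with right-hand side $\rho_k$, and estimate $\|\rho_k\|_{L^{(4p+2)/7}}^{(4p+2)/7}$ by Young-type splittings; the splittings you describe for $|\nabla_\epsilon^3 u|\,|\nabla_\epsilon z|$ and for the $\omega$-terms are exactly the ones used there. However, there is a genuine gap in your treatment of $\rho_k$. In Lemma \ref{LEMsecondderivatives} the derivatives act on whole products: the computation in its proof shows that only $X_i(\A_{ij}X_jX_kz)=M_{\epsilon,u}s$ is removed, so the correct right-hand side is
$\rho_k = X_kf - X_i\bigl[(X_k\A_{ij})(X_jz)\bigr] - X_i\bigl[\A_{ij}\omega^l_{k,j}X_lz\bigr] - \omega^l_{k,i}X_l\bigl[\A_{ij}X_jz\bigr]$.
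Hence $\rho_k$ contains, besides the terms you list, the second-order contributions $(X_k\A_{ij})(X_iX_jz)$, $\A_{ij}\omega^l_{k,j}X_iX_lz$ and $\omega^l_{k,i}\A_{ij}X_lX_jz$, i.e.\ terms of size $|\nabla_\epsilon^2 u|\,|\nabla_\epsilon^2 z|$ and $|\nabla_\epsilon^2 z|$, as well as the $X_i\omega$ term which is what actually produces the $|\nabla_\epsilon Yu|$ contribution you allude to. You read the summands literally as ``second derivative of $\A$ times $X_jz$'' and ``one derivative of $\A_{ij}$ times $X_lz$'', so these second-order-in-$z$ terms never appear in your estimate.

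This omission is not cosmetic: $\| |\nabla_\epsilon^2 u|\,|\nabla_\epsilon^2 z| \|_{L^{(4p+2)/7}}^{(4p+2)/7}$ and $\| |\nabla_\epsilon^2 z| \|_{L^{(4p+2)/7}}^{(4p+2)/7}$ cannot be bounded by the norms on the right-hand side of the statement. They must be split as
$\frac{1}{\delta}\| \nabla_\epsilon^2 u \|_{L^{(4p+2)/3}}^{(4p+2)/3} + \delta \| \nabla_\epsilon^2 z \|_{L^{p+1/2}}^{p+1/2}$
(note $\frac{4p+2}{7}\cdot\frac{7}{4}=p+\frac12$, which is precisely why the left-hand exponent is $p+1/2$), and the $\delta$-piece must then be reabsorbed into the left-hand side $\|z\|_{W_\epsilon^{2,p+1/2}(\Omega)}^{p+1/2}$ by taking $\delta<1/C_3$. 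This absorption is the one genuinely delicate step of the iteration — it pins down the exponent structure you call bookkeeping — and it is exactly the step your writeup is missing; once you add it, your argument coincides with the paper's proof.
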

\begin{proof}
If we call $s = X_k z$ then by Lemma \ref{LEMsecondderivatives} $M_{\epsilon,u} s = \rho$, where $\rho$ is given by
$$
\rho = X_k f - X_i\left(X_k \A_{ij}\right)X_j z - X_i \A_{ij}\omega^l_{k,j}X_l z - \omega^l_{k,i}X_l \A_{ij} X_j z\ .
$$
From Theorem \ref{THEOCaccioppoli1} we have
\begin{eqnarray*}
\|s\|_{W_\epsilon^{1,p+1/2}(\Omega)}^{p+1/2} & + & \| |\nabla_\epsilon s|^{(p-1)/2} \|_{W_\epsilon^{1,2}(\Omega)}^{2} \ \leq \ C_3\left(\|s\|_{L^{4p+2}(\Omega)}^{4p+2} + \|u\|_{W_\epsilon^{2,(4p+2)/3}(\Omega)}^{(4p+2)/3}\right.\\
& + & \left. \|Yu\|_{W_\epsilon^{1,(2p+1)/3}(\Omega)}^{(2p+1)/3} + \|\rho\|_{L^{(4p+2)/7}(\Omega)}^{(4p+2)/7} + 1\right)
\end{eqnarray*}
where, using $\lambda = (4p+2)/7$ and denoting with $C$ a positive constant depending only on $p,M$ that may change from line to line
\begin{eqnarray*}
\|\rho\|_{L^{\lambda}(\Omega)}^{\lambda} & \leq & C \left( \|X_k f\|_{L^{\lambda}(\Omega)}^{\lambda} + \|X_i\left(X_k \A_{ij}\right)X_j z\|_{L^{\lambda}(\Omega)}^{\lambda} \right.\\
&& \left. + \|X_i \A_{ij}\omega^l_{k,j}X_l z\|_{L^{\lambda}(\Omega)}^{\lambda} + \|\omega^l_{k,i}X_l \A_{ij} X_j z\|_{L^{\lambda}(\Omega)}^{\lambda}\right)\\
& \leq & C \left( \|f\|_{W_\epsilon^{1,\lambda}(\Omega)}^{\lambda} + \| |\nabla_\epsilon^3 u| |\nabla_\epsilon z| \|_{L^{\lambda}(\Omega)}^{\lambda} + \| |\nabla_\epsilon^2 u| |\nabla_\epsilon^2 z| \|_{L^{\lambda}(\Omega)}^{\lambda}\right.\\
&& \left. \| |\nabla_\epsilon^2 u| |\nabla_\epsilon z| \|_{L^{\lambda}(\Omega)}^{\lambda} + \| |\nabla_\epsilon Y u| |\nabla_\epsilon z| \|_{L^{\lambda}(\Omega)}^{\lambda} + \| |\nabla_\epsilon^2 z|\|_{L^{\lambda}(\Omega)}^{\lambda}\right)\ .
\end{eqnarray*}
Again, by Young inequality we can estimate each of these terms by
\begin{eqnarray*}
\| |\nabla_\epsilon^3 u| |\nabla_\epsilon z| \|_{L^{(4p+2)/7}(\Omega)}^{(4p+2)/7} & \leq & \| |\nabla_\epsilon^3 u| \|_{L^{(2p+1)/3}(\Omega)}^{(2p+1)/3} + \| |\nabla_\epsilon z| \|_{L^{4p+2}(\Omega)}^{4p+2}\\
\| |\nabla_\epsilon^2 u| |\nabla_\epsilon^2 z| \|_{L^{(4p+2)/7}(\Omega)}^{(4p+2)/7} & \leq & \frac{1}{\delta}\| |\nabla_\epsilon^2 u| \|_{L^{(4p+2)/3}(\Omega)}^{(4p+2)/3} + \delta\| |\nabla_\epsilon^2 z| \|_{L^{p+1/2}(\Omega)}^{p+1/2}\\
\| |\nabla_\epsilon^2 u| |\nabla_\epsilon z| \|_{L^{(4p+2)/7}(\Omega)}^{(4p+2)/7} & \leq & \| |\nabla_\epsilon^2 u| \|_{L^{(4p+2)/3}(\Omega)}^{(4p+2)/3} + \| |\nabla_\epsilon z| \|_{L^{4p+2}(\Omega)}^{4p+2} + c\\
\| |\nabla_\epsilon Y u| |\nabla_\epsilon z| \|_{L^{(4p+2)/7}(\Omega)}^{(4p+2)/7} & \leq & \| |\nabla_\epsilon Y u| \|_{L^{(2p+1)/3}(\Omega)}^{(2p+1)/3} + \| |\nabla_\epsilon z| \|_{L^{4p+2}(\Omega)}^{4p+2}\\
\| |\nabla_\epsilon^2 z|\|_{L^{(4p+2)/7}(\Omega)}^{(4p+2)/7} & \leq & \delta\| |\nabla_\epsilon^2 z| \|_{L^{p+1/2}(\Omega)}^{p+1/2} + \frac{c}{\delta}
\end{eqnarray*}
that proves the statement for $\delta < \displaystyle{\frac{1}{C_3}}$
\end{proof}

Iterating this last result we can then obtain Theorem \ref{THEOiterations}.

\section{A priori estimates II: approximation with a H\"ormander-type operator}

In this section we provide a new estimate of Sobolev type for
solutions of the equation (\ref{elliptic}), using the fact that the
equation can represented in the nondivergence form
\begin{equation}\label{nondivergence}
N_{\epsilon,u} u = a_{ij}(\nabla_\epsilon u)X_{i,u}X_{j,u} u = 0
\end{equation}
where the coefficients $a_{ij}$ are given by (\ref{coefficients}).

We first linearize this equation, \ie we apply the operator
$N_{\epsilon,u}$ to a sufficiently regular arbitrary function $z$.
This leads to
\begin{equation}\label{linearized}
N_{\epsilon,u} z = a_{ij}(\nabla_\epsilon u)X_{i,u}X_{j,u} z = 0\ .
\end{equation}

The aim of this section is to obtain  some a priori bounds of
Sobolev norms for solutions $z$ to the linearized equation
(\ref{linearized}). In particular we will need to estimate $L^p$
norm of second order derivatives of the function $z$ in the
direction of the vector fields $X_1, X_2$, which do not satisfy a
H\"ormander type condition uniform in $\e$. Hence we will
approximate these vector fields with $\C^{\infty}$ vector fields
which, together with their commutators of order 2, do not span the
space. Since at the moment we are not interested to higher
regularity of the solution, we do not require that the new vector
fields approximate the given one up to the third derivative, and we
can require, at the opposite, that they span the space at step 3.
Hence they can satisfy a Sobolev type inequality, which will be used to
estimate second order derivatives of the solution of the given
operator.

\begin{theo}\label{THEOfreesing}
Let $z$ be a classical solution of equation $N_{\epsilon,u}z = 0$, then
\begin{itemize}
\item[i)] if there exists a constant $\alpha \in ]0,1[$ and a constant $p > 10/3$ such that for any compact $K \subset\subset \Omega$ there exists a positive constant $C$ such that
$$
\|u\|_{\C_\epsilon^{1,\alpha}(K)} + \|Y z\|_{L^p(K)} + \|Y z\|_{W_\epsilon^{1,2}(K)} + \|z\|_{W_\epsilon^{2,2}(K)} \leq C
$$
then for any compact set $K_1 \subset\subset K$ there exists a positive constant $C_1$ depending on $K$, $C$ and $M$ such that
$$
z \in W_\epsilon^{2,10/3}(K_1)\ ;
$$
\item[ii)] if, in addition, there exists a positive constant $C'$ such that
$$
\|Y X_k z\|_{L^4(K)} \leq C'
$$
and $\alpha \geq 1/4$, then for every $p > 1$
$$
z \in W_\epsilon^{2,p}(K_1)\ .
$$
\end{itemize}
\end{theo}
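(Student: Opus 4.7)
The plan is to carry out the freezing-and-lifting procedure announced in the introduction. At each base point $\bar x \in K_1$ I replace the degenerate (in $\epsilon$) vector fields $X_{1,u}, X_{2,u}$ by smooth approximants $\tilde X_1^{\bar x}, \tilde X_2^{\bar x}$ defined on a cylinder $B_{\bar x} \times I \subset \R^3$ obtained by adding an auxiliary variable $t$. After a rotation in $\R^2$ aligning $X_{1,u}(\bar x)$ and $X_{2,u}(\bar x)$ with $\partial_{x_1}$ and $\partial_{x_2}$, I take a Grushin-type lift of the form $\tilde X_1^{\bar x} = \partial_{x_1}$, $\tilde X_2^{\bar x} = \partial_{x_2} + q(x_1-\bar x_1)\partial_t$, where $q$ is a quadratic polynomial vanishing to order two at the origin. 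Then $[\tilde X_1^{\bar x}, \tilde X_2^{\bar x}](\bar x) = 0$ while $[\tilde X_1^{\bar x}, [\tilde X_1^{\bar x}, \tilde X_2^{\bar x}]](\bar x) = q''(0)\partial_t$ does not vanish, so the approximating pair is H\"ormander of step $3$ with homogeneous dimension $Q = 1+1+3 = 5$ at $\bar x$. The point of forcing step $3$ (rather than step $2$) is that the first-order commutator of the $\tilde X_i^{\bar x}$'s vanishes at $\bar x$, so second derivatives along the lifted fields coincide with those along the original $X_{i,u}$ up to a lower-order error which can be reabsorbed.

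Next, extending $z$ and $u$ trivially in the $t$-variable, the linearised equation $a_{ij}(\nabla_\epsilon u) X_{i,u} X_{j,u} z = 0$ can be rewritten, after freezing the coefficients at $\bar x$, as
\begin{equation*}
a_{ij}(\nabla_\epsilon u(\bar x))\,\tilde X_i^{\bar x} \tilde X_j^{\bar x} \tilde z = F_{\bar x},
\end{equation*}
where $F_{\bar x}$ gathers the freezing error $\bigl(a_{ij}(\nabla_\epsilon u)-a_{ij}(\nabla_\epsilon u(\bar x))\bigr)\nabla_\epsilon^2 z$, pointwise bounded by $|x-\bar x|^\alpha \|u\|_{\C_\epsilon^{1,\alpha}(K)}\,|\nabla_\epsilon^2 z|$, together with a contribution from the discrepancy $\tilde X_i^{\bar x}-X_{i,u}$ involving the transverse derivative $Y z$. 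Since the approximating pair is now H\"ormander of step $3$, the classical subelliptic Sobolev embedding (of Nagel-Stein-Wainger / Jerison type) applies and gives $W^{1,2}_{\tilde X}(\tilde B) \hookrightarrow L^{10/3}(\tilde B)$ with $10/3 = 2Q/(Q-2)$ for $Q = 5$. Combining this inequality with a Caccioppoli-type bound for the frozen linear equation and with the given norms on $\|Yz\|_{W_\epsilon^{1,2}}$, $\|Yz\|_{L^p}$ and $\|z\|_{W_\epsilon^{2,2}}$, I obtain an $L^{10/3}$ bound on $\nabla_\epsilon^2 z$ on $B_{\bar x}$; a finite cover of $K_1$ by such base points proves (i).

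For (ii), the additional bound $\|Y X_k z\|_{L^4}\leq C'$ promotes the error $F_{\bar x}$ to $L^q$ for some $q>2$, and the H\"older threshold $\alpha\geq 1/4$ is exactly what is needed to absorb the factor $|x-\bar x|^\alpha$ into the Caccioppoli term when iterating. A bootstrap in the exponent, alternating the subelliptic Sobolev embedding with a sharper Caccioppoli estimate at each stage, then raises the integrability of $\nabla_\epsilon^2 z$ through every $p<\infty$. The main obstacle will be the construction of the lift itself: it has to satisfy the step-$3$ H\"ormander condition uniformly in $\epsilon$ and, simultaneously, approximate $X_{i,u}$ closely enough that $F_{\bar x}$ can really be controlled by $\|u\|_{\C_\epsilon^{1,\alpha}}$, $\|Yz\|_{L^p}$ and $\|Y X_k z\|_{L^4}$. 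Once this geometric step is settled, the analytic part (Caccioppoli plus subelliptic Sobolev plus bootstrap) follows a fairly standard pattern.
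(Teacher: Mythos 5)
Your overall plan (freeze the coefficients at a base point, lift to a step-three H\"ormander structure of homogeneous dimension $5$, and push the freezing error together with the field discrepancy to the right-hand side) is the same as the paper's, but the analytic engine you propose does not run under the stated hypotheses. You claim an $L^{10/3}$ bound on $\nabla_\epsilon^2 z$ from ``a Caccioppoli-type bound for the frozen linear equation'' combined with the subelliptic embedding $W^{1,2}\hookrightarrow L^{2Q/(Q-2)}=L^{10/3}$; but to apply that embedding to \emph{second} derivatives you need \emph{third} derivatives in $L^2$, i.e.\ you must differentiate the equation, and this is impossible here because the coefficients $a_{ij}(\nabla_\epsilon u)$ are only H\"older continuous (the hypothesis gives $u\in\C^{1,\alpha}_\epsilon$, nothing more). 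The paper's proof avoids any differentiation of the coefficients: it writes a representation formula for $z\phi$ in terms of the fundamental solution $\Go_\epsilon$ of the frozen lifted operator, uses the uniform-in-$\epsilon$ kernel bounds of Theorem \ref{TEOCittiManfredini}, applies two frozen derivatives at the pole, and estimates the worst term (three derivatives of $\Go_\epsilon$ against the field discrepancy) by a kernel of order $\alpha$ acting on $|\nabla_\epsilon z|+|Yz|$. The integrability gain then comes from the fractional-integration estimate of Proposition \ref{PROPYoung}, $r=5q/(5-\alpha q)$ with $q>10/3$ supplied by the hypothesis on $Yz$ --- not from the Sobolev exponent $2Q/(Q-2)$, which is a numerical coincidence; this is also how $\alpha\ge 1/4$ and $\|YX_kz\|_{L^4}$ enter in part (ii), by iterating the same representation formula.

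The geometric step you yourself flag as ``the main obstacle'' is indeed unresolved in your sketch, and it is exactly where the paper's construction differs from yours. Your Grushin lift places a surrogate of the transverse direction in the first layer: either $\tilde X_2$ approximates $X_{2,u}=\epsilon Y$, and then the step-three H\"ormander structure degenerates as $\epsilon\to 0$ (the commutators producing $\partial_t$ carry powers of $\epsilon$, so no uniform kernel or Sobolev estimates are available), or $\tilde X_2$ approximates the unscaled $Y$, and then the discrepancy $\tilde X_2-X_{2,u}$ is of order one and your error $F_{\bar x}$ contains second-order transverse terms of type $Y^2z$, which no hypothesis controls (only $Yz\in L^p$ and $\nabla_\epsilon Yz\in L^2$ are assumed). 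The paper dissolves this tension by taking as generators $\Xo_1=P_{x_0}\sigma_1^i\,\p_{x_i}+s^2\Yo$ and $\Xo_3=\p_s$, i.e.\ the fresh variable itself is a generator and the \emph{unscaled} transverse field appears only in the third layer; then Lemma \ref{LEMd1alpha} and Remark \ref{REMdiff} give $|X_iz-\Xo_iz|\le \de_\epsilon^{1+\alpha}\,(|\nabla_\epsilon z|+|Yz|)$ --- precisely because the correction is $s^2$ and not $s$ --- which is the only kind of discrepancy the assumed norms can absorb.
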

We have denoted by $\C_\epsilon^{1,\alpha}$ the class of functions whose $X_{1,u}$ and $X_{2,u}$ derivatives are H\"older continuous of order $\alpha$. They are in particular Euclidean H\"older continuous of order $(1+\alpha)$, but the H\"older constant is not independent on $\epsilon$.

\subsection{Lifting and freezing}
We introduce the new vector fields
\begin{equation}\label{lifted}
\tx = X_1 + s^2 Y \ \ , \ \ty = X_2 \ \ , \ \ts = \p_s
\end{equation}
such that
\begin{displaymath}
[\ts,\tx] = \frac{2s}{\epsilon} \ty, \quad [\ts,[\ts,\tx]] = \frac{2}{\epsilon}\ty\ .
\end{displaymath}
The related exponential coordinates based at $(x_0,0)$, which we will call $(\tilde{e}_1,\tilde{e}_2,\tilde{e}_3)$, are defined as
\begin{equation}\label{expcoords}
(x_1,x_2,s) = \exp_{(x_0,0)}\big(\tilde{e}_1\tx+\tilde{e}_2\ty+\tilde{e}_3\ts\big)
\end{equation}
that is to say, given a curve $\gamma(t)$ such that $\gamma(0) = (x_0,0)$, whose derivative is given by the above vector field, \ie $\dot{\gamma}(t) = \tilde{e}_1\tx(\gamma(t))+\tilde{e}_2\ty(\gamma(t))+\tilde{e}_3\ts(\gamma(t))$, then the exponential coordinates in (\ref{expcoords}) satisfy $(x_1,x_2,s) = \gamma(1)$. They can be obtained by the following system
$$
\left\{\begin{array}{rcl}
\dot{x}_1 & = & \tilde{e}_1 \sigma_1^1 + \big( \tilde{e}_1 s^2 + \epsilon\tilde{e}_2\big)\sigma_2^1\vspace{0.2cm}\\
\dot{x}_2 & = & \tilde{e}_1 \sigma_1^2 + \big( \tilde{e}_1 s^2 + \epsilon\tilde{e}_2\big)\sigma_2^2\vspace{0.2cm}\\
\dot{s} & = & \tilde{e_3}\ .
\end{array}\right.
$$
Integrating we have
$$
\left\{\begin{array}{rcl}
s & = & \tilde{e_3}\vspace{0.2cm}\\
(x-x_0)_i & = & \tilde{e}_1 I_1^i + \tilde{e}_1 s^2 J^i + \epsilon\tilde{e}_2 I_2^i
\end{array}\right.
$$
where
\begin{eqnarray*}
I_j^i & = & \int_0^1 dt \ \sigma_j^i(x(t),u(x(t)))\\
J^i & = & \int_0^1 dt \ t^2 \sigma_2^i(x(t),u(x(t)))
\end{eqnarray*}
and the system
$$
\left\{\begin{array}{rcl}
\tilde{e}_1 I_1^1 + \tilde{e}_1 s^2 J^1 + \epsilon \tilde{e}_2 I_2^1 & = & (x-x_0)_1\vspace{0.2cm}\\
\tilde{e}_1 I_1^2 + \tilde{e}_1 s^2 J^2 + \epsilon \tilde{e}_2 I_2^2 & = & (x-x_0)_2
\end{array}\right.
$$
is solved implicitly by
\begin{equation}\label{expccord1}
\left\{\begin{array}{rcl}
\tilde{e}_1 & = & \displaystyle{\frac{(x-x_0)_1I_2^2 - (x-x_0)_2I_2^1}{(I_1^1 I_2^2 - I_1^2 I_2^1) + s^2 (J^1I_2^2 - J^2I_2^1)}}\vspace{0.2cm}\\
\epsilon\tilde{e}_2 & = & \displaystyle{\frac{(x-x_0)_2(I_1^1 + s^2 J^1) - (x-x_0)_1(I_1^2 + \epsilon s^2 J^2)}{(I_1^1 I_2^2 - I_1^2 I_2^1) + s^2 (J^1I_2^2 - J^2I_2^1)}}
\end{array}\right.
\end{equation}
where rank condition (\ref{rank}) ensures the well posedness of this solution, since $I_1^1 I_2^2 \neq I_1^2 I_2^1$.

We now introduce a freezing for the vector fields ${\tilde X}_j$. With the short-hand notation
$$
\Sigma_j^k = \frac{\sigma_j^k(x_0)}{\sigma_1^1(x_0) \sigma_2^2(x_0) - \sigma_1^2(x_0) \sigma_2^1(x_0)}
$$
we start approximating the exponential coordinates (\ref{expccord1}) in the following way
$$
\left\{\begin{array}{rcl}
\eo_1 & = & \displaystyle{\Sigma_2^2(x-x_0)_1 - \Sigma_2^1(x-x_0)_2}\vspace{0.2cm}\\
\epsilon \eo_2 & = & \displaystyle{\Sigma_1^1(x-x_0)_2 - \Sigma_1^2(x-x_0)_1 }
\end{array}\right.
$$
and define an analogue of the first order Taylor polynomial of a function $h:\Omega \rightarrow \R$
\begin{eqnarray}\label{1sttaylor}
P_{x_0}h (x) & = & h(x_0) + \eo_1(x)\tx h(x_0,0) + \eo_2(x) \ty h (x_0,0)\nonumber\\
& = & h(x_0) + \eo_1(x)X_1 h(x_0) + \eo_2(x) X_2 h (x_0)\ .
\end{eqnarray}
The freezed vector fields then read
\begin{equation}\label{liftfreezedvf}
\Xo_1 = P_{x_0}\sigma_1^i(x)\p_{x_i} + s^2 \Yo \ \ , \ \Xo_2 = \epsilon P_{x_0}\sigma_2^i(x)\p_{x_i} \ \ , \ \Xo_3 = \p_s
\end{equation}
where
$$
\Yo = P_{x_0}\sigma_2^i(x)\p_{x_i}\ .
$$
These are H\"ormander vector fields defining a step three stratified Lie algebra, whose first layer is spanned by $\{\Xo_1,\Xo_3\}$. We will denote by $\de$ the corresponding Carnot-Carath\'eodory distance, by $\de_{\epsilon}$ the full Riemannian distance and by $\Bo_\epsilon$ the balls relative to $\de_\epsilon$. Notice also that the homogeneous dimension of the space $(\R^3,\de)$ is 5.

\begin{rem}
It is well known that sub-Riemannian structures can be obtained with a limiting procedure from Riemannian counterparts \cite{Bellaiche}. More precisely, in this case we have that $(\R^3,\de_\epsilon)$ converge as a metric space in the Hausdorff-Gromov sense to $(\R^3,\de)$ as $\epsilon$ goes to zero. In particular for any $\xi, \eta \in \Omega \times (-1.1)$ we have that $\de_\epsilon(\xi,\eta) \to \de(\xi,\eta)$, that implies that given a threshold $\epsilon_0$ sufficiently small, there exists a positive constant $C=C(\epsilon_0)$ depending only on it such that for all $0 < \epsilon < \epsilon_0$ it holds
$$
|\Bo_\epsilon\big((x_0,0),R\big)| \geq C R^5\ .
$$
This means that for small $\epsilon$ the volume growth for the Riemannian metric measure space can be considered the same as for the sub-Riemannian one.
\end{rem}

\begin{lem}\label{LEMd1alpha}
If $h:\Omega\rightarrow\R$ is a $\C_E^{1,\alpha}$, \ie $h$ belongs to the Euclidean H\"older class $(1,\alpha)$, then there exists an $\epsilon_0 > 0$, a neighborhood $U$ of $x_0$ and a positive constant $C$ depending on $\epsilon_0$, $U$ but not on $\epsilon$ such that
\begin{equation}\label{d1alpha}
|h(x) - P_{x_0}h(x)| \leq C \de_\epsilon^{1+\alpha}(x,x_0)
\end{equation}
for all $x$ in $U$ and all $\epsilon < \epsilon_0$.
\end{lem}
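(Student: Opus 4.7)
The plan is to reduce the statement to the standard Euclidean Hölder estimate together with a comparison between Euclidean and Riemannian distance. The main observation is that the polynomial $P_{x_0}h$ introduced in \eqref{1sttaylor} is in fact the ordinary Euclidean first order Taylor polynomial of $h$ at $x_0$, so that once this identification is made the statement becomes almost immediate.

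To see this, note that since $h$ depends only on $(x_1,x_2)$ and the base point is $(x_0,0)$, one has $\tx h(x_0,0)=X_1 h(x_0)=\sigma_1^i(x_0)\p_{x_i}h(x_0)$ and $\ty h(x_0,0)=X_2 h(x_0)=\epsilon\,\sigma_2^i(x_0)\p_{x_i}h(x_0)$. Substituting into \eqref{1sttaylor} gives
$$
P_{x_0}h(x) - h(x_0) \;=\; \bigl[\eo_1\,\sigma_1^i(x_0) + \epsilon\,\eo_2\,\sigma_2^i(x_0)\bigr]\p_{x_i}h(x_0),
$$
and plugging in the explicit formulas for $\eo_1$ and $\epsilon\eo_2$ together with $\Sigma_j^k=\sigma_j^k(x_0)/(\sigma_1^1\sigma_2^2-\sigma_1^2\sigma_2^1)(x_0)$, a direct computation shows that the bracketed quantity equals $(x-x_0)_i$ for $i=1,2$. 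Hence $P_{x_0}h(x)=h(x_0)+(x-x_0)_i\,\p_{x_i}h(x_0)$, and the Euclidean $\C^{1,\alpha}$ hypothesis immediately gives
$$
|h(x)-P_{x_0}h(x)| \;\le\; [\nabla h]_\alpha\,|x-x_0|_E^{1+\alpha}
$$
on any bounded neighborhood $U$ of $x_0$.

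It remains to compare $|x-x_0|_E$ with $\de_\epsilon$. Fix $\epsilon_0$ as in the Hausdorff--Gromov remark and consider any $g_\epsilon$-unit-speed curve $\gamma(t)=(x(t),s(t))$ joining $(x_0,0)$ to $(x,0)$ in the lifted space, written $\dot\gamma = v_1\Xo_1+v_2\Xo_2+v_3\Xo_3$ with $v_1^2+v_2^2+v_3^2 = 1$. Since each $P_{x_0}\sigma_j^i$ is affine in $x$, it is uniformly bounded on $U\times(-1,1)$, so from \eqref{liftfreezedvf}
$$
|\dot x(t)|_E \;\le\; |v_1|\,\|P_{x_0}\sigma_1\|_\infty + s(t)^2\,|v_1|\,\|P_{x_0}\sigma_2\|_\infty + \epsilon|v_2|\,\|P_{x_0}\sigma_2\|_\infty \;\le\; C
$$
with $C$ independent of $\epsilon\le\epsilon_0$. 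Integrating along a minimizing curve yields $|x-x_0|_E\le C\,\de_\epsilon(x,x_0)$, and combining with the Hölder bound of the previous paragraph produces \eqref{d1alpha}.

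The main obstacle here is conceptual rather than technical: one must recognize that the freezing in \eqref{liftfreezedvf} and the definition of $\eo_1,\eo_2$ were engineered precisely so that $P_{x_0}h$ reproduces the Euclidean Taylor polynomial. After that identification is made, the lifting plays no further role in the estimate, and the inequality comes directly from $\C^{1,\alpha}_E$ regularity of $h$ plus uniform boundedness on $U$ of the affine coefficients entering the freezed vector fields.
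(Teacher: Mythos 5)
Your proposal is correct, but it proves the key estimate by a genuinely different and much shorter route than the paper. The paper's own proof also begins by reducing (\ref{d1alpha}) to the Euclidean statement (\ref{claimd1alpha}) together with the comparison $d_E(x,x_0)\leq C\,\de_\epsilon(x,x_0)$ (which it only asserts, while you justify it by bounding the Euclidean speed of $g_\epsilon$-unit-speed curves written in the frame (\ref{liftfreezedvf}) — a clean and valid argument, uniform in $\epsilon\leq\epsilon_0$ since the $\epsilon$ in $\Xo_2$ only helps). Where you diverge is in the proof of (\ref{claimd1alpha}): the paper runs a lengthy argument through the exponential coordinates $e_i$ of the frozen lifted fields, the mean value theorem along their integral curves, and the error terms $N_1,N_{12},N_2$; you instead observe that, because $\eo_1,\eo_2$ are the linear functions obtained by Cramer's rule from $\Sigma_j^k$, the combination $\eo_1 X_1h(x_0)+\eo_2 X_2h(x_0)$ collapses exactly to $(x-x_0)_i\p_{x_i}h(x_0)$ (the $\epsilon$ in $X_2$ cancels against the $1/\epsilon$ in $\eo_2$), so $P_{x_0}h$ in (\ref{1sttaylor}) \emph{is} the Euclidean first-order Taylor polynomial and (\ref{claimd1alpha}) is the standard $\C_E^{1,\alpha}$ Taylor estimate. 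I checked the cancellation and it is correct, so your identification trivializes the core of the paper's computation. What each approach buys: yours is far more transparent and makes explicit that the constant depends only on $\|h\|_{\C_E^{1,\alpha}}$, the local bounds on the frozen coefficients, and $\epsilon_0$; the paper's intrinsic computation does not exploit the exact algebraic cancellation and would therefore survive a variant of the construction in which $P_{x_0}$ is defined through genuine exponential coordinates of the frozen fields (as in the Heisenberg antecedent \cite{CCM1}), at the price of considerable extra work for the lemma as actually stated. Two minor points you should make explicit: the statement's $\de_\epsilon(x,x_0)$ is to be read as $\de_\epsilon((x,0),(x_0,0))$ (or $((x,s),(x_0,0))$, for which your projection argument still gives the bound), and the curves used in the distance comparison should be confined to a fixed bounded neighborhood $U\times(-1,1)$ so that the affine coefficients $P_{x_0}\sigma_j^i$ are bounded there; both are routine localizations also glossed over in the paper.
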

\begin{proof}

We will first prove that
\begin{equation}\label{claimd1alpha}
|h(x) - P_{x_0}h(x)| \leq C d_E^{1+\alpha}(x,x_0)
\end{equation}
where $d_E$ stands for the Euclidean distance. Indeed this is sufficient for having (\ref{d1alpha}), since by the hypothesis of $u$ being $\C_E^{1,\alpha}$, in a sufficiently small neighborhood of $x_0$ the Euclidean distance is controlled by the Riemannian one with constant depending on the coefficients that define the vector fields (\ref{liftfreezedvf}), \ie the derivatives of $u$. Moreover, since $\Xo_1$ contains the unscaled direction $\Yo$, once $\epsilon$ is fixed under a given threshold $\epsilon_0$ it does not interfere anymore on the constant.

Let us first calculate exponential coordinates $(e_1,e_2,e_3)$ with respect to the vector fields (\ref{liftfreezedvf}), in a neighborhood of $(x_0,0)$. They are then given by
\begin{equation}\label{expcoord2}
(x_1,x_2,s) = \exp_{(x_0,0)}\big(e_1\Xo_1 + e_2\Xo_2 + e_3\Xo_3\big)
\end{equation}
that is
$$
\left\{\begin{array}{rcl}
e_1 & = & \displaystyle{\frac{(x-x_0)_1\Io_2^2 - (x-x_0)_2\Io_2^1}{(\Io_1^1 \Io_2^2 - \Io_1^2 \Io_2^1) + s^2 (\Jo^1\Io_2^2 - \Jo^2\Io_2^1)}}\vspace{0.2cm}\\
\epsilon e_2 & = & \displaystyle{\frac{(x-x_0)_2(\Io_1^1 + s^2 \Jo^1) - (x-x_0)_1(\Io_1^2 + s^2 \Jo^2)}{(\Io_1^1 \Io_2^2 - \Io_1^2 \Io_2^1) + s^2 (\Jo^1\Io_2^2 - \Jo^2\Io_2^1)}}
\end{array}\right.
$$
where
\begin{eqnarray*}
\Io_j^k & = & \int_0^1 dt \ P_{x_0}\sigma_j^k(x(t),u(x(t)))\\
& = & \sigma_j^k(x_0,u(x_0)) + X_i \sigma_j^k(x_0,u(x_0)) \int_0^1 dt \ \eo_i(x(t))\\
\Jo^i & = & \int_0^1 dt \ t^2 P_{x_0}\sigma_2^i(x(t),u(x(t)))\ .
\end{eqnarray*}

We now define in $\Omega$ an integral curve of (\ref{liftfreezedvf}), connecting $x_0$ with $x$
$$
\gamma(t) = \exp_{x_0}\left(t\big(e_1(x)\Xo_1 + e_2(x)\Xo_2\big)\right)
$$
so that, by Mean Value Theorem, for all $t \in (0,1)$ there exists a $t^* \in (0,t)$ such that
$$
h(\gamma(t)) - h(x_0) = e_1(x)\Xo_1h(\gamma(t^*)) + e_2(x)\Xo_2h(\gamma(t^*))\ .
$$
Thus, for all $t \in (0,1)$ we have
\begin{eqnarray*}
h(\gamma(t)) - P_{x_0}h(\gamma(t)) \!\!\!\!\!& = &\!\!\!\!\! h(\gamma(t)) - h(x_0) - \eo_1(\gamma(t))\tx h(x_0,0) - \eo_2(\gamma(t)) \ty h (x_0,0)\\
\!\!\!\!\!& = &\!\!\!\!\! e_i(x)\Xo_i h(\gamma(t^*)) - \eo_i(\gamma(t))\tilde{X}_i h(x_0,0)\\
\!\!\!\!\!& = &\!\!\!\!\! e_i(x) \big[ \Xo_i h(\gamma(t^*)) - \Xo_i h(x_0) \big] + \ e_i(x) \big[ \Xo_i h(x_0)\\
&&\!\!\!\! - \tilde{X}_i h(x_0,0) \big] + \ \big[ e_i(x) - \eo_i(\gamma(t)) \big] \tilde{X}_i h(x_0,0)
\end{eqnarray*}
where in particular the second term is zero since $\Xo_i h(x_0) = \tilde{X}_i h(x_0,0)$. For the first term, then, we observe that
\begin{eqnarray*}
|\Xo_1 h(\gamma(t^*)) - \Xo_1 h(x_0)| & \leq & C d_E^\alpha(x,x_0)\\
|\Xo_2 h(\gamma(t^*)) - \Xo_2 h(x_0)| & \leq & \epsilon C d_E^\alpha(x,x_0)
\end{eqnarray*}
because of the hypothesis of $h$ being $\C^{1,\alpha}$ and the smoothness of the coefficients, so that the constant $C$ depends also on the neighborhood of $x_0$ chosen. Then
$$
|e_i(x)| \ \big| \Xo_i h(\gamma(t^*)) - \Xo_i h(x_0) \big| \leq C d_E^{1+\alpha}(x,x_0)\ .
$$
For the third term, after introducing the notation
$$
\IIo_j^k = \frac{\Io_j^k}{\Io_1^1 \Io_2^2 - \Io_1^2 \Io_2^1}
$$
we have
$$
\left\{\begin{array}{rcl}
e_1(x,0) & = & \displaystyle{\IIo_2^2(x-x_0)_1 - \IIo_2^1(x-x_0)_2}\vspace{0.2cm}\\
\epsilon e_2(x,0) & = & \displaystyle{\IIo_1^1(x-x_0)_2 - \IIo_1^2(x-x_0)_1 }
\end{array}\right.
$$
so that
\begin{eqnarray*}
|e_1(x,0) - \eo_1(\gamma(t))| & = & \left| \IIo_2^2(x-x_0)_1 - \IIo_2^1(x-x_0)_2\right.\\
&& \left. \ - \ \Sigma_2^2(\gamma(t)-x_0)_1 + \Sigma_2^1(\gamma(t)-x_0)_2 \right|\\
& \leq & \left| \IIo_2^2(x-x_0)_1 - \Sigma_2^2(\gamma(t)-x_0)_1 \right|\\
&& \ + \ \left| \IIo_2^1(x-x_0)_2 - \Sigma_2^1(\gamma(t)-x_0)_2 \right|\\
\epsilon |e_2(x,0) - \eo_2(\gamma(t))| & = & \left| \IIo_1^1(x-x_0)_2 - \IIo_1^2(x-x_0)_1  \right.\\
&& \left. \ - \ \Sigma_1^1(\gamma(t)-x_0)_2 - \Sigma_1^2(\gamma(t)-x_0)_1 \right|\\
& \leq & \left| \IIo_1^1(x-x_0)_2 - \Sigma_1^1(\gamma(t)-x_0)_2 \right|\\
&& \ + \ \left| \IIo_1^2(x-x_0)_1 - \Sigma_1^2(\gamma(t)-x_0)_1 \right|\ .
\end{eqnarray*}
The generic term to control is then
$$
\Delta_{ij}^{\phantom{i}k} = \left| (x-x_0)_i \IIo_j^k - (\gamma(t)-x_0)_i \Sigma_j^k \right|\ .
$$
Explicitly, setting $D_1 = \Io_1^1 \Io_2^2 - \Io_1^2 \Io_2^1$ and $D_2 = \sigma_1^1(x_0) \sigma_2^2(x_0) - \sigma_1^2(x_0) \sigma_2^1(x_0)$
\begin{eqnarray*}
\Delta_{ij}^{\phantom{i}k} & = & \left| (x-x_0)_i \frac{\int_0^1 d\tau \ P_{x_0}\sigma_j^k(\gamma(\tau))}{D_1} - (\gamma(t)-x_0)_i \frac{\sigma_j^k(x_0)}{D_2}\right|\\
& \leq & \big|(x-x_0)_i\big|\left|\frac{\int_0^1 d\tau \ \big(P_{x_0}\sigma_j^k(\gamma(\tau)) - \sigma_j^k(\gamma(\tau))\big)}{D_1}\right|\\
&& + \ \left|(x-x_0)_i\int_0^1d\tau\sigma_j^k(\gamma(\tau))\right|\left|\frac{D_1 - D_2}{D_1D_2}\right|\\
&& + \ \left|\frac{(x-x_0)_i\int_0^1d\tau\sigma_j^k(\gamma(\tau)) - (\gamma(t)-x_0)_i\sigma_j^k(x_0)}{D_2}\right|\\
& = & \frac{N_1}{D_1} + \frac{N_{12}}{D_1D_2} + \frac{N_2}{D_2} \leq C (N_1 + N_{12} + N_2)\ .
\end{eqnarray*}
Where the last transition holds since rank condition (\ref{rank}) ensures that the $D_i$'s do not vanish, so that they can be bounded from below uniformly on compacts, and $C$ is a constant depending on the compact set. Moreover
$$
N_{12} \leq C d_E^{2}(x,x_0)
$$
indeed
\begin{eqnarray*}
N_{12} & \leq & \left|(x-x_0)_i\int_0^1 d\tau\sigma_j^k(\gamma(\tau))\right| \left( \left|X_i\left(\sigma_1^1 \sigma_2^2 - \sigma_1^2 \sigma_2^1\right)(x_0)\int_0^1d\tau \eo_i(\gamma(\tau))\right|\right.\\
&&\!\!\!\!\!\!\!\!\! + \left. \left|(X_i\sigma_1^1) (X_j\sigma_2^2) \int_0^1d\tau \eo_i \int_0^1d\tau \eo_j - (X_i\sigma_1^2) (X_j\sigma_2^1) \int_0^1d\tau \eo_i \int_0^1d\tau \eo_j\right|\right)
\end{eqnarray*}
where
$$
\left|(x-x_0)_i\int_0^1 d\tau\sigma_j^k(\gamma(\tau))\right| \leq C d_E(x,x_0)
$$
since the functions $\sigma_j^k$ are smooth, so they and their derivatives are locally bounded, and
$$
\left|\int_0^1d\tau \eo_1(\gamma(\tau))\right| \leq \int_0^1d\tau \left|\Sigma_2^2(\gamma(\tau)-x_0)_1 - \Sigma_2^1(\gamma(\tau)-x_0)_2\right| \leq C d_E(x,x_0)
$$
and the same holds for $\eo_2$, where the $1/\epsilon$ in the integrand is compensated by the $\epsilon$ in the vector field.

Writing for $N_1$ and $N_2$
\begin{eqnarray*}
N_1 & \leq & d_E(x,x_0) \int_0^1 d\tau \ \left|\sigma_j^k(\gamma(\tau)) - P_{x_0}\sigma_j^k(\gamma(\tau))\right|\\
N_2 & \leq & \int_0^1 d\tau \left|(x-x_0)_i\sigma_j^k(\gamma(\tau)) - (\gamma(t)-x_0)_i\sigma_j^k(x_0)\right|
\end{eqnarray*}
we obtain
\begin{eqnarray}\label{freezing1}
|h(\gamma(t)) \!\!\!\! & - & \!\!\!\!P_{x_0}h(\gamma(t))| \ \leq \ C \bigg(d_E^{1+\alpha}(x,x_0)\\
& + & \sum_{ijk}\int_0^1 d\tau \left|(x-x_0)_i\sigma_j^k(\gamma(\tau)) - (\gamma(t)-x_0)_i\sigma_j^k(x_0)\right| \nonumber\\
& + & d_E(x,x_0)\sum_{jk} \int_0^1 d\tau \ \left|\sigma_j^k(\gamma(\tau)) - P_{x_0}\sigma_j^k(\gamma(\tau))\right|\bigg)\ . \nonumber
\end{eqnarray}
Now if we set $t = 1$, inequality (\ref{freezing1}) reads
\begin{eqnarray*}
|h(x) \!\!\!\! & - & \!\!\!\!P_{x_0}h(x)| \ \leq \ C \bigg(d_E^{1+\alpha}(x,x_0)\\
& + & d_E(x,x_0)\sum_{jk}\int_0^1 d\tau \left|\sigma_j^k(\gamma(\tau)) - \sigma_j^k(x_0)\right| \\
& + & d_E(x,x_0)\sum_{jk}\int_0^1 d\tau \ \left|\sigma_j^k(\gamma(\tau)) - P_{x_0}\sigma_j^k(\gamma(\tau))\right|\bigg)\\
& \leq & C \bigg(d_E^{1+\alpha}(x,x_0) + d_E(x,x_0)\sum_{jk}\int_0^1 d\tau \ \left|\sigma_j^k(\gamma(\tau)) - P_{x_0}\sigma_j^k(\gamma(\tau))\right|\bigg)
\end{eqnarray*}
so that in order to complete the proof we need only to prove the following claim
$$
\left|\sigma_j^k(\gamma(\tau)) - P_{x_0}\sigma_j^k(\gamma(\tau))\right| \leq d_E(x,x_0)\ .
$$
To do so, we make use of inequality (\ref{freezing1}) in the case $h(x)=\sigma_\alpha^\beta(x)$, where for $x$ sufficiently close to $x_0$ the last term at the right hand side can be reabsorbed, so the claim follows by noting that
\begin{eqnarray*}
\left|(x-x_0)_i\sigma_j^k(\gamma(\tau)) - (\gamma(t)-x_0)_i\sigma_j^k(x_0)\right| & \leq & \left|(x-x_0)_i\right| \left|\sigma_j^k(\gamma(\tau)) - \sigma_j^k(x_0)\right|\\
&& + \left|(x-\gamma(t))_i\right| \left|\sigma_j^k(x_0)\right|\\
& \leq & C d_E(x,x_0)\ .
\end{eqnarray*}
This proves (\ref{claimd1alpha}) and hence the full statement.
\end{proof}

\begin{rem}\label{REMdiff}
The difference between the frozen vector fields and the original ones are expressed by
\begin{eqnarray*}
X_1 - \Xo_1 & = & \left(\sigma_1^i(x) - P_{x_0}\sigma_1^i(x) - s^2\sigma_2^i(x)\right)\p_{x_i}\\
X_2 - \Xo_2 & = & \epsilon\left(\sigma_2^i(x) - P_{x_0}\sigma_2^i(x)\right)\p_{x_i}\ .
\end{eqnarray*}
In view of Lemma \ref{LEMd1alpha}, and because of the choice of lifting up to step three, we get the estimate
\begin{eqnarray}\label{difference}
|X_i z - \Xo_i z| & \leq & \de_\epsilon^{1+\alpha}((x,s),(x_0,0))|\nabla_E z|\nonumber\\
& \leq & \de_\epsilon^{1+\alpha}((x,s),(x_0,0))(|\nabla_\epsilon z| + |Y z|)\ .
\end{eqnarray}
Indeed if we had $s$ instead of $s^2$ that would not hold true. This will be a key step for concluding the proof of Theorem \ref{THEOfreesing}.
\end{rem}

\subsection{The fundamental solution of the frozen operator}
If we extend to three dimensions the $2\times 2$ matrix $a_{ij}(\nabla_\epsilon u(x_0))$ to a $3 \times 3$ matrix keeping the same form (\ref{coefficients}) but with respect to the lifted vector fields (\ref{lifted}), \ie we define
$$
\ao_{ij} = \ao_{ij}(\tilde\nabla u(x_0,0)) = \delta_{ij} - \frac{\tilde{X_i}u(x_0,0)\tilde{X_j}u (x_0,0)}{1+|\tilde\nabla u(x_0,0)|^2}
$$
then with the new vector fields (\ref{liftfreezedvf}) we can define a uniformly subelliptic operator acting on a smooth function $z$ generally defined on $\Omega\times(-1,1)$
\begin{equation}\label{No}
\No z \doteq \sum_{i,j \in \{1,3\}} \ao_{ij}\Xo_i\Xo_j z
\end{equation}
that, due to the uniform ellipticity of the matrix $\ao$, behaves like a sublaplacian with respect to the corresponding stratified Lie algebra. Its elliptic regularization contains also the vector field $\Xo_2$ and reads
\begin{equation}\label{Noe}
\No_\epsilon z \doteq \sum_{i,j = 1}^3 \ao_{ij}\Xo_i\Xo_j z\ .
\end{equation}
Operator (\ref{Noe}) will be used in this section to approximate the original operator in nondivergence form (\ref{nondivergence}). The main advantage in this approximation consists of the fact that the limit operator for (\ref{Noe}) is subelliptic, while the limit operator for (\ref{nondivergence}) is not.

We will make use of the fundamental solution to (\ref{Noe}), which will be denoted by $\Go_\epsilon(\xi|\eta)$. Its variables $\xi,\eta$ are then in $\Omega\times(-1,1)$. For the sake of simplicity we will also make use of the following notation
\begin{eqnarray*}
(\Go_\epsilon \ast f) (\xi) = \int \Go_\epsilon(\xi|\eta) f(\eta)d\eta
\end{eqnarray*}
and when a derivation is applied to $\Go_\epsilon(\xi|\eta)$, we will mean it is made with respect to its first variable.

The following estimates on the fundamental solution are proved in \cite{CMa}.
\begin{theo}\label{TEOCittiManfredini}
Let $\Go_\epsilon(\xi|\eta)$ be the fundamental solution of (\ref{Noe}) and $x_0 \in \Omega$. Then for all compact sets $K \subset \subset \Omega \times (-1,1)$, for all $k \in \N$ and every multiindex $I = (i_1, \dots , i_k)$ with $i_j \in \{1,2,3\}$ there exists a positive constant $C$ independent of $\epsilon$ such that
\begin{displaymath}
|\no_\epsilon^I \Go_\epsilon(\xi|\eta)| \leq C \frac{\de_\epsilon^{2-k}(\xi,\eta)}{|\Bo_\epsilon(\xi,\de_\epsilon(\xi,\eta))|}
\end{displaymath}
\end{theo}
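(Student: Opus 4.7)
The plan is to treat $\No_\epsilon$ as a uniformly elliptic operator whose natural geometry is Riemannian at scale $\epsilon$ but degenerates to the step-3 sub-Riemannian geometry of $\{\Xo_1,\Xo_3\}$ as $\epsilon \to 0$, and to produce heat-kernel based bounds on $\Go_\epsilon$ with constants that are uniform in $\epsilon$. Concretely, I would build Gaussian estimates for the heat semigroup $e^{t\No_\epsilon}$, then recover $\Go_\epsilon$ by time integration, and finally extract the derivative bounds from Caccioppoli and Moser arguments applied on balls of radius comparable to $\de_\epsilon(\xi,\eta)$.

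First I would verify the two geometric inputs that make the parabolic theory run uniformly in $\epsilon$: a doubling property $|\Bo_\epsilon(\xi,2R)|\le C_D|\Bo_\epsilon(\xi,R)|$ and an $L^2$ Poincar\'e inequality on metric balls, both with constants depending only on $M$. These follow from the Nagel--Stein--Wainger ball-box theorem applied to $\{\Xo_1,\Xo_2,\Xo_3\}$, in which the $\epsilon$ dependence enters only through the rescaling $\Xo_2=\epsilon P_{x_0}\sigma_2^i\p_{x_i}$, while the H\"ormander structure of the pair $\{\Xo_1,\Xo_3\}$ is $\epsilon$-independent. The remark preceding the theorem already records the key volume lower bound $|\Bo_\epsilon((x_0,0),R)|\ge CR^5$ that expresses this uniformity in the small-$\epsilon$ regime.

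With these inputs, a Moser (or Nash/Varopoulos) iteration on $\p_t-\No_\epsilon$ yields the on-diagonal bound $p_\epsilon(t,\xi,\xi)\le C/|\Bo_\epsilon(\xi,\sqrt{t})|$, and Davies' exponential perturbation method upgrades this to the off-diagonal Gaussian estimate
$$p_\epsilon(t,\xi,\eta)\le \frac{C}{|\Bo_\epsilon(\xi,\sqrt{t})|}\exp\!\left(-c\,\frac{\de_\epsilon^2(\xi,\eta)}{t}\right),$$
uniformly in $\epsilon$. Splitting $\Go_\epsilon(\xi,\eta)=\int_0^\infty p_\epsilon(t,\xi,\eta)\,dt$ at $t=\de_\epsilon^2(\xi,\eta)$ and using doubling to compare $|\Bo_\epsilon(\xi,\sqrt{t})|$ with $|\Bo_\epsilon(\xi,\de_\epsilon(\xi,\eta))|$ yields the stated bound for $k=0$. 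For $k\ge 1$ I would exploit that $\Go_\epsilon(\cdot,\eta)$ is $\No_\epsilon$-harmonic away from $\eta$: uniform interior Caccioppoli inequalities on $\Bo_\epsilon(\xi,\de_\epsilon(\xi,\eta)/2)$, combined with Moser's $L^\infty$ bound, let one trade each horizontal derivative for a factor $\de_\epsilon^{-1}(\xi,\eta)$, which iterated $k$ times produces the claimed $\de_\epsilon^{2-k}$ scaling.

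The main obstacle is the uniformity of every constant in $\epsilon$. Taken naively, the doubling constant, the Poincar\'e constant, the Moser iteration constants and the Davies perturbation all blow up as $\epsilon\to 0$ since $\No_\epsilon$ loses uniform Riemannian ellipticity in the limit. The technical heart of the argument, carried out in \cite{CMa}, is therefore to absorb all $\epsilon$ dependencies into the geometric quantities $\de_\epsilon$ and $|\Bo_\epsilon|$ themselves, using the H\"ormander condition on the limit pair $\{\Xo_1,\Xo_3\}$ to guarantee that the sub-Riemannian geometry dictates the correct asymptotic control and that only the doubling constant of the limit space enters the final bounds.
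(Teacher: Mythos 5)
The first thing to say is that the paper does not prove this statement at all: Theorem \ref{TEOCittiManfredini} is imported verbatim from \cite{CMa} ("The following estimates on the fundamental solution are proved in \cite{CMa}"), so the paper's "proof" is a citation. Your outline is therefore not comparable step-by-step with anything in the paper, and in the end you do the same thing the paper does, since your last paragraph concedes that "the technical heart of the argument" — uniformity of every constant as $\epsilon\to 0$ — "is carried out in \cite{CMa}". If the intention was simply to justify quoting the result, that is fine; if the intention was to supply a proof, the part you defer is the entire content of the theorem, because for each fixed $\epsilon$ the bound is classical subelliptic theory and the only issue is $\epsilon$-independence.

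Beyond that, one step of your sketch would not close as stated: the passage from the $k=0$ estimate to derivatives of arbitrary order $k$ by "trading each horizontal derivative for a factor $\de_\epsilon^{-1}$" via Caccioppoli and Moser. Caccioppoli gives $L^2$ control of one extra gradient on a smaller ball, and Moser gives an $L^\infty$ bound on the solution itself; to reach pointwise bounds on $\no_\epsilon^I\Go_\epsilon$ for every multiindex (the paper actually uses third derivatives $\Xo_k\Xo_l\Xo_i\Go_\epsilon$ in the proof of Theorem \ref{THEOfreesing}) you must differentiate the equation repeatedly, and the commutators this produces are not harmless in the $\epsilon$-frame: for the frozen fields $[\Xo_1,\Xo_3]=-\frac{2s}{\epsilon}\Xo_2$, so the lower-order terms carry $1/\epsilon$ coefficients and the naive iteration does not give constants independent of $\epsilon$. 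What makes the higher-order estimates work in \cite{CMa} is the additional structure of the frozen operator — constant matrix $\ao_{ij}$ and vector fields generating a stratified, homogeneous (step-three) algebra, so that dilation/scaling and Schauder-type subelliptic estimates, or a parametrix built on the group, replace the Caccioppoli--Moser trade. Your heat-kernel route (uniform doubling and Poincar\'e, Gaussian upper bounds via Davies, time integration) is a legitimate alternative for the $k=0$ bound, but for $k\geq 1$ you need an argument of the scaling/parametrix type, not just energy inequalities at scale $\de_\epsilon(\xi,\eta)$.
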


\begin{prop}\label{PROPYoung}
Let $f$ be in $L^q(\Omega)$ and extend it to a function on $\Omega \times (-1,1)$ by setting $f(x,s) \doteq f(x)$. Let $\mathcal{K}:[\Omega \times (-1,1)]^2 \to \R$ be a kernel satisfying
\begin{equation}\label{kernel}
|\mathcal{K}(\xi,\eta)| \leq C \frac{\de_\epsilon^{\kappa}(\xi,\eta)}{|\Bo_\epsilon(\xi,\de_\epsilon(\xi,\eta))|}\ .
\end{equation}
Then a function $u(\xi)$ defined as
$$
u(\xi) = \int \mathcal{K}(\xi,\eta)f(\eta)d\eta
$$
and such that $u(x,s) = u(x)$ satisfies
$$
\|u\|_{L^r(\Omega)} \leq C \|f\|_{L^q(\Omega)}
$$
with $5-\kappa q > 0$ and $r = \frac{5q}{5-\kappa q}$.
\end{prop}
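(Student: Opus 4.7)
The plan is to identify the statement as a Hardy--Littlewood--Sobolev fractional integration inequality on the space of homogeneous type $(\R^3,\de_\epsilon,dxds)$, and then pull it back to $\Omega$ using the hypothesis that both $f$ and $u$ are independent of the $s$ variable.

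First, invoking the volume estimate in the Remark preceding the statement, together with the doubling property associated with the H\"ormander vector fields $\Xo_1,\Xo_2,\Xo_3$, one obtains on compact subsets of $\Omega\times(-1,1)$ the two-sided bound
$$
|\Bo_\epsilon(\xi,\de_\epsilon(\xi,\eta))| \ \sim\ \de_\epsilon(\xi,\eta)^{5}
$$
uniformly in $\epsilon<\epsilon_0$. The kernel estimate (\ref{kernel}) then simplifies to $|\mathcal{K}(\xi,\eta)|\leq C\,\de_\epsilon(\xi,\eta)^{\kappa-5}$, exhibiting $\mathcal{K}$ as a fractional kernel of order $\kappa$ on a doubling space of homogeneous dimension $5$.

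Second, I would establish the operator-norm bound
$$
\|TF\|_{L^r(\Omega\times(-1,1))}\leq C\,\|F\|_{L^q(\Omega\times(-1,1))}, \qquad TF(\xi)=\int \mathcal{K}(\xi,\eta)F(\eta)\,d\eta ,
$$
for every $F\in L^q$, by the standard Hedberg argument for spaces of homogeneous type. Writing $TF(\xi)=I_1+I_2$, where $I_1$ is the integral over $\de_\epsilon(\xi,\eta)\leq t$ and $I_2$ the complementary piece, a dyadic decomposition together with the doubling property gives $|I_1|\leq C t^{\kappa}\mathcal{M}F(\xi)$, where $\mathcal{M}$ denotes the Hardy--Littlewood maximal function; H\"older's inequality combined with $|\Bo_\epsilon(\xi,r)|\gtrsim r^{5}$ gives $|I_2|\leq C t^{\kappa-5/q}\|F\|_{q}$, where the bound is finite precisely when $\kappa q<5$. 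Optimizing over $t$ yields the pointwise Hedberg inequality
$$
|TF(\xi)|\leq C\,\|F\|_{q}^{\kappa q/5}\,\mathcal{M}F(\xi)^{\,1-\kappa q/5} ,
$$
and raising to the $r$-th power with $r=5q/(5-\kappa q)$ turns the exponent of $\mathcal{M}F$ into $q$; the $L^q$ boundedness of $\mathcal{M}$ for $q>1$ then closes the estimate, with constants controlled only by the doubling constant of $\de_\epsilon$, hence uniform in $\epsilon<\epsilon_0$.

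Finally, applying this bound to the extension $F(x,s)\doteq f(x)$, Fubini on $s\in(-1,1)$ gives $\|F\|_{L^q(\Omega\times(-1,1))}=2^{1/q}\|f\|_{L^q(\Omega)}$, and, since $u(x,s)=u(x)$ by hypothesis, $\|u\|_{L^r(\Omega\times(-1,1))}=2^{1/r}\|u\|_{L^r(\Omega)}$; the statement follows. The only delicate point is the uniformity in $\epsilon$ of the volume comparison and the doubling constant of $\de_\epsilon$: this is precisely what the preceding Remark supplies, and is ultimately why the lifting in (\ref{lifted}) was designed with $s^{2}$ rather than $s$, so that the frozen algebra is step three with homogeneous dimension exactly $5$.
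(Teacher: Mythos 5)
Your route is genuinely different from the paper's. The paper settles this proposition in a few lines: the kernel bound, combined with the lower volume estimate of the Remark preceding Lemma \ref{LEMd1alpha}, shows that $\mathcal{K}(\xi,\cdot)$ lies in $L^s$ uniformly in $\epsilon$ for every $s<5/(5-\kappa)$, and a Young-type inequality for kernels then gives $L^q\to L^r$ boundedness with $1/s+1/q=1+1/r$, hence for every $r$ \emph{strictly} below $5q/(5-\kappa q)$. Your Hedberg splitting with the maximal function is the Hardy--Littlewood--Sobolev proof on a space of homogeneous type; it costs more work but it actually reaches the endpoint exponent $r=5q/(5-\kappa q)$ stated in the proposition (the subcritical range delivered by the paper's sketch is all that is needed in the proof of Theorem \ref{THEOfreesing} anyway, since there $q>10/3$ already forces $r>10/3$). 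The final Fubini reduction in the $s$ variable is the same in both arguments and is fine.

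One assertion in your first step is wrong, and you must make sure it does not contaminate the rest: the two-sided comparison $|\Bo_\epsilon(\xi,\de_\epsilon(\xi,\eta))|\sim \de_\epsilon^{5}(\xi,\eta)$ does \emph{not} hold uniformly in $\epsilon$ (nor even for fixed $\epsilon$ as the radius shrinks). The Remark supplies only the lower bound $|\Bo_\epsilon(\xi,R)|\geq CR^5$; for $R\lesssim\sqrt{\epsilon}$ the metric is in its Riemannian regime and $|\Bo_\epsilon(\xi,R)|\sim\epsilon R^3\gg R^5$, so the space is not Ahlfors $5$-regular uniformly. Consequently you cannot first simplify the kernel to $\de_\epsilon^{\kappa-5}$ and then run Hedberg: with that kernel the near part
$$
|I_1|\ \leq\ \sum_{j\geq 0}(2^{-j}t)^{\kappa-5}\int_{\Bo_\epsilon(\xi,2^{-j}t)}|F|
$$
needs the (false) upper volume bound to produce $t^{\kappa}\mathcal{M}F(\xi)$, and the same issue reappears when you sum the annulus measures in the tail. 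The repair is to keep the kernel in its original form $\de_\epsilon^{\kappa}/|\Bo_\epsilon(\xi,\de_\epsilon)|$: in $I_1$ the ball volume in the denominator turns each dyadic piece into an average of $|F|$ over a ball, so uniform doubling of $(\R^3,\de_\epsilon,dx\,ds)$ alone yields $|I_1|\leq Ct^{\kappa}\mathcal{M}F(\xi)$; in $I_2$, after H\"older, raise the denominator to the power $q'$, absorb one power into the annulus measure by doubling, and bound the remaining $q'-1$ powers from below by $(2^jt)^{5(q'-1)}$ via the Remark, which gives $|I_2|\leq Ct^{\kappa-5/q}\|F\|_q$ exactly as you claim, under $5-\kappa q>0$. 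With that adjustment your optimization in $t$ and the $L^q$ boundedness of the maximal operator (constant depending only on the doubling constant, hence uniform for $\epsilon<\epsilon_0$; note also that $q>1$ is needed here, which is harmless since $q>10/3$ in the application) close the argument correctly.
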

The idea is that, by the homogeneity of the measure on stratified groups, the requirement (\ref{kernel}) implies that $\mathcal{K}$ is in $L^s$ with $s < \frac{Q}{Q-\kappa}$ where $Q$ is the homogeneous dimension of the group, in this case $Q = 5$. An argument like the standard Young inequality for convolutions implies then $\|u\|_r \leq C \|\mathcal{K}\|_s\|f\|_q$ with $1/s + 1/q = 1 + 1/r$, so $r < \frac{Qq}{Q - \kappa q}$.

For the sake of clarity, we reproduce here the main arguments of the proof, addressing the reader to \cite{CiPaPo} and \cite{CCM1} for the full detailed computation of representation formulae.

\begin{proof}[Proof of Theorem \ref{THEOfreesing}]
Let $z$ be a classical solution to $N_{\epsilon,u}z = 0$, hence if considered as a function on $\Omega\times(-1,1)$ it will not have dependence on the $s$ variable, \ie $z(\xi) = z(x_0,0)$.
Using the convention of summation over repeated indices, by definition of fundamental solution we have
\begin{eqnarray*}
z\phi & = & \Go_\epsilon \ast \No_\epsilon z \phi\\
& = & \Go_\epsilon \ast \left( z\No_\epsilon \phi + 2\ao_{ij}(\Xo_iz)(\Xo_j\phi)\right) + \Go_\epsilon \ast \left(\No_\epsilon z - N_\epsilon z\right) \phi
\end{eqnarray*}
where the term involving the difference between the freezed and the original operator, since $z$ does not depend on $s$, is
\begin{eqnarray*}
\No_\epsilon z - N_\epsilon z & = & \ao_{ij}\Xo_i\Xo_j z - a_{ij}(\nabla_\epsilon u)X_iX_j z\\
& = & (\ao_{ij}-a_{ij}(\nabla_\epsilon u)) X_iX_j z + \ao_{ij}(\Xo_i\Xo_j z - X_iX_j z)
\end{eqnarray*}
and in particular the last term can be written as
\begin{displaymath}
\Xo_i\Xo_j z - X_iX_j z = (\Xo_i - X_i) X_j z + \Xo_i (\Xo_j - X_j) z\ .
\end{displaymath}
We then obtain the representation formula
\begin{eqnarray}\label{repr}
z\phi & = & \Go_\epsilon \ast \Big( z\No_\epsilon \phi + 2\ao_{ij}(\Xo_iz)(\Xo_j\phi)\Big) + \Go_\epsilon \ast  \Big(\phi(\ao_{ij}-a_{ij}(\nabla_\epsilon u)) X_iX_j z\Big)\nonumber\\
&& + \Go_\epsilon \ast \Big(\phi\ao_{ij}(\Xo_i - X_i) X_j z\Big) + \Go_\epsilon \ast \Big(\ao_{ij}(\Xo_i \phi) (\Xo_j - X_j)z\Big)\nonumber\\
&& + \Xo_i \Go_\epsilon \ast \Big(\ao_{ij} \phi (\Xo_j - X_j) z \Big)\ .
\end{eqnarray}
To get a $W^{2,p}$ estimate of $z$ we perform second derivatives on the above representation formula. Arguing as in \cite{CCM1}, we can represent frozen second derivatives at the freezing point $x_0$, \ie $\Xo_k\Xo_l z\phi (x_0,0)$, simply by putting the derivatives on $\Go_\epsilon$, hence obtaining a representation formula similar to the above one but with $\Xo_k\Xo_l \Go ((x_0,0),\eta)$. Proceeding in this direction, the worst term to estimate is then the one corresponding to the last term in (\ref{repr}), since it contains third derivatives of $\Go_\epsilon$. Making use of Remark (\ref{REMdiff}), it becomes
\begin{eqnarray*}
\int \!\!\!\!\!\! && \!\!\!\!\!\!\left| \Xo_k \Xo_l \Xo_i \Go_\epsilon ((x_0,0)|\eta) \Big(\ao_{ij} \phi (\Xo_j - X_j) z (\eta) \Big)\right| d\eta\\
& \leq & C \int \mathcal{K}((x_0,0),\eta)(|\nabla_\epsilon z(\eta)| + |Y z(\eta)|)  \phi d\eta\\
\end{eqnarray*}
where by (\ref{difference}) and Theorem \ref{TEOCittiManfredini}
$$
\mathcal{K} = \left| \Xo_k \Xo_l \Xo_i \Go_\epsilon ((x_0,0)|\eta)\right| \de_\epsilon^{1+\alpha}(x,x_0) \leq C \frac{\de_\epsilon^{\alpha}((x_0,0),\eta)}{|\Bo_\epsilon((x_0,0),\de_\epsilon((x_0,0),\eta))|}\ .
$$

Applying Proposition \ref{PROPYoung} to this term and handling the others in the same way, we get the boundedness of the function $x_0 \rightarrow |\Xo_k \Xo_l z(x_0)|$ in the $L^r$ norm, with $r = \frac{5q}{5 - \alpha q}$ where $q$ is by hypothesis bigger than $10/3$, so also $r > 10/3$.

\end{proof}

\section{Uniform Sobolev regularity for the nonlinear approximating PDE}

In this section we unify the results obtained up to now, coming back to the nonlinear regularized equation (\ref{elliptic}). We will prove that

\begin{theo}\label{THEOmainreg}
Let $u$ be a smooth solution to $L_{\epsilon,u}u=0$ in $\Omega \subset \R^2$, then for every open set $\Omega_1 \Subset \Omega$, every $p \geq 3$ and every $m \geq 2$ there exists a positive constant $C$ depending on $p$, $m$, $\Omega_1$ and the constant $M$ of (\ref{assumption}) but is independent of $\epsilon$ such that
\begin{displaymath}
\|u\|_{W_\epsilon^{m,p}(\Omega_1)} + \|Y u\|_{W_\epsilon^{m,p}(\Omega_1)} \leq C\ .
\end{displaymath}
\end{theo}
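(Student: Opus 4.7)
The plan is to proceed by induction on $m$, using Theorem~\ref{THEOfreesing} from Section~4 to set up the base case $m=2$ and Theorem~\ref{THEOiterations} from Section~3 to carry out the inductive step. At every stage one must treat $u$ and the transverse derivative $v=Yu$ in parallel, because they enter each other's Caccioppoli and iteration estimates: $u$ itself solves the nondivergence form equation $N_{\epsilon,u}u=0$, each horizontal derivative $z=X_k u$ solves a divergence form equation $M_{\epsilon,u}z=f$ by Lemma~\ref{LEMfirstderivatives} with $|f|\leq C(|\nabla_\epsilon u|+|\nabla_\epsilon^2 u|)$ from Corollary~\ref{CORfz}, while $v=Yu$ solves $M_{\epsilon,u}v=f_v$ by Corollary~\ref{CORfv} with $|f_v|\leq C(1+|\nabla_\epsilon^2 u|+|\nabla_\epsilon v|)$.

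For the base case $m=2$, I would first extract preliminary $W^{2,2}_\epsilon$ bounds on $u$ and $W^{1,2}_\epsilon$ bounds on $Yu$ by applying the Caccioppoli estimate of Lemma~\ref{LEMcaccioppolifirst} to $z=X_k u$ and $z=Yu$, in both cases reabsorbing the second-order contribution from $f$ into the left-hand side via the $L^\infty$ bounds of assumption~(\ref{assumption}). A Moser-type iteration built on Proposition~\ref{PROPinterp1} and Lemma~\ref{LEMcaccioppolisecond} would then upgrade the Lipschitz control on the first derivatives to a uniform-in-$\epsilon$ H\"older bound in $\C^{1,\alpha}_\epsilon$ for some $\alpha>0$. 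These are exactly the prerequisites of Theorem~\ref{THEOfreesing}~(i), which, applied both to $z=u$ and to $z=Yu$, yields $u,Yu\in W^{2,10/3}_\epsilon$ on compact subsets. Sobolev embedding on compacts then produces $\alpha\geq 1/4$ in the new $\C^{1,\alpha}_\epsilon$ norm and the $L^4$ control on $YX_k u$ required to invoke part~(ii) of the same theorem, raising the regularity to $u,Yu\in W^{2,p}_\epsilon$ for every $p>1$, uniformly in $\epsilon$.

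For the inductive step, I would assume the conclusion at level $m$ for both $u$ and $Yu$ and apply Theorem~\ref{THEOiterations} with $z=X_k u$ and separately with $z=Yu$. After using Corollaries~\ref{CORfz} and~\ref{CORfv} to translate the $W^{m-1,(4p+2)/7}_\epsilon$ norm of $f$ into $W^{m+1}_\epsilon$-type norms on $u$ and $Yu$, all the terms on the right-hand side are of order at most $m$ on $u$ and $Yu$, hence controlled by the inductive hypothesis, except for the single residual term $\|u\|_{W^{m+1,(2p+1)/3}_\epsilon}^{(2p+1)/3}$, which is of the same differential order as the left-hand side but at a strictly smaller exponent since $(2p+1)/3<p+1/2$ for $p>1$. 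A finite cascade of applications of Theorem~\ref{THEOiterations}, starting from a low-exponent $W^{m+1}_\epsilon$ bound obtained from a single application of Lemma~\ref{LEMcaccioppolisecond} under the inductive hypothesis, therefore raises the exponent to any prescribed $p\geq 3$ and closes the induction.

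The main obstacle is the base case, specifically the passage from the Lipschitz assumption~(\ref{assumption}) to a uniform-in-$\epsilon$ H\"older estimate $\C^{1,\alpha}_\epsilon$. Since the vector fields $X_{k,u}$ fail the H\"ormander condition uniformly in $\epsilon$, the standard Moser route, which rests on a uniform Sobolev embedding, is unavailable; this is precisely the difficulty overcome in Section~4 by the freezing-and-lifting construction, which replaces the missing Sobolev inequality by a subelliptic estimate on a carefully designed step-three H\"ormander approximation whose second-order part fails to span the space, so that the second derivatives along the approximate operator still approximate those along $X_{k,u}$. Once this initial regularity is in place, the inductive step reduces to bookkeeping in Young's inequality and in the exponent arithmetic encoded in Theorem~\ref{THEOiterations}.
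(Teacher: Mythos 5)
Your inductive step is essentially the paper's argument: apply Theorem \ref{THEOiterations} to $z=X_ku$ and $v=Yu$, use Corollaries \ref{CORfz} and \ref{CORfv} to absorb $f$, and bootstrap on the integrability exponent, with a low-exponent starting point that the paper makes precise through Theorem \ref{THEOiterations} ii) together with the extra interpolation of Lemma \ref{LEMextra}. The genuine gap is in your base case, and it is a circularity. You propose to obtain the uniform-in-$\e$ bound in $\C^{1,\alpha}_\epsilon$ by ``a Moser-type iteration built on Proposition \ref{PROPinterp1} and Lemma \ref{LEMcaccioppolisecond}''; but these are purely integral interpolation/Caccioppoli estimates designed precisely to bypass the missing uniform Sobolev inequality for $\nabla_\epsilon$, and without such an embedding a Moser scheme based on them cannot produce sup or H\"older bounds --- as you yourself observe in your final paragraph. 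You then attribute the resolution to the Section~4 freezing-and-lifting, but Theorem \ref{THEOfreesing} takes $\|u\|_{\C^{1,\alpha}_\epsilon}$ as a \emph{hypothesis}; it cannot supply the initial H\"older estimate. The paper's actual mechanism is absent from your proposal: since (\ref{assumption}) controls both $X_{1,u}u$ and $Y_uu$, and both $X_ku$ and $Yu$ solve divergence-form equations, one proves a \emph{Euclidean} Caccioppoli inequality for $\nabla_E\bigl(|\nabla_\epsilon u|^{q/2}\bigr)$ via (\ref{nablaEeps}), and a \emph{classical Euclidean} Moser iteration in $\R^2$ (Theorem \ref{THEOeuclidean}) then gives the uniform $\C^{1,\alpha}_\epsilon$, $W^{2,2}_\epsilon$ and $W^{1,2}_\epsilon$ bounds needed to enter Theorem \ref{THEOfreesing} i).

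Two further points in the same step. First, you apply Theorem \ref{THEOfreesing} ``to $z=Yu$'', but $Yu$ is not a solution of $N_{\epsilon,u}z=0$ (it solves $M_{\epsilon,u}v=f_v$ with a nonzero right-hand side), so the theorem does not apply to it; the paper bounds $Yu$ only in $W^{1,p}_\epsilon$ at this stage, via Theorem \ref{THEOCaccioppoli1} (Proposition \ref{PROPfirstiteration} ii)), and recovers the higher-order bounds on $Yu$ from the main iteration. Second, the passage from $u\in W^{2,10/3}_\epsilon$ to $\alpha\geq 1/4$ and to $\|YX_ku\|_{L^4}\leq C'$ cannot be achieved by ``Sobolev embedding on compacts'': if that embedding is intrinsic it is exactly the unavailable uniform inequality, and if Euclidean it does not apply to intrinsic Sobolev norms without further work. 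The paper instead proves a second Euclidean Caccioppoli inequality (Lemma \ref{LEMeuclideancaccioppoli2}) and combines it with the Euclidean Sobolev inequality through an absorption argument that crucially exploits the exponent $10/3>3$ to make $\int_{\mathrm{supp}\,\phi}|\nabla_\epsilon z|^{3}$ small; this yields $\nabla^2_\epsilon u\in L^6_{loc}$, $\nabla_\epsilon Yu\in L^4_{loc}$, hence $\nabla_E\nabla_\epsilon u\in L^4_{loc}$ and, by Sobolev--Morrey in $\R^2$, Euclidean $\C^{0,1/2}$ regularity of $\nabla_\epsilon u$; only then can Theorem \ref{THEOfreesing} ii) be invoked to reach $W^{2,p}_\epsilon$ for all $p$. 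Without these Euclidean steps your base case does not close.
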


\subsection{First iteration step}

\subsubsection*{First Euclidean Caccioppoli inequality: $\C^{1,\alpha}$ regularity}
We start providing a regularity sufficient to apply Theorem \ref{THEOfreesing}. This can be obtained by an Euclidean Caccioppoli type estimate for solutions to (\ref{elliptic}), uniform in $\epsilon$, which allows to perform a classical Moser iteration.

\begin{lem}
Let $u$ be a smooth solution of $L_{\epsilon,u}u = 0$, then for
every $q \geq 2$ there exits a positive constant $C$ independent of
$\epsilon$ such that for any cutoff function $\phi$ it holds
$$
\int |\nabla_E (|\nabla_\epsilon u|^{q/2})|^2\phi^{2p} \leq C \int |\nabla_\epsilon u|^q\left(\phi^2 + |\nabla_E \phi|^2\right)\phi^{2p-2}\ .
$$
\end{lem}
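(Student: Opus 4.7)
The plan is to derive a Moser-type Caccioppoli inequality in Euclidean coordinates, which will serve as the starting point for a classical (Euclidean) Moser iteration and ultimately supply the $\mathcal{C}_E^{1,\alpha}$ regularity hypothesis of Theorem \ref{THEOfreesing}. The core of the argument is to pass from the nonlinear equation satisfied by $u$ to a linear, uniformly elliptic equation satisfied by the Euclidean partial derivatives $z_k=\partial_{x_k}u$, and then test it with a Moser-type multiplier.

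First I would differentiate $L_{\epsilon,u}u=0$ along $\partial_{x_k}$ for $k=1,2$. Since the coefficients $\sigma_i^j$ of $X_{i,u}$ depend smoothly on $x$ and on $u(x)$, this produces a linear equation for $z_k=\partial_{x_k}u$ of the form $X_{i,u}(\mathcal{A}_{ij}X_{j,u}z_k)=g_k$, where $g_k$ collects the commutator $[\partial_{x_k},X_{i,u}]$ applied to $\mathcal{A}_i$ together with the derivatives of the coefficients, and is controlled pointwise by a constant (depending on $M$) times $|\nabla_\epsilon u|+|\nabla_\epsilon u|\,|\nabla_\epsilon^2 u|$, thanks to (\ref{assCM}).

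Then I would test this equation against the Moser-type multiplier $z_k\,(1+|\nabla_\epsilon u|^2)^{(q-2)/2}\phi^{2p}$ and integrate by parts using the adjoint formula (\ref{adjoints}). The uniform ellipticity (\ref{positive}) produces the principal positive term
\begin{equation*}
c\int (1+|\nabla_\epsilon u|^2)^{(q-4)/2}|\nabla_\epsilon z_k|^2\phi^{2p},
\end{equation*}
which, after summation in $k=1,2$ and use of the identity $\partial_{x_k}X_{i,u}u=X_{i,u}z_k+[\partial_{x_k},X_{i,u}]u$ together with the chain rule $\nabla_E(|\nabla_\epsilon u|^{q/2})=\tfrac{q}{4}|\nabla_\epsilon u|^{q/2-2}\nabla_E|\nabla_\epsilon u|^2$, reconstructs, modulo lower-order commutator contributions, the desired left-hand side of the statement. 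The error terms generated by the integrations by parts split into: derivative-of-cutoff terms bounded by $|\nabla_E\phi|$; terms involving $m_i$ and the structure coefficients $\omega^i_{j,k}$, bounded uniformly in $\epsilon$ via (\ref{assCM}); and mixed first/second-order terms, which, after Young's inequality with small parameter $\delta$, are partly reabsorbed into the principal term and partly left as $\int|\nabla_\epsilon u|^q(\phi^2+|\nabla_E\phi|^2)\phi^{2p-2}$.

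The main obstacle will be $\epsilon$-uniformity of the constant $C$. Several objects appearing in the computation, in particular the adjoint coefficient $m_2$ and the commutator coefficient $\omega^1$, carry an explicit factor of $\epsilon$, while $\omega^2$ carries a $1/\epsilon$ through the term $\tfrac{1}{\epsilon}X_{2,u}u=Y_u u$. By (\ref{assCM}) and assumption (\ref{assumption}) these are uniformly controlled once each $1/\epsilon$ is grouped with its companion $\epsilon$ coming from $X_{2,u}=\epsilon Y_u$ or from the intrinsic gradient $\nabla_\epsilon$ in the test function. The delicate bookkeeping step is to ensure that every occurrence of $1/\epsilon$ produced by the adjoints or the commutators is paired with an $\epsilon$ carried by $\nabla_\epsilon$, so that the final constant depends only on $q$, $\Omega$ and $M$.
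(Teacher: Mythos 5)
Your plan follows the right family of arguments (differentiate the equation, test with a power-type multiplier, absorb by ellipticity), but the specific choice of differentiating along the Euclidean directions $\partial_{x_k}$ creates a genuine gap. First, the claimed bound $|g_k|\le C\,(|\nabla_\epsilon u|+|\nabla_\epsilon u|\,|\nabla_\epsilon^2u|)$ is not correct: since $\partial_3\sigma_1^l=\sigma_2^l$, one has $[\partial_{x_k},X_{1,u}]=(\partial_k\sigma_1^l)\big|_{\M}\partial_{x_l}+(\partial_{x_k}u)\,Y_u$, so the commutator terms are of size $O(|\nabla_E u|)=O(|X_{1,u}u|+|Y_uu|)$, which is controlled by $M$ but \emph{not} by $|\nabla_\epsilon u|$ uniformly in $\epsilon$; moreover, when $X_{i,u}$ falls on these coefficients and on the flux you also generate second derivatives of type $\nabla_\epsilon Y_u u$, not only $\nabla_\epsilon^2u$. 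Second, and more fundamentally, $z_k=\partial_{x_k}u$ is comparable to $|X_{1,u}u|+|Y_uu|$, whereas $|\nabla_\epsilon u|=\big(|X_{1,u}u|^2+\epsilon^2|Y_uu|^2\big)^{1/2}$; hence the zero-order, cutoff, adjoint ($m_i$) and forcing terms produced by your integration by parts are of size $O(1)$ in terms of $M$, and cannot be dominated by the stated right-hand side $C\int|\nabla_\epsilon u|^q(\phi^2+|\nabla_E\phi|^2)\phi^{2p-2}$: on a region where $X_{1,u}u\approx 0$ and $Y_uu\approx 1$ that right-hand side is of order $\epsilon^q$ while your error terms remain of order one. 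So the assertion that the leftovers are ``partly left as $\int|\nabla_\epsilon u|^q(\phi^2+|\nabla_E\phi|^2)\phi^{2p-2}$'' does not hold; at best your route yields a weaker inequality with an additional term of the type $\int(1+|\nabla_Eu|^q)(\phi^2+|\nabla_E\phi|^2)\phi^{2p-2}$, which is not the statement. (A further technical obstruction: with multiplier $\partial_{x_k}u$ but weight $(1+|\nabla_\epsilon u|^2)^{(q-2)/2}$ the term in which $X_{i,u}$ hits the weight carries no small factor and no sign, so it cannot be reabsorbed into the principal term; the classical sign trick requires the weight to be a function of the same quantity $\sum_k z_k^2$ that is being differentiated.)

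The paper's proof avoids all of this precisely by differentiating along the intrinsic fields. With $z=X_{k,u}u$ one has $|z|\le|\nabla_\epsilon u|$, and by Lemma \ref{LEMfirstderivatives} and Corollary \ref{CORfz} the forcing satisfies $|f|\le C(|\nabla_\epsilon u|+|\nabla_\epsilon^2u|)$; applying the already established Caccioppoli inequality of Lemma \ref{LEMcaccioppolifirst} (with the exponent on $z$ taken equal to $q-1$), every error term then carries the weight $|z|^{q-1}\le|\nabla_\epsilon u|^{q-1}$, the $|\nabla_\epsilon^2u|$-part is absorbed into the left-hand side via the identity $|\nabla_\epsilon z|^2|z|^{q-2}=\tfrac{4}{q^2}|\nabla_\epsilon(|z|^{q/2})|^2$, and the stated right-hand side comes out with a constant independent of $\epsilon$; the missing $Y$-component of the Euclidean gradient is then recovered from the analogous estimate built on $v=Y_uu$ (Corollary \ref{CORfv}) together with (\ref{nablaEeps}). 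To repair your argument you should either switch to the intrinsic derivatives $X_{k,u}u$ and $Y_uu$ as in the paper, or restate the lemma with the weaker right-hand side your computation actually delivers.
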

\begin{proof}
We first prove the following claim:
\begin{equation}\label{claim}
\int |\nabla_\epsilon (|\nabla_\epsilon u|^{q/2})|^2\phi^{2p} \leq C \int |\nabla_\epsilon u|^q\left(\phi^2 + |\nabla_E \phi|^2\right)\phi^{2p-2}\ .
\end{equation}
In order to do this, we note that setting $z = X_k u$ we can rewrite the conclusion of Lemma \ref{LEMcaccioppolifirst} as
$$
\int |\nabla_\epsilon (|z|^{q/2})|^2\phi^{2p} \leq C \left( \int |z|^q\left(\phi^2 + |\nabla_\epsilon \phi|^2\right)\phi^{2p-2} + \int |f| |z|^{q-1}\phi^{2p}\right)
$$
by choosing the exponent $p$ of the function $z$ as equal to $q-1$ while leaving unvaried the exponent of the cutoff function (the proof of Lemma \ref{LEMcaccioppolifirst} shows that the two exponents need not to be the same). Let us now look at the last term. By Corollary \ref{CORfz} we have
$$
|f| \leq C(|\nabla_\epsilon u| + |\nabla_\epsilon z|)
$$
so that
\begin{eqnarray*}
\int |f| |z|^{q-1}\phi^{2p} & \leq & C \left(\int |\nabla_\epsilon u|^q\phi^{2p} + \int |\nabla_\epsilon z| |z|^{q-1}\phi^{2p}\right)\\
& \leq & C \left( \left[1 + \frac{1}{\delta}\right]\int |\nabla_\epsilon u|^{q}\phi^{2p} + \delta\int |\nabla_\epsilon z|^2 |z|^{q-2}\phi^{2p}\right)
\end{eqnarray*}
and
$$
|\nabla_\epsilon z|^2 |z|^{q-2} = \frac{4}{q^2}|\nabla_\epsilon (|z|^{q/2})|^2\ .
$$
Noting then that $|\nabla_\epsilon \phi| \leq |\nabla_E \phi|$, claim (\ref{claim}) is proved. The same argument holds for $|Y(|\nabla_\epsilon u|^{q/2})|^2$, which provides the full statement since for any given function $h$
\begin{equation}\label{nablaEeps}
|\nabla_E h| \leq C \left(|\nabla_\epsilon h| + |Y h|\right)\ .
\end{equation}
\end{proof}

The previous lemma allows us to obtain a statement analogous to that of \cite[Proposition 3.7]{CCMn}, \ie
\begin{theo}\label{THEOeuclidean}
Let $u$ be a solution of $L_{\epsilon,u}u = 0$ in $\Omega \subset \R^2$, then for every compact set $K \Subset \Omega$ there exists a positive constant $C$ such that
$$
\|u\|_{\C_\epsilon^{1,\alpha}(K)} + \|u\|_{W_\epsilon^{2,2}(K)} + \|Yu\|_{W_\epsilon^{1,2}(K)} \leq C
$$
where the class $\C_\epsilon^{1,\alpha}$ is the same as was defined for Theorem \ref{THEOfreesing}.
\end{theo}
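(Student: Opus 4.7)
The plan is to follow the scheme of \cite[Proposition 3.7]{CCMn}, using the $\epsilon$-uniform Euclidean Caccioppoli inequality just established as the key ingredient. The previous lemma gives, for every $q\geq 2$, an $L^{2}$ bound on the \emph{Euclidean} gradient of $|\nabla_\epsilon u|^{q/2}$ in terms of a lower power of $|\nabla_\epsilon u|$. Combined with assumption (\ref{assumption}), which already provides a uniform $L^\infty$ bound on $|\nabla_\epsilon u|$, this is precisely the input that a Moser/De Giorgi iteration needs in order to deliver H\"older regularity of $\nabla_\epsilon u$.

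First I would extract the two Sobolev estimates. Taking $q=2$ in the previous lemma with a cutoff $\phi$ adapted to $K\Subset\Omega$, the right hand side is controlled by $C_M$ and the geometry of the cutoff, so I obtain uniformly in $\epsilon$
\begin{equation*}
\int_K |\nabla_E \nabla_\epsilon u|^2 \leq C,
\end{equation*}
where by (\ref{nablaEeps}) this quantity is comparable with $\int_K |\nabla_\epsilon^2 u|^2 + \int_K |Y\nabla_\epsilon u|^2$. The first summand is exactly $\|u\|_{W^{2,2}_\epsilon(K)}^2$. For the second, I would commute $Y$ and $X_{j,u}$ using the formulae of Section~2.3: since $[X_{j,u},X_{2,u}]=\omega^i_{j,2}X_{i,u}$ and $\omega^1/\epsilon$, $\omega^2$ are uniformly bounded by (\ref{assCM}), the scaling of $X_{2,u}=\epsilon Y_u$ makes $X_{i,u}Yu$ differ from $YX_{i,u}u$ by terms controlled in $L^2$ uniformly in $\epsilon$. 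This yields $\|Yu\|_{W^{1,2}_\epsilon(K)}\leq C$.

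Next I would obtain the $\C^{1,\alpha}_\epsilon$ estimate. The idea is that in $\R^2$ Sobolev embedding $W^{1,2}\hookrightarrow L^r$ for every $r<\infty$, combined with the Caccioppoli inequality of the previous lemma applied to the sequence of powers $q=2,4,8,\dots$, produces a classical Moser iteration whose output is, on one hand, the $L^\infty_{\mathrm{loc}}$ bound on $|\nabla_\epsilon u|$ (already granted by (\ref{assumption})), and on the other, by passing to oscillation estimates, a Morrey-Campanato type decay on balls. Equivalently, each component $v_k = X_{k,u}u$ solves the divergence-form equation (\ref{diveq1}) with right hand side $f$ satisfying $|f|\leq C(|\nabla_\epsilon u|+|\nabla_\epsilon^2 u|)$ by Corollary~\ref{CORfz}, and the Euclidean Caccioppoli above allows one to run the standard De Giorgi argument on $v_k$ viewed as a Euclidean PDE, producing some $\alpha\in(0,1)$ and a uniform H\"older seminorm.

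The main obstacle is keeping every constant independent of $\epsilon$. The previous lemma takes care of this for the Caccioppoli step, but one has to verify that the Moser/De Giorgi iteration does not silently reintroduce $\epsilon$-dependence: this is where working with the Euclidean gradient $\nabla_E$ rather than $\nabla_\epsilon$ is essential, because the Sobolev embedding in $\R^2$ has a constant independent of $\epsilon$, and the $L^\infty$ bound on $|\nabla_\epsilon u|$ that feeds the iteration is itself given by $M$. A secondary subtlety is that the H\"older class $\C^{1,\alpha}_\epsilon$ refers to H\"older continuity along the intrinsic vector fields $X_{i,u}$; since De Giorgi gives Euclidean $\C^{0,\alpha}$ of the $v_k$, and the coefficients $\sigma_i^j$ are smooth, the two notions coincide for these functions and the H\"older norm transfers with a constant depending only on $M$ and $K$, concluding the proof.
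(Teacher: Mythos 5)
The Sobolev half of your argument is fine and is what the paper intends: the $q=2$ case of the preceding Euclidean Caccioppoli lemma (applied componentwise to $z=X_{k,u}u$ and, via Corollary \ref{CORfv}, to $Yu$), the comparison (\ref{nablaEeps}), and the commutator identities of Section 2.3 together with (\ref{assCM})--(\ref{assumption}) do give $\|u\|_{W^{2,2}_\epsilon(K)}+\|Yu\|_{W^{1,2}_\epsilon(K)}\leq C$ uniformly in $\epsilon$.

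The gap is in the H\"older step. You propose to run ``the standard De Giorgi argument on $v_k$ viewed as a Euclidean PDE'' and argue that $\epsilon$-independence is saved because the Sobolev constant in $\R^2$ is Euclidean. But the obstruction is not the Sobolev constant: written in the Euclidean frame, the coefficient matrix of $M_{\epsilon,u}$ is $\sigma_\epsilon^{T}\A\,\sigma_\epsilon$ with the $X_{2,u}$ row scaled by $\epsilon$, so its ellipticity ratio degenerates like $\epsilon^{-2}$, and every constant in the classical De Giorgi/Moser scheme --- in particular those coming from the level-set Caccioppoli estimates --- blows up as $\epsilon\to 0$. The preceding lemma does not repair this: it is a Caccioppoli inequality for the powers $|X_{k,u}u|^{q/2}$, not for truncations $(X_{k,u}u-k)^{\pm}$, and its control of the \emph{Euclidean} gradient is obtained by coupling equation (\ref{diveq1}) for $X_{k,u}u$ with the companion equation for $Yu$; that coupling does not pass to level sets, so membership in a Euclidean De Giorgi class with $\epsilon$-free constants is not available at this stage. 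Note also that $\epsilon$-uniform Euclidean H\"older continuity of $\nabla_\epsilon u$ is strictly more than the theorem claims: the paper stresses that for functions in $\C_\epsilon^{1,\alpha}$ the Euclidean H\"older constant is \emph{not} independent of $\epsilon$, and Euclidean H\"older continuity of $\nabla_\epsilon u$ uniform in $\epsilon$ is only reached later, in Proposition \ref{PROPfirstiteration}, after the $W^{2,10/3}_\epsilon$ estimate of Theorem \ref{THEOfreesing} --- machinery that would be largely pointless if your shortcut worked. The route the paper points to, \cite[Proposition 3.7]{CCMn}, is a Moser iteration built on the uniform ellipticity of $M_{\epsilon,u}$ with respect to the $\nabla_\epsilon$ frame (where ellipticity, doubling and Poincar\'e data of the $\epsilon$-adapted balls are uniform in $\epsilon$), with the Euclidean Caccioppoli lemma supplying the $W^{2,2}_\epsilon$ and $Yu$ bounds needed to control the right-hand sides; its output is H\"older continuity of the $\nabla_\epsilon$-derivatives in the $\epsilon$-adapted sense, i.e.\ exactly the class $\C_\epsilon^{1,\alpha}$. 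To close your proof you must either follow that intrinsic iteration or prove a truncation-level Caccioppoli inequality with $\nabla_E$ on the left and $\epsilon$-free constants, which your proposal does not provide.
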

Indeed the previous Caccioppoli estimates works precisely on the $\nabla_\epsilon$ derivatives of $u$, so $\C_\epsilon^{1,\alpha}$ is the natural class that can result from a Moser iteration.

We note that this is sufficient to apply Theorem \ref{THEOfreesing}, since when dealing with the nonlinear equation there is not anymore difference between ``solution'' $z$ and ``coefficients'' $u$, so in particular the $L^p_{loc}$ requirement for $Yz$ is implied by the $\C^{1,\alpha}$ boundedness of $u$.

\subsubsection*{Second Euclidean Caccioppoli inequality: higher integrability}

Proceeding analogously to the proof of Proposition \ref{PROPinterp1} and Lemma \ref{LEMcaccioppolisecond}, we obtain the following interpolation inequality and a priori Caccioppoli estimate.

\begin{prop}[Second interpolation inequality]\label{PROPinterp2}
For every $p \geq 3$ there exists a positive constant $C$ dependent
only on $p$ and the constant $M$ in (\ref{assumption}) such that for
every function $z \in \C^\infty(\Omega)$, every cutoff function
$\phi$  and every $\delta > 0$ it holds
\begin{eqnarray*}
\int |X_iz|^{p+1}\phi^{2p} & \leq & C \left[\int \left(z^{p+1}\phi^{2p} + z^2|X_iz|^{p-1}\phi^{2(p-1)}|X_i\phi|^2\right)\right.\\
&& + \left.\int |X_i^2z|^2|X_iz|^{p-3}z^2\phi^{2p}\right]\ .
\end{eqnarray*}
\end{prop}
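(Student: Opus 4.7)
The plan is to mimic the proof of Proposition \ref{PROPinterp1}, adapted to the higher integrability exponent $p+1$. I would set $I := \int |X_iz|^{p+1}\phi^{2p}$, rewrite $|X_iz|^{p+1} = (X_iz)\,|X_iz|^{p}\sign(X_iz)$, and integrate by parts against $z$ using the adjoint relation (\ref{adjoints}), namely $X_i^\dag = -X_i - m_i$. Since $X_i\bigl(|X_iz|^{p}\sign(X_iz)\bigr) = p\,|X_iz|^{p-1}X_i^2 z$, this produces three terms: one involving $X_i^2 z\,|X_iz|^{p-1}$, one from differentiating $\phi^{2p}$ (a factor $X_i\phi\,\phi^{2p-1}$), and one from $m_i$ (bounded by $C_M$ via (\ref{assCM})).

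Each of these three terms is then controlled by Young's inequality with a small parameter $\eta>0$, and the splitting is forced by the shape of the target right-hand side. For the $X_i^2z$-term I would split the integrand as $\bigl(|X_iz|^{(p+1)/2}\phi^p\bigr)\cdot\bigl(|z|\,|X_iz|^{(p-3)/2}|X_i^2z|\phi^p\bigr)$ and apply $(2,2)$-Young, which produces exactly the last integral in the statement. For the $X_i\phi$-term, the analogous split $\bigl(|X_iz|^{(p+1)/2}\phi^p\bigr)\cdot\bigl(|z|\,|X_iz|^{(p-1)/2}\phi^{p-1}|X_i\phi|\bigr)$ reproduces the middle integral. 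The $m_i$-term reduces to $C_M\int|z|\,|X_iz|^{p}\phi^{2p}$, and I would invoke weighted Young with exponents $(p+1,(p+1)/p)$ to trade it against $\eta\int|X_iz|^{p+1}\phi^{2p}+C_\eta\int|z|^{p+1}\phi^{2p}$, producing the first target integral.

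After summing, the three pieces contribute a total coefficient of order $\eta$ in front of $I$; choosing $\eta$ sufficiently small (depending only on $p$ and $M$) lets me reabsorb this term on the left and conclude. The only delicate point, and the one I would track carefully, is the bookkeeping of $\sign(X_iz)$ through the integration by parts: since $\tfrac{d}{dt}\bigl(|t|^{p}\sign t\bigr)=p\,|t|^{p-1}$, the sign factors collapse cleanly, so the absolute values in the final bounds are legitimate. No use is made of the equation $L_{\epsilon,u}u=0$; this is a pure functional inequality, uniform in $\epsilon$ through (\ref{assCM}).
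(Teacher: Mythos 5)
Your argument is correct and is exactly the route the paper intends: it cites no separate proof for Proposition \ref{PROPinterp2} but says to proceed as in Proposition \ref{PROPinterp1}, i.e.\ write $|X_iz|^{p+1}=(X_iz)|X_iz|^{p}\sign(X_iz)\phi^{2p}$, integrate by parts via $X_i^\dag=-X_i-m_i$ with $\|m_i\|_{L^\infty}\leq C_M$ from (\ref{assCM}), and close with Young's inequality and reabsorption of the small multiple of $\int|X_iz|^{p+1}\phi^{2p}$ (finite since $z\in\C^\infty$ and $\phi$ has compact support). Your choice of splittings reproduces precisely the three terms on the right-hand side of the statement, so nothing further is needed.
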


\begin{cor}\label{CORestp+1}
Let $z$ be a smooth solution of equation (\ref{diveq1}) and let $f$
be locally summable in $\Omega$, then for all $p\geq 3$ there exists
a positive constant $C_2'$ depending only on $p$ and the constant
$M$ in (\ref{assumption}) such that for any cutoff function $\phi$
it holds
\begin{eqnarray*}
\int \left|\nabla_\epsilon \left(|\nabla_\epsilon z|^{(p-1)/2}\right)\right|^2\phi^{2p}
& \leq & C_2' \left(\delta\int |\nabla_\epsilon z|^{p+1}\phi^{2p} + \frac{1}{\delta}\int |\nabla_\epsilon^2 u|^{p+1}\phi^{2p} \right. \\
&& \!\!\!\!\!\!\left. + \,\frac{1}{\delta}\int |\nabla_\epsilon Yu|^{(p+1)/2} \phi^{2p} + \frac{1}{\delta}\int |f|^{(p+1)/2}\phi^{2p} + 1\right)\, .
\end{eqnarray*}
\end{cor}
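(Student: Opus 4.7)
The statement is a close variant of Lemma \ref{LEMcaccioppolisecond} with the Young-inequality exponents retuned, and the proof should follow the same skeleton. The plan is as follows.

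Setting $s = X_k z$, Lemma \ref{LEMsecondderivatives} shows that $s$ satisfies $M_{\epsilon,u} s = \rho$ with the explicit four-piece formula for $\rho$ given there. Applying Lemma \ref{LEMcaccioppolifirst} to $s$ (with its source $f$ replaced by $\rho$) immediately produces
$$
I \,\doteq\, \int \left|\nabla_\epsilon \bigl(|s|^{(p-1)/2}\bigr)\right|^2 \phi^{2p} \;\leq\; C_1\left(\frac{J' + J''}{\delta} \;-\; J\right),
$$
with $J' = \int |s|^{p-1}\phi^{2p}$, $J'' = \int |s|^{p-1}|\nabla_\epsilon\phi|^2\phi^{2p-2}$ and $J = \int \rho\,|s|^{p-3}s\,\phi^{2p}$. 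Since $|s|\le|\nabla_\epsilon z|$, the contribution of $J'+J''$ is handled by a single Young step of the form $a^{p-1}\leq \delta a^{p+1}+C_\delta$, producing the $\delta\int|\nabla_\epsilon z|^{p+1}\phi^{2p}$ term of the claim together with the additive ``$+1$'' (the $|\nabla_\epsilon \phi|$ factor is absorbed into the same constant since $\phi$ is a cutoff).

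The main work is the expansion of $J$. Substituting the formula for $\rho$ and integrating by parts on the terms carrying the outer $X_i$ derivative — exactly as in the proof of Lemma \ref{LEMcaccioppolisecond} — yields a finite sum of bulk integrals whose typical factors are $|\nabla_\epsilon^2 u|$, $|X\omega|$, $|f|$, $|\nabla_\epsilon s|$, and powers of $|s|$ and $|\nabla_\epsilon z|$. Each occurrence of $|\nabla_\epsilon s|\,|s|^{(p-3)/2}$ is rewritten through the Leibniz identity (\ref{leibniz}) as a constant multiple of $|\nabla_\epsilon(|s|^{(p-1)/2})|$ and peeled off by one Young step, producing $\delta\cdot I$ (which is absorbed into the left-hand side) and residuals of the shape $|\nabla_\epsilon^2 u|^2|\nabla_\epsilon z|^{p-1}$, $|X\omega|\,|\nabla_\epsilon z|^{p-1}$ and $|f|^2|\nabla_\epsilon z|^{p-3}$ plus pure $J',J''$-type terms.

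The only point where the present corollary diverges from Lemma \ref{LEMcaccioppolisecond} is the second Young step applied to each such residual: I would use the conjugate pair $\bigl((p+1)/2,\,(p+1)/(p-1)\bigr)$ in place of $\bigl((2p+1)/3,\,(2p+1)/(2p-2)\bigr)$. This choice places exponent $p+1$ on every surviving $|\nabla_\epsilon z|$ and on $|\nabla_\epsilon^2 u|$, exponent $(p+1)/2$ on $|f|$, and — through the pointwise bound $|X\omega|\leq C(|\nabla_\epsilon^2 u|+|\nabla_\epsilon Yu|)$ already established inside the proof of Lemma \ref{LEMcaccioppolisecond} — exponent $(p+1)/2$ on $|\nabla_\epsilon Yu|$ (after one final domination $|\nabla_\epsilon^2 u|^{(p+1)/2}\leq|\nabla_\epsilon^2 u|^{p+1}+1$, whose contribution merges into the second term of the claim). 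Because the chosen conjugate pair satisfies $\tfrac{2}{p+1}+\tfrac{p-1}{p+1}=1$, no reabsorption of $|\nabla_\epsilon z|^{p+1}$ back into $I$ is possible, which is precisely why that term appears on the right-hand side with a factor $\delta$ and the remaining four with a factor $1/\delta$. The main (and essentially only) obstacle is the bookkeeping: after every integration by parts one must check that the resulting integrals contain at most two derivatives of $u$, at most one derivative of $s$, and that the Young splitting never produces $|\nabla_\epsilon z|$ to a power exceeding $p+1$, so that everything lands inside one of the five terms of the stated inequality. Apart from this accounting — identical in spirit to that carried out in the proof of Lemma \ref{LEMcaccioppolisecond} — the argument is a mechanical repetition of that lemma.
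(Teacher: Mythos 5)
Your proposal is correct and is essentially the paper's own route: the paper gives no separate proof of Corollary \ref{CORestp+1}, saying only that one proceeds analogously to Proposition \ref{PROPinterp1} and Lemma \ref{LEMcaccioppolisecond}, and your rerun of the proof of Lemma \ref{LEMcaccioppolisecond} with the Young exponents retuned so that $|\nabla_\epsilon z|$ and $|\nabla_\epsilon^2 u|$ carry power $p+1$ (with weights $\delta$ and $1/\delta$) while $|f|$ and $|\nabla_\epsilon Yu|$ carry $(p+1)/2$ is exactly that argument. The only cosmetic slip is that for the residual $|f|^2|\nabla_\epsilon z|^{p-3}$ the conjugate pair is $\bigl(\tfrac{p+1}{4},\tfrac{p+1}{p-3}\bigr)$ rather than $\bigl(\tfrac{p+1}{2},\tfrac{p+1}{p-1}\bigr)$, but the target exponents you record are the correct ones, so this does not affect the conclusion.
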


These two results allow to obtain a second Euclidean Caccioppoli estimate for solutions to the nonlinear elliptic regularized equation, which combined with the Euclidean Sobolev inequality will provide the first step of the iteration.

\begin{lem}\label{LEMeuclideancaccioppoli2}
Let $u$ be a smooth solution of $L_{\epsilon, u}u = 0$. Then there exists a positive constant $C$ independent of $\epsilon$ such that its derivatives $z = X_ku$ satisfy
\begin{displaymath}
\int |\nabla_E\nabla_\epsilon z|^2\phi^6 \leq C \left( \int |\nabla_\epsilon z|^4\phi^6 + 1\right)\ .
\end{displaymath}
\end{lem}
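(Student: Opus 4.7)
The strategy I would follow is to split the Euclidean gradient of $\nabla_\epsilon z$ into its intrinsic and transverse pieces through the pointwise inequality (\ref{nablaEeps}) applied componentwise, obtaining
$$
|\nabla_E \nabla_\epsilon z|^{2} \leq C\bigl(|\nabla_\epsilon \nabla_\epsilon z|^{2} + |Y \nabla_\epsilon z|^{2}\bigr),
$$
and then bound each piece by $\int |\nabla_\epsilon z|^{4}\phi^{6} + 1$ using the tools already assembled in Section~3 together with the equation for $v=Yu$ afforded by Corollary~\ref{CORfv}.

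For the intrinsic piece I would invoke Corollary~\ref{CORestp+1} with $p=3$, applied to the equation (\ref{diveq1}) satisfied by $z=X_k u$:
$$
\int |\nabla_\epsilon \nabla_\epsilon z|^{2}\phi^{6} \leq C_2'\Bigl(\delta \int |\nabla_\epsilon z|^{4}\phi^{6} + \tfrac{1}{\delta}\int |\nabla_\epsilon^{2}u|^{4}\phi^{6} + \tfrac{1}{\delta}\int |\nabla_\epsilon Yu|^{2}\phi^{6} + \tfrac{1}{\delta}\int |f|^{2}\phi^{6} + 1\Bigr),
$$
where I rely on the refined form of Lemma~\ref{LEMcaccioppolifirst} which, via ellipticity of $\A_{ij}$, controls $\int|\nabla_\epsilon s|^{2}|s|^{p-3}\phi^{2p}$ rather than only its Kato counterpart, so that summing over $k$ recovers the full matrix norm $|\nabla_\epsilon \nabla_\epsilon z|^{2}$. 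Since $z=X_k u$, after summing over $k$ the $|\nabla_\epsilon^{2}u|^{4}$ term is controlled by $|\nabla_\epsilon z|^{4}$; by Corollary~\ref{CORfz} and (\ref{assumption}) one has $|f|\leq C(1+|\nabla_\epsilon z|)$, so $\int|f|^{2}\phi^{6}$ is absorbed by Young's inequality; and $\int|\nabla_\epsilon Yu|^{2}\phi^{6}$ is handled by applying Lemma~\ref{LEMcaccioppolifirst} with $p=3$ to $v=Yu$, which solves $M_{\epsilon,u}v=f_v$ with $|f_v|\leq C(1+|\nabla_\epsilon^{2}u|+|\nabla_\epsilon v|)$ by Corollary~\ref{CORfv}. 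Using $|Yu|\leq M$ from (\ref{assumption}) to kill the zeroth-order terms one gets $\int|\nabla_\epsilon v|^{2}\phi^{6}\leq C(1+\int|\nabla_\epsilon^{2}u|^{2}\phi^{6})$, which is absorbed in $\int|\nabla_\epsilon z|^{4}\phi^{6}+1$.

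For the transverse piece I would write $YX_i z = X_i Yz + [Y,X_i]z$ and exploit the commutator identities $[Y,X_2]=0$ and $[Y,X_1] = -(\omega^{1}/\epsilon)X_1 - \omega^{2}Y$, whose coefficients are bounded by $C_M$ uniformly in $\epsilon$ by (\ref{assCM}). Thus $|[Y,X_i]z|\leq C(|\nabla_\epsilon z|+|Yz|)$, while $|YX_2 z|=\epsilon|Y^{2}z|$ carries $\epsilon$-factors that, once written back in terms of $v=Yu$ through the identity $YX_k u = X_k v + [Y,X_k]u$ and the boundedness of $u,Yu$, reduce everything to first- and second-order derivatives of $v$. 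These are controlled in $L^{2}$ by the Caccioppoli machinery of Section~3 applied to the equation for $v$ (Corollary~\ref{CORfv}), using (\ref{assumption}) to kill the zeroth-order terms; the resulting bounds again fit within $\int|\nabla_\epsilon z|^{4}\phi^{6}+1$. Summing the two estimates and choosing $\delta$ small enough to reabsorb the $\delta\int|\nabla_\epsilon z|^{4}$ term yields the claim.

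The main obstacle is the transverse piece, since the PDE is essentially one-dimensional (degenerate along $Y$) and does not itself supply $L^{2}$ control of second transverse derivatives. The argument forces us to rely on the companion equation satisfied by $v=Yu$ and on the a priori bound $\|Yu\|_\infty\leq M$ from (\ref{assumption}); the scaling $X_2=\epsilon Y$ is crucial, as it ensures that $[Y,X_i]$ has $\epsilon$-uniform coefficients and that $|YX_2 z|=\epsilon|Y^{2}z|$ comes with enough $\epsilon$ weights to absorb higher transverse derivatives into intrinsic norms of $z$ and $v$.
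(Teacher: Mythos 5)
Your overall skeleton is the paper's: you split $|\nabla_E\nabla_\epsilon z|$ via (\ref{nablaEeps}) into an intrinsic and a transverse part, bound the intrinsic part by Corollary \ref{CORestp+1} with $p=3$ together with Corollaries \ref{CORfz} and \ref{CORfv}, and reduce the transverse part by commutators to $|\nabla_\epsilon^2 v|$, $|\nabla_\epsilon v|$, $|\nabla_\epsilon^2 u|$ plus bounded terms, which is precisely the paper's inequality (\ref{nablaE}).

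The gap is in the last step, the $L^2$ control of $\nabla_\epsilon^2 v$ with $v=Yu$. You need $\int|\nabla_\epsilon^2 v|^2\phi^6\leq C\left(\int|\nabla_\epsilon z|^4\phi^6+1\right)$, and ``the Caccioppoli machinery of Section 3 applied to the equation for $v$, using (\ref{assumption}) to kill the zeroth-order terms'' does not deliver it: Lemma \ref{LEMcaccioppolisecond} (or Theorem \ref{THEOCaccioppoli1}) with $p=3$ produces the term $\int|\nabla_\epsilon^2 u|^{(4p+2)/3}\phi^{2p}=\int|\nabla_\epsilon^2 u|^{14/3}\phi^{6}$, a power strictly larger than $4$ that cannot fit inside $\int|\nabla_\epsilon z|^4\phi^6+1$; and if instead you apply Corollary \ref{CORestp+1} to $v$ (as you do for $z$), its right-hand side contains $\delta\int|\nabla_\epsilon v|^4\phi^6$, which---unlike the corresponding term for $z$---is not part of the claimed bound, while the sup bound $\|Yu\|_\infty\leq M$ only kills terms in $v$ itself, not in $\nabla_\epsilon v$, and your estimate $\int|\nabla_\epsilon v|^2\phi^6\leq C\left(1+\int|\nabla_\epsilon^2 u|^2\phi^6\right)$ has the wrong power to help. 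The missing ingredient is Proposition \ref{PROPinterp2} applied with $p=3$ to $v$ (where the factors $v^2\leq M^2$ by (\ref{assumption}), and the first-order term is controlled by Theorem \ref{THEOeuclidean} or self-absorbed by Young), which converts $\int|\nabla_\epsilon v|^4\phi^6$ into $C\left(1+\int|\nabla_\epsilon^2 v|^2\phi^6\right)$; inserting this into the $v$-Caccioppoli estimate and taking $\delta$ small reabsorbs $\int|\nabla_\epsilon^2 v|^2\phi^6$ into the left side---this is exactly the paper's chain (\ref{intrv})--(\ref{intrcaccv}). With that step supplied your argument coincides with the paper's proof; your remaining variations (bounding $\int|\nabla_\epsilon v|^2\phi^6$ via Lemma \ref{LEMcaccioppolifirst} and $\|v\|_\infty\leq M$ rather than Theorem \ref{THEOeuclidean}, and the final ``reabsorption'' of $\delta\int|\nabla_\epsilon z|^4\phi^6$, which in fact needs no reabsorption since it already sits on the claimed right-hand side) are harmless.
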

\begin{proof}
Let us indicate $v = Yu$ and $z = X_k u$ and denote by $C$ a positive constant depending only on $p,M$ allowed to change from line to line.

By Corollary \ref{CORestp+1}, choosing $p=3$ and using Corollaries \ref{CORfz} and \ref{CORfv}, we get
\begin{eqnarray*}
\int |\nabla_\epsilon^2 z|^2\phi^6 & \leq & C \left(\delta\int |\nabla_\epsilon z|^4\phi^6 + \frac{1}{\delta}\left(\int |\nabla_\epsilon^2 u|^4 \phi^6 + \int |\nabla_\epsilon v|^2\phi^6 + 1\right)\right)\\
\int |\nabla_\epsilon^2 v|^2\phi^6 & \leq & C \left(\delta\int |\nabla_\epsilon v|^4\phi^6 + \frac{1}{\delta}\left(\int |\nabla_\epsilon^2 u|^4 \phi^6 + 1\right)\right)\ .
\end{eqnarray*}
Moreover, by Proposition \ref{PROPinterp2} with $p=3$ and using assumption (\ref{assCM}) we obtain
\begin{eqnarray*}
\int |\nabla_\epsilon z|^4 \phi^6 & \leq & C \left( \int |\nabla_\epsilon z|^2 \phi^4 |\nabla_\epsilon \phi|^2 + \int |\nabla_\epsilon^2 z|^2 \phi^6\right)\\
\int |\nabla_\epsilon v|^4 \phi^6 & \leq & C \left( \int |\nabla_\epsilon v|^2 \phi^4 |\nabla_\epsilon \phi|^2 + \int |\nabla_\epsilon^2 v|^2 \phi^6\right)\\
\end{eqnarray*}
where the first term is bounded by Theorem \ref{THEOeuclidean}, so that
\begin{eqnarray}
\int |\nabla_\epsilon z|^4\phi^6 & \leq & C\left( \int |\nabla_\epsilon^2 z|^2\phi^6 + 1 \right)\nonumber\\
\int |\nabla_\epsilon v|^4\phi^6 & \leq & C\left( \int |\nabla_\epsilon^2 v|^2\phi^6 + 1 \right)\label{intrv}
\end{eqnarray}
that provide the two intrinsic Caccioppoli inequalities
\begin{eqnarray}
\int |\nabla_\epsilon^2 z|^2\phi^6 & \leq & C \left(\int |\nabla_\epsilon^2 u|^4 \phi^6 + 1\right)\nonumber\\
\int |\nabla_\epsilon^2 v|^2\phi^6 & \leq & C \left(\int |\nabla_\epsilon^2 u|^4 \phi^6 + 1\right)\label{intrcaccv}\ .
\end{eqnarray}
Noting that
\begin{eqnarray*}
|\nabla_E X_j z| & \leq & |X_1 X_j z| + |Y X_j z| \leq |\nabla_\epsilon^2 z| + |X_j Y X_k u| + \frac{1}{\epsilon} |\omega^l X_l z|\\
& \leq & |\nabla_\epsilon^2 z| + |X_j X_k Y u| + \frac{1}{\epsilon}|X_j \omega^l X_l u| + \frac{1}{\epsilon} |\omega^l X_l z|\\
& \leq & |\nabla_\epsilon^2 z| + |\nabla_\epsilon^2 v| + \frac{1}{\epsilon} |(X_j\omega^l) X_l u| + \frac{1}{\epsilon} |\omega^l X_j X_l u| + \frac{1}{\epsilon} |\omega^l X_l z|
\end{eqnarray*}
where
\begin{eqnarray*}
\frac{1}{\epsilon} |(X_j\omega^l) X_l u| & = & \left| \left(X_j \frac{\omega^1}{\epsilon}\right)X_1 u + (X_j\omega^2)Yu \right| \leq C |\nabla_\epsilon^2 u|\\
\frac{1}{\epsilon} |\omega^l X_j X_l u| & = & \left|\frac{\omega^1}{\epsilon}X_jX_1 u + \omega^2 X_j Y u\right| \leq C \left( |\nabla_\epsilon^2 u| + |\nabla_\epsilon v|\right)\\
\frac{1}{\epsilon} |\omega^l X_l z| & = & \left| \frac{\omega^1}{\epsilon} X_1 z + \omega^2 Y z\right| \leq C\left(|\nabla_\epsilon z| + |\nabla_\epsilon v| + 1\right)
\end{eqnarray*}
so that
\begin{equation}\label{nablaE}
|\nabla_E X_j z| \leq C \left( |\nabla_\epsilon^2 z| + |\nabla_\epsilon^2 v| + |\nabla_\epsilon z| + |\nabla_\epsilon v| + |\nabla_\epsilon^2 u| + 1\right)
\end{equation}
we end up with the desired claim.
\end{proof}

\begin{prop}\label{PROPfirstiteration}
Let $u$ be a solution of $L_{\epsilon,u}u = 0$ in $\Omega \subset \R^2$, then for every $\Omega_1\Subset\Omega$ and every $p \geq 1$ there exists a positive constant $C$ independent of $\epsilon$ such that
\begin{itemize}
\item[i)] $\|u\|_{W_\epsilon^{2,p}(\Omega_1)} \leq C$
\item[ii)] $\|Yu\|_{W_\epsilon^{1,p}(\Omega_1)} \leq C$
\end{itemize}
\end{prop}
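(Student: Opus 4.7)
The plan is to chain the $W^{2,10/3}_\epsilon$ estimate provided by Theorem \ref{THEOfreesing} with a bootstrap over the intrinsic Caccioppoli estimates (Proposition \ref{PROPinterp2} and Corollary \ref{CORestp+1}) and the two-dimensional Euclidean Sobolev embedding, so as to upgrade the integrability of $\nabla_\epsilon^2 u$ and $\nabla_\epsilon Yu$ from $L^2$ up to arbitrary $L^p$.

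First, I would verify the hypotheses of Theorem \ref{THEOfreesing}(i) for $z=u$, which is a smooth solution of the nondivergence-form equation $N_{\epsilon,u}u=0$ recalled in (\ref{nondivergence}). Theorem \ref{THEOeuclidean} supplies the bounds $\|u\|_{\C^{1,\alpha}_\epsilon}$, $\|u\|_{W^{2,2}_\epsilon}$ and $\|Yu\|_{W^{1,2}_\epsilon}$, while assumption (\ref{assumption}) yields $Yu\in L^\infty$, in particular $\|Yu\|_{L^p}$ bounded for every $p>10/3$. This delivers the baseline estimate $u\in W^{2,10/3}_\epsilon(K_1)$ on compact subsets $K_1\Subset\Omega$.

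Second, I would upgrade the H\"older exponent by means of the two-dimensional Euclidean Sobolev embedding. Combining the bound $\nabla_\epsilon^2 u\in L^{10/3}_\loc$ with the intrinsic Caccioppoli chain developed in the proof of Lemma \ref{LEMeuclideancaccioppoli2} applied to $v=Yu$, and invoking the inequality $|\nabla_E h|\leq C(|\nabla_\epsilon h|+|Yh|)$ from (\ref{nablaEeps}), one controls $\nabla_E\nabla_\epsilon u$ in some $L^q_\loc$ with $q>2$. The embedding $W^{1,q}_E\hookrightarrow \C^{0,1-2/q}$ in two dimensions then promotes $u$ to $\C^{1,\alpha}_\epsilon$ with $\alpha\geq 1/4$, which is precisely the threshold required by Theorem \ref{THEOfreesing}(ii).

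Third, I would establish the remaining hypothesis $\|YX_ku\|_{L^4(K)}\leq C'$. Splitting $YX_ku=X_kYu+[Y,X_k]u$, the commutator is uniformly bounded by (\ref{structure}) and (\ref{assCM}), while $X_kYu$ is a component of $\nabla_\epsilon Yu$. The $L^4$ bound on $\nabla_\epsilon Yu$ is then obtained by applying Proposition \ref{PROPinterp2} and Corollary \ref{CORestp+1} to $v=Yu$, which solves $M_{\epsilon,u}v=f_v$ with $|f_v|\leq C(1+|\nabla_\epsilon^2 u|+|\nabla_\epsilon v|)$ according to Corollary \ref{CORfv}, coupled with small-parameter absorption that uses the already available $W^{2,10/3}_\epsilon$ control of $u$ to dominate the $|\nabla_\epsilon^2 u|^{p+1}$ term on the right of Corollary \ref{CORestp+1}. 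With every hypothesis of Theorem \ref{THEOfreesing}(ii) thus verified, the conclusion $u\in W^{2,p}_\epsilon(\Omega_1)$ for every $p\geq 1$ follows, proving (i). Assertion (ii) is then obtained by one final pass of the same intrinsic Caccioppoli scheme for $v=Yu$, now with arbitrarily high integrability of $\nabla_\epsilon^2 u$ at disposal on the right.

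The principal obstacle lies in the third stage: Corollary \ref{CORestp+1} produces a right-hand side contribution in $|\nabla_\epsilon^2 u|^{p+1}$ which is of the same order as the quantity one is trying to estimate when $z$ is derived from $u$. Circumventing this near-circularity requires a careful pairing of the divergence-form Caccioppoli for $v=Yu$ with the $W^{2,10/3}_\epsilon$ bound secured in the first stage, together with judicious use of Young's inequality with small parameter on slightly nested compact subsets, in order to absorb the competing terms into each other and close the bootstrap uniformly in $\epsilon$.
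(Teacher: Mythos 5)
Your overall architecture is the same as the paper's (Theorem \ref{THEOeuclidean} to enter Theorem \ref{THEOfreesing} i), then upgrade the H\"older exponent and the $L^4$ bound on $YX_ku$ to enter Theorem \ref{THEOfreesing} ii), then part ii) via the Caccioppoli machinery applied to $v=Yu$), but the decisive step is missing and the substitute you propose for it cannot work. The obstacle you correctly identify at the end --- that Corollary \ref{CORestp+1} with $p=3$ produces $\int|\nabla_\epsilon^2 u|^{4}\phi^{2p}$ on the right while you only control $\nabla_\epsilon^2 u$ in $L^{10/3}$ --- is not removable by ``Young's inequality with small parameter on nested compacts'': that term involves $u$, not the function $v$ being estimated, so it is not of the same nature as anything on the left-hand side of the $v$-estimates and there is nothing to reabsorb it into; and $4>10/3$, so no interpolation against the $L^{10/3}$ bound closes it either. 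The paper's resolution is a genuinely different mechanism, which your stage 2 glosses over with ``one controls $\nabla_E\nabla_\epsilon u$ in some $L^q$, $q>2$'': one applies Lemma \ref{LEMeuclideancaccioppoli2} to $z=X_ku$ together with the Euclidean Sobolev inequality in $\R^2$, obtaining
\begin{equation*}
\Bigl(\int\bigl(|\nabla_\epsilon z|\phi^3\bigr)^{6}\Bigr)^{1/3}\leq C\Bigl[\Bigl(\int\bigl(|\nabla_\epsilon z|\phi^3\bigr)^{6}\Bigr)^{1/3}\Bigl(\int_{\mathrm{supp}\,\phi}|\nabla_\epsilon z|^{3}\Bigr)^{2/3}+1\Bigr],
\end{equation*}
and here the self-absorption is possible precisely because $3<10/3$: by H\"older against the $L^{10/3}$ bound from Theorem \ref{THEOfreesing} i), the factor $\int_{\mathrm{supp}\,\phi}|\nabla_\epsilon z|^{3}$ is made small by shrinking the support of the cutoff. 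This yields $\nabla_\epsilon^2 u\in L^6_{\mathrm{loc}}$, and only then do the intrinsic inequalities (\ref{intrv})--(\ref{intrcaccv}) give $\nabla_\epsilon Yu\in L^4_{\mathrm{loc}}$, hence $\nabla_E\nabla_\epsilon u\in L^4_{\mathrm{loc}}$ and, by Sobolev--Morrey, $\C^{1,1/2}_\epsilon$ regularity; both hypotheses of Theorem \ref{THEOfreesing} ii) then follow at once.

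Two further points. First, even granting your unproved claim of ``some $q>2$'', the embedding $W^{1,q}_E\hookrightarrow\C^{0,1-2/q}$ gives $\alpha\geq 1/4$ only for $q\geq 8/3$, so your stage 2 does not reach the threshold of Theorem \ref{THEOfreesing} ii) as written; the paper gets $q=4$ and $\alpha=1/2$, but only after the $L^6$ upgrade described above. Second, your stages 2 and 3 are circular as ordered: the $q>2$ integrability of $\nabla_E\nabla_\epsilon u$ requires $\nabla_\epsilon Yu$ in $L^q$, $q>2$, which by (\ref{intrcaccv}) requires $\nabla_\epsilon^2 u\in L^4$, which is exactly what stage 3 is supposed to be struggling with. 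The absorption must be carried out in the equation for $z=X_ku$ (where $\nabla_\epsilon^2 u$ is itself the estimated quantity), not in the equation for $v=Yu$; once that is done, the rest of your outline, including the treatment of $YX_ku=X_kYu+[Y,X_k]u$ and the final bootstrap for assertion ii) via Theorem \ref{THEOCaccioppoli1}, matches the paper.
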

\begin{proof}
To prove i) we make use of Lemma \ref{LEMeuclideancaccioppoli2} together with the Euclidean Sobolev inequality in $\R^2$. Let us indicate $z = X_ku$ and denote with $\phi$ a cutoff function. Then we get, for any $2 \leq r < \infty$
\begin{eqnarray*}
\left(\int\left(|\nabla_\epsilon z|\phi^3\right)^r\right)^{2/r} & \leq & \int \left|\nabla_E \left(\nabla_\epsilon z \phi^3\right)\right|^2 \ \leq \ C \left[ \int |\nabla_\epsilon z|^4\phi^6 + 1\right]\\
& & \!\!\!\!\!\!\!\!\!\!\!\!\!\!\!\!\!\!\!\!\!\!\!\! \leq \ C\left[ \left(\int \left(|\nabla_\epsilon z|^\alpha\phi^{3\alpha}\right)^q\right)^{\frac{1}{q}}\left(\int \left(|\nabla_\epsilon z|^{4-\alpha}\phi^{6-3\alpha}\right)^{\frac{q}{q-1}}\right)^{\frac{q-1}{q}} + 1 \right]
\end{eqnarray*}
where the last transition is H\"older inequality. We now choose $\alpha$ and $q$ such that $\alpha q = r$ and $1/q = 2/r$, \ie $\alpha = 2$ and $q = r/2$ so that
\begin{equation}\label{tmpest}
\left(\int\left(|\nabla_\epsilon z|\phi^3\right)^r\right)^{2/r} \leq C \left[ \left(\int \left(|\nabla_\epsilon z|\phi^3\right)^r\right)^{\frac{2}{r}}\left(\int_{\textrm{supp}(\phi)} |\nabla_\epsilon z|^{\frac{2r}{r-2}}\right)^{\frac{r-2}{r}} + 1 \right]
\end{equation}
By Theorem \ref{THEOeuclidean} we can apply Theorem \ref{THEOfreesing} i), so that if we set $r=6$ we have $2r/(r-2) = 3$, and
$$
\int_{\textrm{supp}(\phi)} |\nabla_\epsilon z|^3 \leq \left(\int_{\textrm{supp}(\phi)} |\nabla_\epsilon z|^{\frac{10}{3}}\right)^{9/10} 
|\textrm{supp}(\phi)|^{1/10}
$$
can be chosen arbitrarily small with the support of $\phi$. We emphasize that for this point it was crucial to have an integrability up to a power higher than 3. This could be obtained through estimates which could be applied with the full strength of the H\"ormander setting by making use of the freezing technique.

Since estimate (\ref{tmpest}) holds for $k = 1,2$ we now obtain
$$
\int |\nabla_\epsilon^2 u|^6\phi^{18} \leq C\ .
$$
By inequalities (\ref{intrv}) and (\ref{intrcaccv}) we then have
\begin{eqnarray}\label{p4}
\int |\nabla_\epsilon v|^4\phi^6 & \leq & \int |\nabla_\epsilon^2 v|^2\phi^6 \leq C \left(\int |\nabla_\epsilon^2 u|^4 \phi^6 + 1\right)\\
& \leq & C \left(\int |\nabla_\epsilon^2 u|^6 \phi^{18} + \int_{\textrm{supp}(\phi)} |\nabla_\epsilon^2 u|^3 + 1\right) \leq C\ .\nonumber
\end{eqnarray}
Now since
$$
|\nabla_E \nabla_\epsilon u| \leq C \left(|\nabla_\epsilon^2 u| + |\nabla_\epsilon v| + 1\right)
$$
we have proved that $|\nabla_E \nabla_\epsilon u| \in L^4_{loc}$, and consequently by the Euclidean Sobolev-Morrey inequality in $\R^2$ we have that $\nabla_\epsilon u$ is Euclidean H\"older continuous of order $1/2$. This way we can apply the conclusion of Theorem \ref{THEOfreesing} ii) and obtain point i).

For point ii) we note that we can apply Theorem \ref{THEOCaccioppoli1} to $v=Yu$:
\begin{eqnarray*}
\|v\|_{W^{1,p+1/2}(\Omega)}^{p+1/2} & \leq & C_3\left(\|v\|_{L^{4p+2}(\Omega)}^{4p+2} + \|u\|_{W^{2,(4p+2)/3}(\Omega)}^{(4p+2)/3}\right.\\
&& \left. \ + \|v\|_{W^{1,(2p+1)/3}(\Omega)}^{(2p+1)/3} + \|f_v\|_{L^{(4p+2)/7}(\Omega)}^{(4p+2)/7} + 1\right)
\end{eqnarray*}
where at the right hand side the first term is bounded by assumption (\ref{assCM}), the second term is point i) we proved, and the third and fourth terms can be bounded using Corollary \ref{CORfv} by
$$
\|\nabla_\epsilon v\|_{L^{(2p+1)/3}(\Omega)}^{(2p+1)/3} + \|f_v\|_{L^{(4p+2)/7}(\Omega)}^{(4p+2)/7} \leq C\|\nabla_\epsilon v\|_{L^p(\Omega)}^p
$$
that is bounded for $p=4$ by (\ref{p4}) and then for all $p$.
\end{proof}

\subsection{Proof of the main iteration step}

\begin{lem}\label{LEMextra}

Let us assume (\ref{assumption}). Then there exists a positive constant $C$ depending only on $\Omega$ and $M$ of (\ref{assumption}) such that for every $\delta$ sufficiently small it holds
\begin{displaymath}
\int |\nabla_\epsilon z|^{7/3} \phi^{4} \leq C \left(\delta \int |\nabla_\epsilon^2 z|^2 \phi^{4} + \delta \int |\nabla_\epsilon z|^2\phi^{4} + \frac{1}{\delta} \int |z|^{3}\left( |\nabla_\epsilon \phi|^{3} + \phi^{3}  \right)\phi + 1\right)
\end{displaymath}
for every function $z \in \C^\infty(\Omega)$ and every cutoff
function $\phi$.
\end{lem}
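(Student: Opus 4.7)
The plan is to mimic the structure of Propositions \ref{PROPinterp1} and \ref{PROPinterp2}: starting from the left-hand side, integrate by parts via the adjoint formula $X_i^\dagger = -X_i - m_i$ from (\ref{adjoints}) to transfer one derivative off $X_iz$ onto $z$, then apply weighted Young inequalities so that the higher-order terms (involving $|X_i^2z|$ and $|X_iz|$) carry a small parameter $\delta$ while the lowest-order term in $z$ absorbs the compensating large constant $C/\delta^{\beta}$. Note that, in contrast with Proposition \ref{PROPinterp1}, no $|\nabla_\epsilon z|^{7/3}$ appears on the right-hand side of the target inequality, so the role of $\delta$ is only to calibrate the split, not to reabsorb the LHS.

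Concretely, for each $i\in\{1,2\}$ I would write
$$
\int |X_iz|^{7/3}\phi^4 \;=\; \int (X_iz)\,|X_iz|^{4/3}\sign(X_iz)\,\phi^4
$$
and integrate by parts. Since $\tfrac{d}{dt}(|t|^{4/3}\sign t) = \tfrac{4}{3}|t|^{1/3}$, this produces three contributions:
(a) $\tfrac{4}{3}\int z\,|X_iz|^{1/3}(X_i^2z)\phi^4$ from differentiating $|X_iz|^{4/3}\sign(X_iz)$;
(b) $4\int z\,|X_iz|^{4/3}\phi^3\,X_i\phi\,\sign(X_iz)$ from the derivative of $\phi^4$;
(c) $-\int m_i\,z\,|X_iz|^{4/3}\phi^4\,\sign(X_iz)$, with the coefficient controlled by (\ref{assCM}).

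For the two-factor terms (b) and (c), I apply Young with exponents $3/2$ and $3$, splitting as $\bigl(|X_iz|^{4/3}\phi^{8/3}\bigr)\bigl(|z|\phi^{4/3}\bigr)$ for (c) and $\bigl(|X_iz|^{4/3}\phi^{8/3}\bigr)\bigl(|z||X_i\phi|\phi^{1/3}\bigr)$ for (b). The choice of $\phi$-exponents (summing to $4$) ensures each output term carries $\phi^4$, producing
$\delta|\nabla_\epsilon z|^2\phi^4 + C\delta^{-2}|z|^3\phi^4$ from (c) and $\delta|\nabla_\epsilon z|^2\phi^4 + C\delta^{-2}|z|^3|\nabla_\epsilon\phi|^3\phi$ from (b).  Term (a) is the delicate one: I use a three-factor Young with exponents $p_1=2$, $p_2=6$, $p_3=3$ (which satisfy $\tfrac{1}{p_1}+\tfrac{1}{p_2}+\tfrac{1}{p_3}=1$), writing the integrand as
$$
|z||X_iz|^{1/3}|X_i^2z|\phi^4 \;=\; \bigl(|X_i^2z|\phi^2\bigr)\bigl(|X_iz|^{1/3}\phi^{2/3}\bigr)\bigl(|z|\phi^{4/3}\bigr),
$$
where the $\phi$-exponents satisfy $2+\tfrac{2}{3}+\tfrac{4}{3}=4$, so each output term again carries $\phi^4$: $\delta|\nabla_\epsilon^2 z|^2\phi^4 + \delta|\nabla_\epsilon z|^2\phi^4 + C\delta^{-\beta}|z|^3\phi^4$ for some explicit $\beta>0$.

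The main technical obstacle is the simultaneous bookkeeping of the three sets of constraints on the Young exponents: (i) the powers of $|X_i^2z|$, $|X_iz|$, $|z|$ must land on $2$, $2$, $3$ respectively, matching the RHS of the lemma; (ii) the $\phi$-exponents must add to $4$ in each split so that each term reproduces $\phi^4$; and (iii) the weights must be arranged so that the higher-order factors receive $\delta$ and the $|z|^3$ factor receives $C\delta^{-\beta}$. Once these are chosen correctly, collecting all contributions, using $|X_iz|\leq |\nabla_\epsilon z|$, $|X_i^2z|\leq|\nabla_\epsilon^2 z|$, $|X_i\phi|\leq|\nabla_\epsilon\phi|$, and summing over $i=1,2$ yields the claim. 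The constant $C_M$ from (\ref{assCM}) and the $L^\infty$-norms of $\phi$ on its compact support contribute only a bounded additive remainder, which accounts for the trailing $+1$ on the right-hand side.
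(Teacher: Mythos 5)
Your proposal is correct and is essentially the paper's own argument: the paper states a general-$p$ interpolation inequality proved ``in the same way as Proposition \ref{PROPinterp1}'' (i.e.\ integration by parts via $X_i^\dag=-X_i-m_i$ followed by weighted Young inequalities) and then sets $p=2$, which is exactly the computation you carry out directly, with the correct exponent bookkeeping. The only cosmetic differences are the power of $\delta^{-1}$ in front of the $|z|^3$ terms (immaterial here, and the paper is equally loose) and the unneeded but harmless ``$+1$''.
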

\begin{proof}
The claim follows by choosing $p=2$ in the following interpolation inequality
\begin{eqnarray*}
\int |\nabla_\epsilon z|^{p+1/3} \phi^{2p} & \leq & C \left(\delta \int |\nabla_\epsilon (|\nabla_\epsilon z|^{p/2})|^2 \phi^{2p} + \delta \int |\nabla_\epsilon z|^p\phi^{2p}\right.\\
&& \left. + \ \frac{1}{\delta} \int |z|^{3p/2}\left( |\nabla_\epsilon \phi|^{3p/2} + \phi^{3p/2}  \right)\phi^{p/2} + 1\right)
\end{eqnarray*}
which can be proved in the same way as Proposition \ref{PROPinterp1}.
\end{proof}

\begin{proof}[Proof of Theorem \ref{THEOmainreg}]
Let us denote $z = X_k u$ and $v = Y u$. Then by Theorem \ref{THEOiterations} i) we have
\begin{eqnarray}\label{finale}
\|z\|_{W^{m,p+1/2}(\Omega)}^{p+1/2} + \|v\|_{W^{m,p+1/2}(\Omega)}^{p+1/2} & \leq & C_4\left(\|z\|_{W^{m-1,4p+2}(\Omega)}^{4p+2} + \|v\|_{W^{m-1,4p+2}(\Omega)}^{4p+2}\right.\nonumber\\
&& \!\!\!\!\!\!\!\!\!\!\!\!\!\!\!\!\!\!\!\!\!\!\!\!\!\!\!\left. + \ \|f_z\|_{W^{m-1,(4p+2)/7}(\Omega)}^{(4p+2)/7} + \|f_v\|_{W^{m-1,(4p+2)/7}(\Omega)}^{(4p+2)/7} \right.\nonumber\\
&& \!\!\!\!\!\!\!\!\!\!\!\!\!\!\!\!\!\!\!\!\!\!\!\!\!\!\!\left. + \ \|u\|_{W^{m,(4p+2)/3}(\Omega)}^{(4p+2)/3} + \|u\|_{W^{m+1,(2p+1)/3}(\Omega)}^{(2p+1)/3} + 1\right)\nonumber\\
& \!\!\!\!\!\!\!\!\!\!\!\!\!\!\!\!\!\!\!\!\!\!\!\!\!\!\!\!\!\!\!\leq & \!\!\!\!\!\!\!\!\!\!\!\!\!\!\!\!\!\!C\left(\|z\|_{W^{m-1,4p+2}(\Omega)}^{4p+2} + \|v\|_{W^{m-1,4p+2}(\Omega)}^{4p+2}\right.\nonumber\\
&&  \!\!\!\!\!\!\!\!\!\!\!\!\!\!\!\!\!\!\!\!\!\!\!\!\!\!\!\left. + \ \|z\|_{W^{m,(2p+1)/3}(\Omega)}^{(2p+1)/3} + \|v\|_{W^{m,(2p+1)/3}(\Omega)}^{(2p+1)/3} + 1\right)
\end{eqnarray}
where the last transition holds due to Corollaries \ref{CORfz} and \ref{CORfv} and H\"older inequality. Let us now rewrite the obtained inequality setting $q = p + 1/2$:
\begin{eqnarray*}
\|z\|_{W^{m,q}(\Omega)}^{q} + \|v\|_{W^{m,q}(\Omega)}^{q} & \leq & C\left(\|z\|_{W^{m-1,4q}(\Omega)}^{4q} + \|v\|_{W^{m-1,4q}(\Omega)}^{4q}\right.\\
&&  \left. + \ \|z\|_{W^{m,\frac{2}{3}q}(\Omega)}^{\frac{2}{3}q} + \|v\|_{W^{m,\frac{2}{3}q}(\Omega)}^{\frac{2}{3}q} + 1\right)
\end{eqnarray*}

By Proposition \ref{PROPfirstiteration} i) and ii) the first two terms at the right hand side are bounded for $m=2$, provided a restriction to a subsets $\Omega_1$ compactly contained in $\Omega$. We want to prove now, by induction, that for all $m \geq 2$
\begin{equation}\label{induction}
\|z\|_{W^{m,q}(\Omega_1)}^{q} + \|v\|_{W^{m,q}(\Omega_1)}^{q} \leq C \left(\|z\|_{W^{m,\frac{2}{3}q}(\Omega_1)}^{\frac{2}{3}q} + \|v\|_{W^{m,\frac{2}{3}q}(\Omega_1)}^{\frac{2}{3}q} + 1\right)\ .
\end{equation}

The induction scheme reads as follows: given that for any $m$ it holds
\begin{equation}\label{scheme1}
\|h\|_{m,q}^q \leq C \left(\|h\|_{m-1,4q}^{4q} + \|h\|_{m,\frac{2}{3}q}^{\frac{2}{3}q} + 1\right)
\end{equation}
and given the starting point $m=2$
$$
\|h\|_{2,q}^q \leq C \left(\|h\|_{2,\frac{2}{3}q}^{2/3q} + 1\right)
$$
we want to prove that if for a fixed $\overline{m}$ we have
\begin{equation}\label{scheme2}
\|h\|_{\overline{m},q}^q \leq C\left(\|h\|_{\overline{m},\frac{2}{3}q}^{\frac{2}{3}q} + 1\right)
\end{equation}
then it holds
$$
\|h\|_{\overline{m}+1,q}^q \leq C\left(\|h\|_{\overline{m}+1,\frac{2}{3}q}^{\frac{2}{3}q} + 1\right)\ .
$$
This is done using (\ref{scheme1})
\begin{displaymath}
\|h\|_{\overline{m}+1,q}^q \leq C\left(\|h\|_{\overline{m},4q}^{4q} + \|h\|_{\overline{m}+1,\frac{2}{3}q}^{\frac{2}{3}q} + 1\right)
\end{displaymath}
and noting that (\ref{scheme2}) implies that $\|h\|_{\overline{m},q}^q$ is bounded for all $q$ by bootstrapping, provided we can get a starting point. This is given by $q = 3$, using Theorem \ref{THEOiterations} ii). Indeed, repeating the argument used for inequality (\ref{finale}) with $p=3$ we get
\begin{eqnarray*}
\|z\|_{W^{m+1,2}(\Omega_1)}^{2} + \|v\|_{W^{m+1,2}(\Omega_1)}^{2}& \leq &  C\left(\|z\|_{W^{m-1,14}(\Omega_1)}^{14} + \|v\|_{W^{m-1,14}(\Omega_1)}^{14}\right.\\
&&  \left. + \ \|z\|_{W^{m,7/3}(\Omega_1)}^{7/3} + \|v\|_{W^{m,7/3}(\Omega_1)}^{7/3} + 1\right)
\end{eqnarray*}
where at the right hand side the first two terms are bounded for $m=2$ by Proposition \ref{PROPfirstiteration}, and the other two terms can be reabsorbed using the interpolation given by Lemma \ref{LEMextra}.
By (\ref{induction}), bootstrapping on $q$ we then obtain the desired result.
\end{proof}

\section{Intrinsic regularity for vanishing viscosity solutions}

This section is devoted to the proof of regularity for vanishing viscosity solutions introduced in Definition (\ref{defvanishingviscosity}), which provides a foliation result for intrinsic minimal graphs, \ie solutions to equation (\ref{equation}). Regularity estimates are obtained by making use of the estimates proved in the previous section, since they were uniform in $\epsilon$. In particular we will obtain H\"older continuity, that in definitive will be sufficient for $\C^\infty$ regularity, in the horizontal direction, while no regularity is expected in the transverse direction, in accordance with the geometry outlined by the foliation.

To come back to the notation of Definition (\ref{defvanishingviscosity}), we will indicate by $u$ a vanishing viscosity solution of (\ref{equation}) and by $(u_n)$ its approximating sequence referring to the vanishing positive real sequence $(\epsilon_n)$. The corresponding nonlinear vector fields will be denoted by $X_{1,n} = \sigma_1^j(x_1,x_2,u_n(x_1,x_2))\partial_{x_j}$, $X_{2,n} = \epsilon_n\sigma_2^j(x_1,x_2,u_n(x_1,x_2))\partial_{x_j}$ and $Y_n = \frac{1}{\epsilon_n} X_{2,n}$. Accordingly, we will use $\nabla_{\epsilon_n}$ and $W^{k,p}_{\epsilon_n}$ for the natural gradient and Sobolev spaces. For the limit equation we will use the notations $X = X_{1,\infty} = \sigma_1^j(x_1,x_2,u(x_1,x_2))\partial_{x_j}$, $\nabla_0 = (X,0)$, $W^{k,p}_0$, while $\nabla_E$ and $W^{k,p}_E$ still stand for the usual Euclidean gradient and Sobolev spaces.

\begin{theo}\label{theoXregularity}
Let $u$ be a vanishing viscosity solution of equation (\ref{equation}). Then
\begin{equation}\label{Xuregularity}
Xu \in W_{E,\textrm{loc}}^{1,p}(\Omega) \ \ \textrm{ for all } p > 1\ .
\end{equation}
\end{theo}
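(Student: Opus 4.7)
The plan is to pass to the weak limit in the uniform Euclidean Sobolev estimates that we can extract from Theorem \ref{THEOmainreg} applied to the viscosity approximating sequence. Let $(\epsilon_n, u_n)$ be the pair associated to $u$ by Definition \ref{defvanishingviscosity}. Since $u_n$ is smooth, the uniform Lipschitz bound $\|u_n\|_{Lip(\Omega)} \leq l$ together with the smoothness and local boundedness of the coefficients $\sigma_i^j$ (along the uniformly bounded graphs $x_3 = u_n(x_1,x_2)$) implies that $u_n$ satisfies assumption (\ref{assumption}) with a constant $M$ independent of $n$. Therefore, for every $\Omega_1 \Subset \Omega$, every integer $m \geq 2$ and every $p \geq 1$, Theorem \ref{THEOmainreg} provides
\begin{equation*}
\|u_n\|_{W^{m,p}_{\epsilon_n}(\Omega_1)} + \|Y_n u_n\|_{W^{m,p}_{\epsilon_n}(\Omega_1)} \leq C
\end{equation*}
with $C$ independent of $n$.

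Next I would show that $w_n := X_{1,n} u_n$ is uniformly bounded in the Euclidean Sobolev space $W^{1,p}_E(\Omega_1)$. One has directly $\|X_{1,n} w_n\|_{L^p} = \|X_{1,n}^2 u_n\|_{L^p} \leq C$; the transverse derivative $Y_n w_n$, however, is not controlled by the intrinsic norm $\|u_n\|_{W^{2,p}_{\epsilon_n}}$ alone, since the latter only gives $X_{2,n} X_{1,n} u_n = \epsilon_n Y_n X_{1,n} u_n$. To recover it I would use the commutator identity
\begin{equation*}
Y_n X_{1,n} u_n = X_{1,n} Y_n u_n + [Y_n, X_{1,n}] u_n,
\end{equation*}
together with $[Y_n, X_{1,n}] = -(\omega^1/\epsilon_n) X_{1,n} - \omega^2 Y_n$, whose coefficients are bounded in $L^\infty$ by $C_M$ thanks to (\ref{assCM}). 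Since $X_{1,n} u_n$ and $Y_n u_n$ are in $L^\infty$ by (\ref{assumption}) and $\|X_{1,n}(Y_n u_n)\|_{L^p} \leq \|Y_n u_n\|_{W^{1,p}_{\epsilon_n}} \leq C$, one obtains $\|Y_n w_n\|_{L^p(\Omega_1)} \leq C$. The rank condition (\ref{rank}), holding uniformly on compacts for the frozen evaluations $\sigma_i^j(x, u_n(x))$, makes $\{X_{1,n}, Y_n\}$ a basis of the Euclidean tangent bundle of $\R^2$ with a transition matrix uniformly invertible in $n$, so $|\nabla_E w_n| \leq C(|X_{1,n} w_n| + |Y_n w_n|)$ and thus $\|w_n\|_{W^{1,p}_E(\Omega_1)} \leq C$ uniformly.

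To conclude, I would extract a subsequence (not relabeled) with $w_n \rightharpoonup w$ in $W^{1,p}_E(\Omega_1)$ and identify $w = Xu$. Writing $w_n = \sigma_1^j(x, u_n(x)) \partial_{x_j} u_n$, the first factor converges uniformly to $\sigma_1^j(x, u(x))$ by uniform convergence of $u_n$ and smoothness of the coefficients, while $\partial_{x_j} u_n$ converges weakly-$*$ in $L^\infty$ to $\partial_{x_j} u$ (since $u_n \to u$ in $\mathcal{D}'$ and $\|u_n\|_{Lip(\Omega)} \leq l$). Hence $w_n \to Xu$ in $\mathcal{D}'(\Omega_1)$, and combined with the weak convergence in $W^{1,p}_E$ this forces $w = Xu$. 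Since this extraction can be repeated for any subsequence with the same limit, the whole sequence converges and the uniform bound passes to the limit, proving $Xu \in W^{1,p}_{E,\mathrm{loc}}(\Omega)$ for every $p > 1$. The main obstacle is the middle step: one must compensate the degeneration $X_{2,n} = \epsilon_n Y_n$ of the intrinsic norm in the transverse direction by combining the separate a priori control on $Y_n u_n$ with the commutator structure encoded in (\ref{assCM}); without this commutator trick the $W^{m,p}_{\epsilon_n}$ estimates on $u_n$ alone would not translate into uniform Euclidean bounds on $\nabla_E X_{1,n} u_n$.
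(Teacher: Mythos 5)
Your proposal is correct and follows essentially the same route as the paper: the paper's proof deduces the uniform bound $\|\nabla_{\epsilon_n}u_n\|_{W_E^{1,p}(B)}\leq C$ directly from Theorem \ref{THEOmainreg} with $m=2$ combined with inequality (\ref{nablaEeps}), and then passes to the weak limit exactly as you do (the limit identification being what Proposition \ref{PROPweakequation} records). Your explicit commutator computation for $Y_n X_{1,n}u_n$ is precisely the mechanism hidden behind (\ref{nablaEeps}) and the bound on $\|Y_nu_n\|_{W^{1,p}_{\epsilon_n}}$, which the paper had already displayed in the derivation of (\ref{nablaE}), so you have simply filled in details the paper leaves implicit.
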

\begin{proof}
To prove (\ref{Xuregularity}) it suffices to obtain the following estimate uniform in $\epsilon$: for every ball $B \subset \subset \Omega$ and all $p > 1$ there exists a positive constant $C$ such that
\begin{equation}\label{W1p}
\|\nabla_{\epsilon_n}u_n\|_{W_E^{1,p}(B)}\leq C
\end{equation}
for all $n$, and this is a direct consequence of (\ref{nablaEeps}) and Theorem \ref{THEOmainreg} with $m = 2$.
\end{proof}

\begin{prop}\label{PROPweakequation}
Let $u$ be a vanishing viscosity solution of equation (\ref{equation}). Then
\begin{equation}\label{xj}
X_{1,n} u_n\rightarrow Xu, \quad X_{2,n} u_n \rightarrow 0
\end{equation}
as $n \rightarrow +\infty$ weakly in $W^{1,2}_{E, \loc}(\Omega)$. Moreover
equation (\ref{equation}) can be represented as $$X^2u =0$$ and is satisfied
 weakly in the Sobolev sense, and hence, pointwise a.e. in $ \Omega$, \ie
$$
\int_\Omega XuX^\dag\phi =0 \ \ \text{ for all }\phi\in \C^{\infty}_0(\Omega)\ .
$$
\end{prop}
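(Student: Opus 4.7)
The plan is to use the uniform Sobolev bounds from Theorem \ref{THEOmainreg}, pass to the limit in the divergence form of the equation, and then exploit the regularity of $Xu$ given by Theorem \ref{theoXregularity} to transform the limiting equation into $X^2u=0$ in the Sobolev sense.

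For the convergence claim (\ref{xj}), Theorem \ref{THEOmainreg} combined with (\ref{nablaEeps}) provides uniform bounds on $\|u_n\|_{W^{2,p}_{E,\loc}}$ and $\|Y_n u_n\|_{W^{1,p}_{E,\loc}}$ for every $p$. Since $X_{2,n}u_n=\epsilon_n Y_n u_n$, the bound $\|X_{2,n}u_n\|_{W^{1,2}_{E,\loc}}\leq \epsilon_n \|Y_n u_n\|_{W^{1,2}_{E,\loc}}$ immediately yields strong (hence weak) convergence to zero in $W^{1,2}_{E,\loc}$. For $X_{1,n}u_n$, Rellich compactness applied to the $W^{2,p}_{E,\loc}$ bound produces strong $L^q_{\loc}$ convergence $\p_{x_j}u_n\to\p_{x_j}u$; together with the uniform convergence $u_n\to u$ assumed in Definition \ref{defvanishingviscosity} and the smoothness of $\sigma_1^j$, this gives $X_{1,n}u_n=\sigma_1^j(\cdot,u_n)\p_{x_j}u_n\to Xu$ strongly in $L^q_{\loc}$, and weak $W^{1,2}_{E,\loc}$ convergence follows by combining this with the uniform $W^{1,2}_E$ bound.

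I would next test the equation $L_{\epsilon_n,u_n}u_n=0$ against $\phi\in\C^\infty_0(\Omega)$ and integrate by parts using (\ref{adjoints}) to obtain
\[
\int \frac{X_{1,n}u_n}{\sqrt{1+|\nabla_{\epsilon_n}u_n|^2}}\,X_{1,n}^\dag\phi + \int \frac{X_{2,n}u_n}{\sqrt{1+|\nabla_{\epsilon_n}u_n|^2}}\,X_{2,n}^\dag\phi = 0.
\]
The second integral vanishes in the limit because $X_{2,n}^\dag\phi=-\epsilon_n Y_n\phi - m_{2,n}\phi$ with $m_{2,n}=O(\epsilon_n)$ by (\ref{assCM}), while the integrand factor is uniformly bounded. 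In the first integral the strong $L^q_{\loc}$ convergences from the previous paragraph (together with $m_{1,n}\to m_1$) imply that $X_{1,n}^\dag\phi\to X^\dag\phi$ and $X_{1,n}u_n/\sqrt{1+|\nabla_{\epsilon_n}u_n|^2}\to Xu/\sqrt{1+(Xu)^2}$ strongly, yielding
\[
\int \frac{Xu}{\sqrt{1+(Xu)^2}}\,X^\dag\phi = 0 \qquad \forall\,\phi\in\C^\infty_0(\Omega).
\]

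The most delicate step is to upgrade this into $\int Xu\,X^\dag\phi=0$. By Theorem \ref{theoXregularity}, $Xu\in W^{1,p}_{E,\loc}$ for every $p$, so $G:=Xu/\sqrt{1+(Xu)^2}$ belongs to $W^{1,p}_{E,\loc}$ and the Sobolev chain rule gives $XG=(1+(Xu)^2)^{-3/2}X^2u$ almost everywhere. Since $G\in W^{1,p}_{E,\loc}$ and $X^\dag=-X-m_1$ with $m_1\in L^\infty_{\loc}$, integration by parts turns the last display into $\int \phi\,(1+(Xu)^2)^{-3/2}X^2u=0$ for every $\phi\in\C^\infty_0(\Omega)$; since the factor $(1+(Xu)^2)^{-3/2}$ is strictly positive, the fundamental lemma forces $X^2u=0$ a.e. Integrating by parts once more, permitted because $Xu\in W^{1,p}_{E,\loc}$, produces the weak formulation $\int Xu\,X^\dag\phi=0$. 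The main obstacle is precisely this chain-rule and integration-by-parts manipulation at the level of $W^{1,p}_{E,\loc}$-regularity along the non-H\"ormander vector field $X$; the high-integrability bounds of $Xu$ coming from Theorem \ref{THEOmainreg} are what make it rigorous.
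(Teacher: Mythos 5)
Your route is genuinely different from the paper's, and its core idea works. The paper passes to the limit in the \emph{nondivergence} form $a_{ij}(\nabla_{\epsilon_n}u_n)X_{i,n}X_{j,n}u_n=0$: the coefficients converge strongly in $L^p$, the second derivatives converge weakly thanks to (\ref{xj}), and the limit equation is already linear, so $X^2u=0$ is obtained in the Sobolev sense with no further manipulation. You instead pass to the limit in the divergence form, arriving at the nonlinear identity $\int \frac{Xu}{\sqrt{1+(Xu)^2}}\,X^\dag\phi=0$, and then use $Xu\in W^{1,p}_{E,\loc}$ from Theorem \ref{theoXregularity}, the Sobolev chain rule and the strict positivity of $(1+(Xu)^2)^{-3/2}$ to recover $X^2u=0$ a.e.\ and hence the weak formulation $\int Xu\,X^\dag\phi=0$. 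This nonlinear-to-linear upgrade is legitimate given Theorem \ref{theoXregularity} (note $\sigma_1^j(\cdot,u)$ is Lipschitz, so your two integrations by parts are justified, with $\p_j(\sigma_1^j(\cdot,u))=m_1$ a.e.\ by Rademacher); the paper's choice of the nondivergence form simply avoids this extra step. Both arguments rest on the same uniform estimates.

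There is, however, a genuine flaw in your justification of (\ref{xj}). Theorem \ref{THEOmainreg} combined with (\ref{nablaEeps}) does \emph{not} give uniform bounds on $\|u_n\|_{W^{2,p}_{E,\loc}}$ or on $\|Y_nu_n\|_{W^{1,p}_{E,\loc}}$: the Euclidean gradient of $Y_nu_n$ (and hence the Euclidean Hessian of $u_n$) contains the term $Y_nY_nu_n$, which Theorem \ref{THEOmainreg} controls only after multiplication by $\epsilon_n$, since it bounds the intrinsic norms $\|Yu\|_{W_\epsilon^{m,p}}$. Such bounds would in fact yield Euclidean regularity of $u$ in the transverse direction, which the paper explicitly excludes. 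Consequently your claims that $\p_{x_j}u_n\to\p_{x_j}u$ strongly in $L^q_{\loc}$ and that $X_{2,n}u_n\to 0$ strongly in $W^{1,2}_{E,\loc}$ (via $\epsilon_n\|Y_nu_n\|_{W^{1,2}_{E,\loc}}$, which may only be $O(1)$) are unjustified.

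The fix is to use the estimate the paper actually provides, namely (\ref{W1p}): $\|\nabla_{\epsilon_n}u_n\|_{W_E^{1,p}(B)}\leq C$ uniformly. Apply Rellich to $X_{1,n}u_n$ itself, not to $\p_{x_j}u_n$, to get strong $L^q_{\loc}$ convergence along a subsequence, and identify the limit as $Xu$ using the uniform Lipschitz bound (which gives $\p_{x_j}u_n\rightharpoonup\p_{x_j}u$ weak-$*$ in $L^\infty$) together with the uniform convergence of $\sigma_1^j(\cdot,u_n)$. For the second component, $X_{2,n}u_n=\epsilon_nY_nu_n\to 0$ uniformly by (\ref{assumption}), and being bounded in $W^{1,p}_E$ by (\ref{W1p}) it converges to $0$ weakly in $W^{1,2}_{E,\loc}$, which is all the proposition asserts and all your limit passage uses (there you only need uniform boundedness of $X_{2,n}u_n/\sqrt{1+|\nabla_{\epsilon_n}u_n|^2}$ and $X_{2,n}^\dag\phi=O(\epsilon_n)$). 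Similarly $m_{1,n}\to m_1$ only weak-$*$ in $L^\infty$, which suffices since the factor it multiplies converges strongly in $L^1_{\loc}$. With these corrections your argument goes through.
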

\begin{proof}
Since the approximating solution $u_n$ is of class $\C^\infty$, we can use the nondivergence form of equation (\ref{equation})
$$
\sum_{i,j=1}^{2} a_{ij}(\nabla_{\e_n} u_n) X_{i,n}X_{j,n}u_n=0.
$$
Here
$$
a_{ij}(\nabla_{\e_n} u_n) \rightarrow a_{ij}(\nabla_0 u) = \delta_{i 1} \delta_{j1} \text{  in  } L^p\ ,
$$
while (\ref{W1p}) and the definition of vanishing viscosity solution imply
\begin{displaymath}
X_{1,n} u_n\rightarrow Xu, \quad X_{2,n} u_n  \rightarrow 0
\end{displaymath}
as $n \rightarrow +\infty$ weakly in $W^{1,2}_{E,loc}(\Omega)$.
Hence letting $n$ go to $\infty$ in the nondivergence form equation we conclude
$$
X^2u =0
$$
in the Sobolev sense.
\end{proof}

\begin{proof}[Proof of Theorem \ref{theoCregularity}]
By Theorem \ref{THEOmainreg} with $m = 3$ and inequality (\ref{nablaEeps}) we have that $X^2 u \in
W_{E,\textrm{loc}}^{1,p}(\Omega)$ for all $p > 1$. 
Then, by Sobolev-Morrey embedding in $\R^2$, $X^2 u$ is H\"older continuous, and
this implies that the equation $X^2u = 0$, satisfied weakly by the previous proposition, is satisfied pointwise
everywhere in $\Omega$.
\end{proof}

We can now give a new pointwise definition of derivative in the
direction of vector fields $X_1$  and $X_2$.

\begin{defi}\label{deriv}
Let $V$ be a Lipschitz vector field on $\Omega$ and let  $\xi_0\in \Omega$ and $\gamma(s)$ be a solution to
problem $\gamma' = V(\gamma) \, , \, \gamma(0)=\xi_0$. We say that a function $f\in \C^{\alpha}_{loc}(\Omega),$ with $\alpha
\in ]0,1[,$ has Lie derivative in the direction of the vector
field $V$ in $\xi_0$ if there exists
$$
\frac{d}{ds}(f\circ\gamma)\Big|_{s=0},
$$
and we will denote its value by
$Vf(\xi_0).$
\end{defi}

\noindent If the weak derivative of a function $f$ is sufficiently regular,
then the two notions of derivatives coincide. For the proof of the following
result see \cite[Proposition 5.2]{CM}.

\begin{prop} \label{lieweak}
If $f\in \C_{loc}^{\alpha}(\Omega)$ for some $\alpha\in ]0,1[$ and
its weak derivatives satisfy $X_{1,u}f\in \C_{loc}^{\alpha}(\Omega),
Y_uf\in L^p_{loc}(\Omega)$ with $p>1/\alpha,$ then for all
$\xi \in \Omega$ the Lie derivative $X_{1,u}f(\xi)$ exists and coincides
with the weak one.
\end{prop}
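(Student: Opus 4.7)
The plan is to regularize $f$ by Euclidean mollification and pass to the limit, using the H\"older regularity of $f$ and $X_{1,u}f$ to prove a uniform commutator estimate with the mollifier. Fix $\xi_0\in\Omega$ and let $\gamma\colon[0,s_0]\to\Omega$ be the integral curve of the Lipschitz vector field $X_{1,u}$ starting at $\xi_0$, with image contained in some compact $K\subset\Omega$. Let $\rho_\e(z)=\e^{-2}\rho(z/\e)$ be a standard mollifier on $\R^2$ and set $f_\e=f\ast\rho_\e$. Since $f_\e$ is smooth, the classical chain rule along $\gamma$ gives
$$
f_\e(\gamma(s))-f_\e(\xi_0)=\int_0^s X_{1,u}f_\e(\gamma(t))\,dt,\qquad s\in[0,s_0].
$$
The $\C^\alpha$ regularity of $f$ gives $\|f_\e-f\|_{L^\infty(K)}\le C\e^\alpha$, so the left hand side converges to $f(\gamma(s))-f(\xi_0)$ as $\e\to 0$. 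The heart of the proof is to show $X_{1,u}f_\e\to X_{1,u}f$ uniformly on $K$, after which passing to the limit yields $f(\gamma(s))-f(\xi_0)=\int_0^s X_{1,u}f(\gamma(t))\,dt$, and the continuity of $X_{1,u}f$ at $\xi_0$ combined with the fundamental theorem of calculus produces the Lie derivative.

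For the uniform convergence I would split
$$
X_{1,u}f_\e-X_{1,u}f=[X_{1,u},\ast\rho_\e]f+\bigl((X_{1,u}f)\ast\rho_\e-X_{1,u}f\bigr).
$$
The second piece is $O(\e^\alpha)$ in $L^\infty(K)$ since $X_{1,u}f\in\C^\alpha_{\loc}$. For the commutator, the hypothesis $Y_uf\in L^p_{\loc}$ with $p>1/\alpha$ combined with the rank condition~(\ref{rank}) guarantees $f\in W^{1,p}_{\loc}(\Omega)$ in the Euclidean sense, so the weak identity $X_{1,u}f=\sigma_1^j(\cdot,u)\p_{x_j}f$ makes classical sense. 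Using the adjoint formula~(\ref{adjoints}) a direct computation produces
\begin{eqnarray*}
[X_{1,u},\ast\rho_\e]f(x)&=&\int f(y)\bigl[\sigma_1^j(x,u(x))-\sigma_1^j(y,u(y))\bigr]\p_{x_j}\rho_\e(x-y)\,dy\\
&&+\int f(y)\,m_1(y)\,\rho_\e(x-y)\,dy.
\end{eqnarray*}

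Writing $f(y)=f(x)+(f(y)-f(x))$ in the first integral and integrating the $f(x)$--piece by parts in $y$ forces an exact cancellation with the $f(x)(m_1\ast\rho_\e)(x)$ contribution of the second integral, leaving
$$
[X_{1,u},\ast\rho_\e]f(x)=\int m_1(y)\bigl(f(y)-f(x)\bigr)\rho_\e(x-y)\,dy+R_\e(x),
$$
where $R_\e(x)$ is the residual $\int\bigl(f(y)-f(x)\bigr)\bigl[\sigma_1^j(x,u(x))-\sigma_1^j(y,u(y))\bigr]\p_{x_j}\rho_\e(x-y)\,dy$. Using $\|m_1\|_{L^\infty}\le C_M$ from~(\ref{assCM}), the H\"older bound $|f(y)-f(x)|\le\|f\|_{\C^\alpha}|x-y|^\alpha$, the Lipschitz bound $|\sigma_1^j(x,u(x))-\sigma_1^j(y,u(y))|\le C|x-y|$ (valid since $u$ is Lipschitz) and the pointwise estimate $|\p\rho_\e|\le C\e^{-3}$ on $\{|x-y|\le\e\}$ together with the area $\e^2$ of the support, both pieces are controlled by $C\e^\alpha$ uniformly in $x\in K$.

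The main obstacle is precisely this commutator estimate. The coefficients $\sigma_1^j(\cdot,u)$ are merely Lipschitz, and they pair with first derivatives of the mollifier that blow up like $\e^{-3}$ in two dimensions, so a crude size estimate on the commutator gives no smallness whatsoever. It is the $\C^\alpha$ cancellation $|f(y)-f(x)|=O(\e^\alpha)$ on the support of $\rho_\e(x-\cdot)$, paired with the algebraic identity that makes the two divergence--type contributions $-f(x)(m_1\ast\rho_\e)(x)$ and $(m_1 f)\ast\rho_\e(x)$ cancel up to a $\C^\alpha$--controlled remainder, that delivers the decisive $\e^\alpha$ rate. Without either the H\"older regularity of $f$ or the structural cancellation induced by integration by parts, the argument would collapse.
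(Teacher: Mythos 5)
Your argument is correct, but note that the paper does not actually prove Proposition \ref{lieweak}: it defers entirely to \cite[Proposition 5.2]{CM}, so what you have written is a self-contained alternative rather than a reconstruction of the paper's route. Your Friedrichs-type commutator scheme is sound: testing the weak derivative against $y\mapsto\rho_\e(x-y)$ through the adjoint (\ref{adjoints}) gives exactly your commutator identity, the two $f(x)(m_1\ast\rho_\e)(x)$ contributions do cancel identically, and the remaining terms are $O(\e^\alpha)$ uniformly on compacts because $f\in\C^{\alpha}_{\loc}$, $\sigma_1^j(\cdot,u)$ is Lipschitz and $m_1\in L^\infty$ by (\ref{assumption})--(\ref{assCM}); passing to the limit in $f_\e(\gamma(s))-f_\e(\xi_0)=\int_0^s X_{1,u}f_\e(\gamma(t))\,dt$ and using the continuity of $X_{1,u}f$ then yields the Lie derivative (run $\gamma$ also for $s<0$ to get the two-sided derivative required by Definition \ref{deriv}). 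The interesting difference from the cited source is that your proof leans crucially on the Lipschitz regularity of the coefficients (guaranteed here since $u$ is Lipschitz, and consistent with Definition \ref{deriv}), and in exchange it never really uses the transverse hypothesis $Y_uf\in L^p_{\loc}$, $p>1/\alpha$, which is what the argument of \cite{CM} is built around in a lower-regularity setting; so in the present context you are in fact proving a slightly stronger statement. One small cleanup: the only place you invoke $Y_uf$ is to claim $f\in W^{1,p}_{\loc}$ so that $X_{1,u}f=\sigma_1^j(\cdot,u)\p_{x_j}f$ classically, and as stated this step is mildly incomplete (recovering the Euclidean weak gradient from the two vector-field weak derivatives needs an approximation argument, because the inverse matrix of the $\sigma$'s is only Lipschitz while the defining duality is against $\C^{\infty}_0(\Omega)$ test functions); it is cleaner, and entirely sufficient, to apply the adjoint-based definition of the weak derivative directly to the mollifier kernel, which produces your commutator formula verbatim and removes any need for that detour.
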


We are now ready to prove the main result concerning the foliation

\begin{proof}[Proof of corollary \ref{CORfoliation}]
By Theorem \ref{theoCregularity} we have $X^2u = 0$, and the previous proposition ensures that $X (Xu)$ is a Lie derivative, so by Definition \ref{deriv} we have
$$
\frac{d^2}{ds^2}u(\gamma(s)) = X^2u = 0\, .
$$
\end{proof}

\bibliographystyle{model1-num-names}

\begin{thebibliography}{99}

\bibitem{ASCV} L. Ambrosio, F. Serra Cassano, D. Vittone, Intrinsic regular hyper\-surfaces in Heisenberg groups. J. Geom. Anal. 16 (2006), 187-232.

\bibitem{bascv} V. Barone Adesi, F. Serra Cassano, D. Vittone, The Bernstein problem for intrinsic graphs in Heisenberg groups and calibrations, Calc. Var. Partial Differential Equations 30 (2007), 17-49.

\bibitem{Bellaiche} Sub-Riemannian geometry, A. Bella\"iche, J.-J. Risler Ed., Birkh\"auser-Verlag (1996)

\bibitem{Bigolinserracassano} F. Bigolin, F. Serra Cassano, Distributional
solutions of Burgers' equation and intrinsic regular graphs in Heisenberg groups (2009), to appear on J. Math. Anal. Appl.

\bibitem{BLU} A. Bonfiglioli, E. Lanconelli and F. Uguzzoni, Statified Lie Groups and potential theory for their sub-Laplacians, Springer, 2007.

\bibitem{fsc-bigolin} F. Bigolin, F. Serra Cassano, Intrinsic regular graphs in Heisenberg groups vs. weak solutions of non linear first-order PDEs (2009), to appear on Adv. Calc. Var.

\bibitem{CCM1} L. Capogna, G.Citti, M. Manfredini, Regularity of non-characteristic minimal graphs in the Heisenberg group $\Heis^1$ (2008), to appear on Indiana Univ. Math. J.

\bibitem{CCMn} L. Capogna, G. Citti, M. Manfredini, Smoothness  of Lipschitz intrinsic minimal graphs in the Heisenberg group $\Heis^n, n>1$ (2008), to appear on Crelle's Journal.

\bibitem{CDG} L. Capogna, D. Danielli, N. Garofalo, An isoperimetric inequality and the Sobolev embedding theorem for vector fields. Math. Res. Lett. 1 (1994), 263-268.

\bibitem{cdpt:survey} L. Capogna, D. Danielli, S. Pauls, J. Tyson, An introduction to the Heisenberg group and the sub-Riemannian isoperimetric problem, Progress in Math. {259}, {Birkh\"auser Verlag}, {Basel}, 2007.

\bibitem{CHMY} J.H. Cheng, J.F. Hwang, A. Malchiodi, P. Yang, Minimal surfaces in pseudohermitian geometry. Ann. Sc. Norm. Super. Pisa Cl. Sci. (5) 1 (2005), 129-177.

\bibitem{ChengHwangYang} J.H. Cheng, J.F. Hwang, P. Yang, Regularity of $\C^1$
smooth surfaces with prescribed $p$-mean curvature in the Heisenberg
group, Math. Ann. 344 (2009), 1-35.

\bibitem{chy}  J.H. Cheng, J.F. Hwang, P.Yang, Existence and uniqueness for $p$-area minimizers in the Heisenberg group, Math. Ann. 337 (2007) 253--293.

\bibitem{CMa} G. Citti, M. Manfredini, Uniform estimates of the fundamental solution for a family of hypoelliptic operators. Potential Anal. 25 (2006), 147-164.

\bibitem{Implicit} G. Citti, M. Manfredini, Implicit function theorem in Carnot-Carath\'eodory spaces, Commun. Contemp. Math. 8 (2006), 657-680.

\bibitem{CM} G. Citti, A. Montanari, Analytic estimates for solutions of the Levi equation, J. Differential Equations 173 (2001), 356-389.

\bibitem{CiPaPo} G. Citti, A. Pascucci, S. Polidoro, On the regularity of solutions to a nonlinear ultraparabolic equation arising in mathematical finance, Differential Integral Equations 14 (2001), 701-738.

\bibitem{CS} G. Citti, A. Sarti, A cortical based model of perceptual completion in the roto-translation space, J. Math. Imag. Vis. 24 (2004), 307-326.

\bibitem{dgn:minimal} D. Danielli, N. Garofalo, D.M. Nhieu, Sub-Riemannian calculus on hypersurfaces in {C}arnot groups, Adv. Math. 215 (2007), 292-378.

\bibitem{dgn2} D. Danielli, N. Garofalo, D.M. Nhieu, A notable family of entire intrinsic minimal graphs in the Heisenberg group which are not perimeter minimizing, Amer. J. Math. 130 (2008), 317-339.

\bibitem{FSSC} B. Franchi, R. Serapioni, F. Serra Cassano, Regular hypersurfaces, intrinsic perimeter
and implicit function theorem in Carnot groups, Comm. Anal. Geom. 11 (2003) 909-944.

\bibitem{FSSC4} B. Franchi, R. Serapioni and F. Serra Cassano, On the
structure of finite perimeter sets in step 2 Carnot groups, J. Geom.
Anal. 13 (2003), no. 3, 421-466.

\bibitem{FSSC3} B. Franchi, R. Serapioni, F. Serra Cassano, Intrinsic Lipshitz graphs in Heisenberg groups, J.
Nonlinear Convex Anal. (2006), 423-441.

\bibitem{FSSC2} B. Franchi, R. Serapioni, F. Serra Cassano (Submitted Paper),
Differentiability of intrinsic Lipschitz functions within Heisenberg groups (2008)

\bibitem{GN} N. Garofalo, D.N. Nhieu, Isoperimetric and Sobolev inequalities for CC spaces and the existence of minimal surfaces, Comm. Pure Appl. Math. 49 (1996), 1081-1144.

\bibitem{HK} P. Hailasz, P. Koskela, Sobolev met Poincar\'e, Mem. Amer. Math. Soc. 145 (2000)

\bibitem{HP2008} R.K. Hladky, S.D. Pauls, Constant mean curvature surfaces in sub-Riemannian geometry, J. Differential Geom. 79 (2008), 111-139.

\bibitem{HP2009} R.K. Hladky, S.D. Pauls, Minimal surfaces in the roto-translation group with applications to a neuro-biological image completion model, J. Math. Imag. Vis. 36 (2009), 1-27.

\bibitem{hormander} L. H\"ormander, {Hypoelliptic second order differential equations}, Acta Math. 119 (1967), 147-171.

\bibitem{J} D. Jerison, The Poincar\'e inequality for vector fields satisfying H\"ormander's condition. Duke Math. J. 53 (1986), 503–523.

\bibitem{KS} B. Kirchheim, F. Serra Cassano, Rectifiability and parametrization of intrinsic regular
surfaces in the Heisenberg group, Ann. Sc. Norm. Super. Pisa Cl. Sci. (5) 3 (2004), 871-896.

\bibitem{Malchiodi} A. Malchiodi, Minimal surfaces in three dimensional pseudo-hermitian manifolds, Lect. Notes Semin. Interdiscip. Mat. Potenza 6 (2007), 195--207.

\bibitem{montefalcone} F. Montefalcone, Hypersurfaces and variational formulas in sub-Riemannian Carnot groups,  J. Math. Pures Appl. 87 (2007), 453-494.

\bibitem{pau:obstructions} S.D. Pauls, $H$-minimal graphs of low regularity in $\Heis^1$, Comment. Math. Helv. 81 (2006) 337-381.

\bibitem{pau:minimal} S.D. Pauls, Minimal surfaces in the Heisenberg group, Geom. Dedicata 104 (2004), 201-231.

\bibitem{R} M. Ritor\'e, Examples of area-minimizing surfaces in the sub-Riemannian Heisenberg group $\Heis^1$ with low regularity, Calc. Var. Partial Differential Equations 34 (2009), 179-192.

\bibitem{RR1} M. Ritor\'e, C. Rosales, Rotationally invariant hypersurfaces with constant mean curvature in the Heisenberg group $\Heis^n$, J. Geom. Anal. 16 (2006), 703-720.

\bibitem{RR2} M. Ritor\'e, C. Rosales, Area-stationary surfaces in the
Heisenberg group $\Heis^1$, Adv. Math. 219 (2008) 633-671.

\bibitem{Roth:Stein} L.P. Rothschild, E.M. Stein, Hypoelliptic differential operators and nilpotent groups, Acta Math. 137 (1976), 247-320.


\bibitem{Shcherbakova} N. Shcherbakova (preprint), Minimal surfaces in contact subriemannian manifolds and structure of their singular sets in the $(2,3)$ case (2006).

\bibitem{VSC} N.Th. Varopoulos, L. Saloff-Coste and T. Coulhon, Analysis and geometry on groups, Cambridge University Press, 1992.

\bibitem{Wheeden} E. T. Sawyer, R. L. Wheeden, H\"older continuity of weak solutions to subelliptic equations with rough coefficients, Mem. Amer. Math. Soc. 180 (2006) no. 847, x+157

\end{thebibliography}

\end{document}